\def\squareforqed{\hbox{\rlap{$\sqcap$}$\sqcup$}}
\def\qed{\ifmmode\squareforqed\else{\unskip\nobreak\hfil
\penalty50\hskip1em\null\nobreak\hfil\squareforqed%
\parfillskip=0pt\finalhyphendemerits=0\endgraf}\fi}
\def\cal#1{{\mathcal{#1}}}
\def\bsigma#1{\mathbf{\Sigma}^0_{#1}}
\def\bpi#1{\mathbf{\Pi}^0_{#1}}
\def\Bold2{{\bf 2}}
\def\calA{{\mathcal{A}}}
\def\calB{{\mathcal{B}}}
\def\calD{{\mathcal{D}}}
\def\calF{{\mathcal{F}}}
\def\calH{{\mathcal{H}}}
\def\calI{{\mathcal{I}}}
\def\calK{{\mathcal{K}}}
\def\calP{{\mathcal{P}}}
\def\calS{{\mathcal{S}}}
\def\calU{{\mathcal{U}}}
\def\PL{{\mathrm{\mathbf{A}}}}
\def\PU{{\mathrm{\mathbf{K}}}}
\def\PO{{\mathrm{\mathbf{O}}}}
\def\PV{{\mathrm{\mathbf{L}}}}
\def\frakA{{\mathfrak{A}}}
\def\frakK{{\mathfrak{K}}}
\def\frakU{{\mathfrak{U}}}
\newcommand{\uparw}{{\uparrow}}
\newcommand{\dnarw}{{\downarrow}}
\newcommand{\triangleup}{{\vartriangle}}
\def\akka{{\sigma}}
\def\kaak{{\tau}}
\def\kaoo{{\phi}}
\def\ooka{{\psi}}
\def\aook{{\alpha}}
\def\okao{{\beta}}
\def\kooa{{\gamma}}
\def\oako{{\delta}}
\begin{document}

\title{On the commutativity of the powerspace constructions}

\author[M. de Brecht]{Matthew de Brecht\rsuper{a}}
\address{\lsuper{a}Graduate School of Human and Environmental Studies, Kyoto University, Japan}
\email{matthew@i.h.kyoto-u.ac.jp}

\author[T. Kawai]{Tatsuji Kawai\rsuper{b}}
\address{\lsuper{b}Japan Advanced Institute of Science and Technology, Japan}
\email{tatsuji.kawai@jaist.ac.jp}

\begin{abstract}
We investigate powerspace constructions on topological spaces, with a particular focus on the category of quasi-Polish spaces. We show that the upper and lower powerspaces commute on all quasi-Polish spaces, and show more generally that this commutativity is equivalent to the topological property of consonance. We then investigate powerspace constructions on the open set lattices of quasi-Polish spaces, and provide a complete characterization of how the upper and lower powerspaces distribute over the open set lattice construction.
\end{abstract}

\keywords{powerspace, quasi-Polish space, Vietoris topology, consonant space, topology, domain theory, locale theory}

\dedicatory{This paper is dedicated to the memory of Klaus Keimel.}

\maketitle

\section{Introduction}

Given a topological space $X$, the \emph{lower powerspace} $\PL(X)$ is the set of closed subsets of $X$ with the lower Vietoris topology, the \emph{upper powerspace} $\PU(X)$ is the set of (saturated) compact subsets of $X$ with the upper Vietoris topology, and the \emph{convex} (or \emph{Vietoris}) \emph{powerspace} $\PV(X)$ is the set of lenses in $X$ with the Vietoris topology.

If $X$ is Hausdorff, then the notion of lens and compact subset become equivalent, and $\PV(X)$ is the well-known hyperspace of compact subsets often used by topologists (see Section 4.F of~\cite{ke95}). This is also the same Vietoris construction used in~\cite{KKV04}  (but restricted to Stone spaces) to provide coalgebraic semantics for modal logics. $\PL(X)$ and $\PU(X)$ are generally non-Hausdorff and might seem less familiar to some topologists, but $\PL(X)$ should be easily recognized as the topological space underlying the well-known Effros Borel space (see Section 12.C in~\cite{ke95}), at least when $X$ is countably based.

In domain theory, the upper and lower powerdomains (see Section IV-8 of~\cite{g03}) coincide with the upper and lower powerspaces for continuous dcpos with the Scott topology, although the constructions differ in general for non-continuous dcpos. Under some additional mild assumptions, the Plotkin powerdomain and convex powerspace are also known to coincide. Powerdomains are often used in theoretical computer science to model the semantics of non-deterministic programs.

In this paper, we investigate the powerspace constructions with a focus on the category of quasi-Polish spaces~\cite{dbr}, which is general enough to contain all Polish spaces and all $\omega$-continuous domains. We show that the powerspace constructions preserve the property of being quasi-Polish, and as a result we can view the constructions as monads on the category of quasi-Polish spaces. We will also provide topological characterizations of when the powerspace constructions commute, and investigate the interaction between the powerspace constructions and the contravariant endofunctor $\PO$ which maps a space $X$ to its lattice of open subsets $\PO(X)$ equipped with the Scott-topology.

The locale theoretic versions of the powerspace constructions are known as \emph{powerlocales} and have a very rich theory~\cite{J85,V89,JV91,schalk:phd,V04,VT04}. Although the powerspace and powerlocale constructions behave differently in general, some of the more notable differences (such as the failure of the commutativity of upper and lower powerspaces) disappear when one restricts to powerspace constructions on quasi-Polish spaces. As we will further explain below, several of our topological results for the powerspaces on quasi-Polish spaces correspond to well-known results for powerlocales, and in some cases our results can be easily obtained using known results and techniques from locale theory. However, we expect that the topological proofs provided here will be more accessible to those who are unfamiliar with locale theory, even though they come at the expense of using classical logic.

In Sections~\ref{sec:lowerpowerspace} through~\ref{sec:convexpowerspace} we show that the lower, upper, and convex powerspace constructions preserve the property of being quasi-Polish. Although we use straightforward topological proofs, the preservation results for the lower and upper powerspaces can also be obtained using known results in locale theory. R.~Heckmann showed in~\cite{H15} that every countably presented locale is spatial and that the corresponding spaces are precisely the quasi-Polish spaces. Furthermore, results by S.~Vickers showing the geometricity of the lower and upper powerlocales (see Theorems 7.5 and 8.8 in~\cite{V04}) demonstrate that the powerlocale constructions preserve the property of having a countable presentation. Finally, it is known (see Propositions 6.26 and 7.39 in~\cite{schalk:phd}) that if $X$ is a sober space, then the lower (or upper) powerspace of $X$ is homeomorphic to the space of points of the corresponding powerlocale. Combining these results provides a locale theoretic proof that the lower and upper powerspace of a quasi-Polish space is quasi-Polish.

In Section~\ref{sec:consonance}, we will show that a topological property known as \emph{consonance} (see~\cite{dgl95,NS96,cw98,bo99}) is equivalent to the commutativity of the upper and lower powerspaces in the sense that $\PL(\PU(X)) \cong \PU(\PL(X))$ under a naturally defined homeomorphism. Analogous commutativity results are known in both domain theory and locale theory. In domain theory, K.E.~Flannery and J.J.~Martin showed in~\cite{FM90} that the upper and lower powerdomain constructions commute for all bounded complete algebraic dcpos. R.~Heckmann~\cite{H91} later extended this result to all dcpos. In~\cite{H92}, R.~Heckmann proved a commutativity result for the lower powerdomain and a modified version of the upper powerdomain using topological methods that are more closely related to the approach we take here. In locale theory, P.T.~Johnstone and S.~Vickers showed in~\cite{JV91} that the upper and lower powerlocales commute for all locales. It can be shown that the upper powerlocale of a non-consonant space is not spatial, which explains why the powerlocales can commute even in the cases where the powerspaces do not.

Every quasi-Polish space is consonant, and the functions in Definition~\ref{def:AK_KA} can be interpreted as distributive laws (in the sense of Beck~\cite{be69}) between the upper and lower powerspace monads on the category of quasi-Polish spaces.  The locale theoretic version of this observation has already been made by S.~Vickers in~\cite{V04}. At the time of this writing we have been unable to verify whether the upper and lower powerspace constructions preserve consonance, which prevents us from extending this result to the more general category of consonant spaces.

Section~\ref{sec:consonance} also provides connections between the composition of the upper and lower powerspace constructions with the double powerspace construction. Given a topological space $X$, we define the double powerspace of $X$ to be $\PO(\PO(X))$. In general, the contravariant endofunctor $\PO$ does not preserve the property of being quasi-Polish. However, the double powerspace construction $\PO\circ\PO$ does restrict to a covariant endofunctor on the category of quasi-Polish spaces, and is naturally isomorphic to the composition of the upper power space monad with the lower powerspace monad (or vice versa) in this case. These results are analogous to the properties of the lower, upper and double powerlocales investigated in~\cite{V04,VT04}, and show that consonance is a key property needed for the spatial and localic theories of the power constructions to agree.

If $X$ is quasi-Polish then $\PO(X)$ is quasi-Polish if and only if $X$ is locally compact. If $X$ is quasi-Polish but not locally compact, such as the Baire space, then $\PO(X)$ will not even be countably based. From R.~Heckmann's results we know that spaces of the form $\PO(X)$ for quasi-Polish $X$ are precisely the countably presented frames equipped with the Scott-topology. We investigate the powerspace constructions on these spaces in Sections~\ref{sec:AO_OK} and~\ref{sec:KO_OA} and show that $\PL(\PO(X))\cong \PO(\PU(X))$ and $\PU(\PO(X))\cong \PO(\PL(X))$ under natural homeomorphisms whenever $X$ is quasi-Polish. Together, these results provide a complete picture of how $\PL$, $\PU$, and $\PO$ interact on quasi-Polish spaces (see a summary in Table~\ref{tab:summary} below). E.~Neumann~\cite{N18} has recently shown that computable generalizations of these results hold within the framework of computable analysis.\footnote{Note that the powerspaces used in computable analysis have the \emph{sequentialization} of the upper and lower Vietoris topologies used in this paper, which is a subtle but relevant detail when dealing with non-countably based spaces such as $\PO(X)$ for non-locally compact $X$. However, the homeomorphisms we prove in Theorems~\ref{thrm:AO_OK_homeomorphic} and~\ref{thrm:KO_OA_homeomorphic} imply (using Proposition~2.2(6) of~\cite{Sch:HMT}) that the upper and lower Vietoris topologies on the powerspaces of $\PO(X)$ are already sequential when $X$ is quasi-Polish. }



\begin{table}%
\label{tab:summary}
\begin{tabular}{rl}
\multicolumn{2}{c}{$\PL(\PU(X)) \cong \PU(\PL(X))$}\\
\multicolumn{2}{c}{(see Definition~\ref{def:AK_KA} and Theorem~\ref{thrm:cons_AK_KA_homeo})}\\
&\\
\quad$\akka_X\colon \PL(\PU(X)) \to \PU(\PL(X))$; & \quad$\akka_X(\calA)= \{ A\in \PL(X) \,|\, (\forall K\in \calA)\, A\cap K\not=\emptyset\}$,\\[.25em]
&\quad$\akka_X^{-1}(\Box\Diamond U) = \Diamond\Box U$ (for $U\in\PO(X)$)\\[.5em]
$\kaak_X\colon \PU(\PL(X)) \to \PL(\PU(X))$; & \quad$\kaak_X(\calK)= \{ K\in \PU(X) \,|\, (\forall A\in \calK)\, A\cap K\not=\emptyset\}$,\\[.25em]
&\quad$\kaak_X^{-1}(\Diamond\Box U) = \Box\Diamond U$ (for $U\in\PO(X)$)\\
&\\
\midrule
&\\
\multicolumn{2}{c}{$\PU(\PL(X)) \cong \PO(\PO(X))$}\\
\multicolumn{2}{c}{(see Definition~\ref{def:KAOO} and Theorem~\ref{thrm:scott_top_basis_for_OOX})}\\
&\\
$\kaoo_X\colon \PU(\PL(X)) \to \PO(\PO(X))$; &\quad $\kaoo_X(\calK)= \{U\in\PO(X) \,|\, \calK\in\Box\Diamond U\}$,\\[.25em]
&\quad$\kaoo_X^{-1}(\boxtimes U) = \Box\Diamond U$ (for $U\in\PO(X)$)\\[.5em]
$\ooka_X\colon \PO(\PO(X)) \to \PU(\PL(X))$; & \quad$\ooka_X(\calH)= {\bigcap}_{U\in\calH}\Diamond U$,\\[.25em]
&\quad$\ooka_X^{-1}(\Box\Diamond U) = \boxtimes U$ (for $U\in\PO(X)$)\\[.5em]
&\\
\midrule
&\\
\multicolumn{2}{c}{$\PL(\PO(X)) \cong \PO(\PU(X))$}\\
\multicolumn{2}{c}{(see Definition~\ref{def:AO_OK} and Theorem~\ref{thrm:AO_OK_homeomorphic})}\\
&\\
$\aook_X\colon \PL(\PO(X))\to\PO(\PU(X))$; & \quad $\aook_X(\frakA)= {\bigcup}_{U\in\frakA}\square U$,\\[.25em]
&\quad$\aook_X^{-1}(\triangleup \calA) = \Diamond\kaoo_X(\akka_X(\calA))$ (for $\calA \in \PL(\PU(X))$)\\[.5em]
$\okao_X\colon \PO(\PU(X))\to\PL(\PO(X))$; & \quad $\okao_X(\calU)= \{U\in\PO(X) \mid \square U \subseteq \calU \}$,\\[.25em]
&\quad$\okao^{-1}_X(\Diamond \calH) = \triangleup \kaak_X(\ooka_X(\calH))$ (for $\calH\in\PO(\PO(X))$)\\[.5em]
&\\
\midrule
&\\
\multicolumn{2}{c}{$\PU(\PO(X)) \cong \PO(\PL(X))$}\\
\multicolumn{2}{c}{(see Definition~\ref{def:KO_OA} and Theorem~\ref{thrm:KO_OA_homeomorphic})}\\
&\\
$\kooa_X\colon \PU(\PO(X))\to\PO(\PL(X))$; &\quad $\kooa_X(\frakK)= {\bigcap}_{U\in\frakK} \Diamond U$,\\[.25em]
&\quad$\kooa^{-1}_X(\triangledown\calK) = \Box\kaoo_X(\calK)$ (for $\calK\in\PU(\PL(X))$)\\[.5em]
$\oako_X\colon \PO(\PL(X)) \to \PU(\PO(X))$; &\quad $\oako_X(\calU)= \{U \in\PO(X) \mid \calU \subseteq \Diamond U\}$,\\[.25em]
&\quad$\oako^{-1}_X(\Box\calH) = \triangledown\ooka_X(\calH)$ (for $\calH\in\PO(\PO(X))$)\\[.5em]
&\\
\end{tabular}
\caption{A summary of the homeomorphisms investigated in this paper. All of these results are valid when $X$ is quasi-Polish. The results concerning $\PL(\PU(X)) \cong \PU(\PL(X))$ are valid when $X$ is consonant. The results for $\PU(\PL(X)) \cong \PO(\PO(X))$ are valid when $X$ is countably based.}
\end{table}


\section{Preliminaries}

A topological space is \emph{quasi-Polish} if and only if it is countably based and the topology is induced by a Smyth-complete quasi-metric (see~\cite{dbr}). Several equivalent characterizations of quasi-Polish spaces exist, but for this paper the most relevant characterization is that a space is quasi-Polish if and only if it is homeomorphic to a $\bpi 2$-subset of an $\omega$-algebraic domain (i.e., a countably based algebraic domain). A subset $A$ of a topological space $X$ is a \emph{$\bpi 2$-subset} if there exist sequences ${(U_i)}_{i\in\omega}$ and ${(V_i)}_{i\in\omega}$ of open subsets of $X$ satisfying
\[x\in A  \text{ if and only if } (\forall i\in\omega)[x\in U_i \Rightarrow x\in V_i].\]
We denote the collection of $\bpi 2$-subsets of $X$ by $\bpi 2(X)$. If $X$ is quasi-Polish then a subspace $A$ of $X$ is quasi-Polish if and only if $A \in\bpi 2(X)$.

The \emph{specialization preorder} on a topological space $X$ is defined as $x\leq y$ if and only if $x$ is in the closure of the singleton set $\{y\}$. This is a partial order if and only if $X$ is a $T_0$-space, in which case it is called the \emph{specialization order}. Given a subset $A\subseteq X$, we define $\uparw A = \{ y\in X \mid (\exists x\in A)\, x \leq y\}$ and $\dnarw A = \{ y\in X \mid (\exists x\in A)\, y \leq x\}$. Note that the argument to $\uparw$ and $\dnarw$ is always a set, and when $A=\{x\}$ is a singleton we do not abbreviate $\uparw A$ by $\uparw x$ in order to avoid potential ambiguities when working with powerspaces.

We refer the reader to~\cite{g03} and~\cite{GL13} for background on domain theory and the Scott topology on partially ordered sets. A.~Schalk's thesis~\cite{schalk:phd} is a valuable source of information on the powerspace constructions for general topological spaces, as well as the corresponding powerdomain and powerlocale constructions.

\section{Lower powerspaces}\label{sec:lowerpowerspace}

The \emph{lower powerspace} $\PL(X)$ is defined to be the set of closed subsets of $X$ with the lower Vietoris topology. The lower Vietoris topology is generated by sets of the form $\Diamond U = \{ A\in\PL(X) \,|\, A\cap U \not=\emptyset\}$ for open $U\subseteq X$. Note that the specialization order on $\PL(X)$ is subset inclusion.

It is well known that $\PL$ is a monad on the category of topological spaces (see Section~6.3 in~\cite{schalk:phd}):
\begin{itemize}
\item
Each continuous function $f\colon Y\to X$ maps to $\PL(f) \colon \PL(Y)\to \PL(X)$ defined as $\PL(f)(A) = Cl_X(f(A))$ (the closure of the image of $A$ under $f$),
\item
The unit $\eta^\PL_X\colon X\to \PL(X)$ maps $x$ to $\dnarw \{x\}$,
\item
The multiplication $\mu^\PL_X \colon \PL(\PL(X))\to \PL(X)$ maps $\calA$ to $\bigcup_{A\in\calA} A$.
\end{itemize}

\noindent
Note that ${(\eta^\PL_X)}^{-1}(\Diamond U) = U$ and ${(\mu^\PL_X)}^{-1}(\Diamond U) = \Diamond\Diamond U$ and ${(\PL(f))}^{-1}(\Diamond U) = \Diamond f^{-1}(U)$ for every open $U\subseteq X$ and continuous $f\colon Y \to X$.

Given a partially ordered set $P$, the \emph{weak topology} on $P$ is generated by open sets of the form $\{ y\in P \mid y\not\leq x\}$, where $x$ varies over elements of $P$. See Proposition 1.7 and Section 6.3 in~\cite{schalk:phd} for the following.

\begin{prop}\label{prop:lowerpowerspace_sober}
$\PL(X)$ is sober and the lower Vietoris topology coincides with the weak topology.
\qed%
\end{prop}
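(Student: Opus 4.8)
The plan is to prove the two assertions separately, starting with the (easier) identification of topologies and then establishing sobriety directly from the definition of an irreducible closed set.

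For the topology claim I would simply compare subbases. The weak topology on $\PL(X)$, ordered by inclusion, is generated by the sets $\{B\in\PL(X)\mid B\not\subseteq A\}$ as $A$ ranges over closed subsets of $X$. For a closed $A$, the condition $B\not\subseteq A$ is equivalent to $B\cap(X\setminus A)\not=\emptyset$, i.e.\ $B\in\Diamond(X\setminus A)$. Since $X\setminus A$ ranges over all of $\PO(X)$ as $A$ ranges over all closed subsets of $X$, the weak subbasis is literally $\{\Diamond U\mid U\in\PO(X)\}$, which is the defining subbasis of the lower Vietoris topology. Hence the two topologies coincide.

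For sobriety, first note that $\PL(X)$ is $T_0$: its specialization preorder is inclusion, which is antisymmetric, so generic points are unique whenever they exist, and it remains only to produce one for each irreducible closed set. Let $\calC\subseteq\PL(X)$ be irreducible and closed. I would build the candidate generic point from the opens that $\calC$ avoids: set $U^\ast=\bigcup\{U\in\PO(X)\mid \Diamond U\cap\calC=\emptyset\}$ and $A=X\setminus U^\ast$. Because $\Diamond$ carries unions of opens to unions (that is, $\Diamond(\bigcup_i U_i)=\bigcup_i\Diamond U_i$), one gets $\Diamond U^\ast\cap\calC=\emptyset$, so $U^\ast$ is the largest open whose diamond misses $\calC$; contrapositively, $U\not\subseteq U^\ast$ implies $\Diamond U\cap\calC\not=\emptyset$.

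I would then verify that $\calC=\{B\in\PL(X)\mid B\subseteq A\}=\overline{\{A\}}$, the closure of the point $A$ in $\PL(X)$. The inclusion $\calC\subseteq\{B\mid B\subseteq A\}$ is immediate, since any $B\in\calC$ satisfies $B\cap U^\ast=\emptyset$ (otherwise $B\in\Diamond U^\ast\cap\calC$), hence $B\subseteq A$. For the reverse inclusion it suffices to show $A\in\calC$, as $\calC$ is closed and therefore downward closed for inclusion. A basic open neighborhood of $A$ has the form $\bigcap_{i=1}^{n}\Diamond U_i$ with $A\cap U_i\not=\emptyset$, i.e.\ $U_i\not\subseteq U^\ast$, for each $i$; by the contrapositive above each $\Diamond U_i$ meets $\calC$, and irreducibility (extended to finite intersections by induction) forces $\bigcap_{i=1}^{n}\Diamond U_i$ to meet $\calC$. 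Thus every open neighborhood of $A$ meets $\calC$, and closedness of $\calC$ gives $A\in\calC$. The one step that is not purely formal is precisely this verification of $A\in\calC$, and it is exactly where irreducibility is indispensable: $\Diamond U\cap\Diamond V$ is strictly larger than $\Diamond(U\cap V)$ in general (a closed set can meet $U$ and $V$ without meeting $U\cap V$), so one cannot collapse the finite intersection to a single diamond and must instead propagate nonemptiness through irreducibility. Everything else reduces to the bookkeeping of $\Diamond$ commuting with unions together with the standard facts that closed sets are downward closed in the specialization order and that $T_0$ spaces have unique generic points.
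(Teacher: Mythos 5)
Your proof is correct. The paper itself gives no argument for this proposition—it is cited to Schalk's thesis (Proposition 1.7 and Section 6.3)—and your write-up is a correct, self-contained version of the standard proof: the identification of the two subbases via $\{B \mid B\not\subseteq A\} = \Diamond(X\setminus A)$ is exactly right, and your generic point $A = X\setminus U^{\ast}$ is precisely $\sup\calC = Cl_X\bigl(\bigcup\calC\bigr)$ in the complete lattice $\PL(X)$, which is how the sobriety of a complete lattice under the weak topology is usually established. You also correctly isolate the one place where irreducibility is genuinely needed, namely propagating nonemptiness through the finite intersection $\bigcap_{i}\Diamond U_i$.
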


\begin{exa}\label{ex:AX_not_Scott}
The topology on $\PL(X)$ is not the Scott topology in general, even when $X$ is quasi-Polish. As a counterexample, consider the set $X = \{ \infty \} \cup \omega$ partially ordered so that every element is less than or equal to $\infty$, but all other elements are incomparable. $X$ becomes a quasi-Polish space when given the weak topology, which can be seen by noting that $X$ is homeomorphic to the $\bpi 2$-subspace $\{ S\subseteq \omega \mid (\forall n,m\in\omega)\,n\not=m \Rightarrow (n\in S\vee m\in S)\}$ of $\calP(\omega)$ (where $\calP(\omega)$ is the powerset of the natural numbers with the Scott-topology). $\PL(X)$ consists of $\emptyset$, $X$, and all finite subsets of $\omega$. The subset of $\PL(X)$ of all sets containing at least two elements is Scott-open in $\PL(X)$, but it is not open with respect to the lower Vietoris topology because for each  non-empty open $U\subseteq X$ the subbasic open set $\Diamond U \subseteq \PL(X)$ contains cofinitely many closed singletons.
\qed%
\end{exa}

Although we will not need it later, we also mention the following:

\begin{prop}
If $X$ is a countably based $T_0$-space then $\eta^\PL_X(X) \in \bpi 2(\PL(X))$ if and only if $X$ is sober.
\end{prop}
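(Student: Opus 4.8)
The plan is to first record the two standing facts that drive everything. The unit sends $x$ to $\eta^{\PL}_X(x)=\dnarw\{x\}$, which is exactly the closure $\overline{\{x\}}$; and by Proposition~\ref{prop:lowerpowerspace_sober} the specialization order on $\PL(X)$ is inclusion, so every open subset of $\PL(X)$ is upward closed under inclusion. Since a point closure is always nonempty and irreducible, $\eta^{\PL}_X(X)$ is contained in the set $\calI$ of nonempty irreducible closed subsets of $X$, and (as $X$ is $T_0$) $X$ is sober precisely when $\eta^{\PL}_X(X)=\calI$. So the statement reduces to: $\eta^{\PL}_X(X)\in\bpi 2(\PL(X))$ if and only if $\eta^{\PL}_X(X)=\calI$.

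For the direction ``sober implies $\bpi 2$'' I would show $\calI\in\bpi 2(\PL(X))$ directly, and this is where countable basedness is used. Nonemptiness is the implication ``$A\in\PL(X)\Rightarrow A\in\Diamond X$''. For irreducibility, fix a countable base $\calB$ of $X$ and take, for each pair $U,V\in\calB$, the implication ``$A\in\Diamond U\cap\Diamond V\Rightarrow A\in\Diamond(U\cap V)$''; both sides are open in $\PL(X)$ and there are only countably many pairs. The one lemma to verify is that irreducibility, normally quantified over all opens, is equivalent to the conjunction of these basic instances: given arbitrary opens meeting $A$, shrink witnessing points to basic opens $U,V\in\calB$ contained in them, apply the basic instance to get $A\cap U\cap V\neq\emptyset$, and note $U\cap V$ lies inside the original intersection. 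Conjoining these countably many implications presents $\calI$ as a $\bpi 2$-subset, and sobriety gives $\eta^{\PL}_X(X)=\calI$.

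The converse is the main point, and the key is that the $\bpi 2$ presentation, the inclusion order, and irreducibility interact very rigidly. Suppose $\eta^{\PL}_X(X)=\{A\mid(\forall i)[A\in\mathcal{U}_i\Rightarrow A\in\mathcal{V}_i]\}$ with $\mathcal{U}_i,\mathcal{V}_i$ open in $\PL(X)$, and let $C$ be any nonempty irreducible closed set; I want $C\in\eta^{\PL}_X(X)$. If not, some index $i_0$ has $C\in\mathcal{U}_{i_0}$ but $C\notin\mathcal{V}_{i_0}$. Because $\mathcal{U}_{i_0}$ is open it contains a basic neighbourhood $\Diamond U_1\cap\cdots\cap\Diamond U_k$ of $C$, so $C\cap U_j\neq\emptyset$ for each $j$; irreducibility of $C$ then yields a point $x\in C\cap U_1\cap\cdots\cap U_k$, whence $\overline{\{x\}}$ meets every $U_j$ and lies in each $\Diamond U_j$, hence in $\mathcal{U}_{i_0}$. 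Since $\overline{\{x\}}=\eta^{\PL}_X(x)\in\eta^{\PL}_X(X)$ it satisfies the implication, so $\overline{\{x\}}\in\mathcal{V}_{i_0}$; but $x\in C$ gives $\overline{\{x\}}\subseteq C$, and upward closure of the open set $\mathcal{V}_{i_0}$ then forces $C\in\mathcal{V}_{i_0}$, a contradiction. Hence every nonempty irreducible closed set is a point closure, i.e. $X$ is sober.

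The step I expect to carry the weight is this converse argument, specifically the simultaneous use of three facts: extracting a single witness point from the irreducible set $C$ inside a finite basic neighbourhood, observing that its closure already satisfies the $\bpi 2$ constraints, and exploiting inclusion-upward-closure of opens to propagate membership from $\overline{\{x\}}$ up to $C$. It is worth remarking that this direction uses only that $X$ is $T_0$; countable basedness enters solely in the forward direction, to turn the irreducibility condition into a countable conjunction.
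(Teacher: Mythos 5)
Your proof is correct. The forward direction (sober implies $\bpi 2$) is essentially identical to the paper's: both identify $\eta^\PL_X(X)$ with the set of nonempty irreducible closed subsets and express irreducibility as a countable conjunction of implications $\Diamond U\cap\Diamond V\Rightarrow\Diamond(U\cap V)$ over a countable basis. The converse is where you genuinely diverge. The paper disposes of it in one line by combining three facts: $\PL(X)$ is always sober, $X$ is homeomorphic to $\eta^\PL_X(X)$, and every $\bpi 2$-subset of a sober space is sober (a result cited from the literature). You instead argue directly: given a $\bpi 2$-presentation of the image and a nonempty irreducible closed $C$ that purportedly violates some implication, you extract a single witness point $x$ of $C$ inside a finite basic neighbourhood $\Diamond U_1\cap\cdots\cap\Diamond U_k$ using irreducibility, note that $\dnarw\{x\}$ lies in the image and hence satisfies the implication, and propagate membership from $\dnarw\{x\}$ up to $C$ via the fact that opens in $\PL(X)$ are upward closed under inclusion. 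This amounts to a self-contained proof of exactly the special case of the cited result that is needed, so your argument is more elementary and avoids the external reference; the paper's version is shorter and makes the structural reason (sobriety is inherited by $\bpi 2$-subsets) explicit. Your closing observation that countable basedness enters only in the forward direction is correct, and applies equally to the paper's proof.
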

\begin{proof}
First assume $X$ is sober and has a countable basis $\calB$. Sobriety implies that $\eta^\PL_X(X)$ is precisely the subset of irreducible closed subsets of $X$. Furthermore, $A\in\PL(X)$ is irreducible if and only if $A$ is non-empty (i.e. $A\in \Diamond X$) and
\[(\forall U,V\in\cal B) A \in (\Diamond U \cap \Diamond V) \implies A \in \Diamond(U\cap V),\]
which clearly defines a $\bpi 2$-subset of $\PL(X)$.

For the converse, simply note that $\PL(X)$ is always sober, that $X$ is homeomorphic to $\eta^\PL_X(X)$, and that every $\bpi 2$-subset of a sober space is sober~\cite{d1?}.
\end{proof}

A space is \emph{Baire} if and only if countable intersections of dense open sets are dense. Equivalently, a space is Baire if and only if countable intersections of dense $\bpi 2$-sets are dense. The equivalence of these two definitions is not entirely obvious for non-metrizable spaces, but it is essentially the content of Theorem~2.24 of~\cite{H15}. This version of the Baire category theorem was also independently discovered in~\cite{BG15} (Theorem~3.14). The dual statement (in terms of $\bsigma 2$-sets) is proven in~\cite{d1?} (Lemma~5.1) for (non-empty) Baire spaces, and an alternative proof that applies to (non-empty) quasi-Polish spaces is given in~\cite{dbr14} (Lemma~30).

A space is \emph{completely Baire} if every closed subspace is a Baire space. Completely Baire spaces play a central role in the spatiality results of~\cite{H15} (where the term \emph{(extended) local Baire property} is used), and these spaces are further investigated in~\cite{d1?}. It was shown in~\cite{dbr} that every quasi-Polish space is a Baire space, and therefore every quasi-Polish space is completely Baire because every closed subspace of a quasi-Polish space is quasi-Polish.

\begin{prop}\label{prop:lower_preserves_pi2}
Let $Y$ be a countably based completely Baire space. If $e\colon X\to Y$ is a topological embedding of $X$ into $Y$ as a $\bpi 2$-subspace, then $\PL(e)\colon \PL(X)\to \PL(Y)$ is a topological embedding of $\PL(X)$ into $\PL(Y)$ as a $\bpi 2$-subspace.
\end{prop}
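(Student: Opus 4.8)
The plan is to treat the two assertions separately — that $\PL(e)$ is a topological embedding, and that its image is a $\bpi 2$-subset of $\PL(Y)$ — with the second being the substantial part. Throughout I would identify $X$ with the subspace $e(X)\subseteq Y$, so that the open subsets of $X$ are exactly the sets $W\cap X$ with $W$ open in $Y$, and $\PL(e)(A)=Cl_Y(A)$ for $A$ closed in $X$.

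For the embedding claim, continuity and the behaviour on subbasic opens are already recorded in Section~\ref{sec:lowerpowerspace}, namely $(\PL(e))^{-1}(\Diamond W)=\Diamond(W\cap X)$ for open $W\subseteq Y$. Since $W\mapsto W\cap X$ ranges over all open subsets of $X$, these preimages generate the lower Vietoris topology of $\PL(X)$, so $\PL(e)$ induces the subspace topology; injectivity follows because $Cl_Y(A)\cap X=Cl_X(A)=A$ for $A$ closed in $X$, so that $A$ is recovered from $\PL(e)(A)$. The same computation shows that the image of $\PL(e)$ is exactly $\{B\in\PL(Y)\mid B=Cl_Y(B\cap X)\}$, i.e. the closed sets $B$ in which $B\cap X$ is dense (the inclusion $Cl_Y(B\cap X)\subseteq B$ being automatic).

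The heart of the proof is to express this density condition as a $\bpi 2$-condition in the variable $B$. I would fix open sequences $(U_i)_{i\in\omega}$, $(V_i)_{i\in\omega}$ witnessing $X\in\bpi 2(Y)$, so that $X=\bigcap_i((Y\setminus U_i)\cup V_i)$, and for $B\in\PL(Y)$ set $A_i=(B\setminus U_i)\cup(B\cap V_i)$, the union of a relatively closed and a relatively open subset of $B$; thus $A_i\in\bpi 2(B)$ and $B\cap X=\bigcap_i A_i$. The key reduction is that $B\cap X$ is dense in $B$ if and only if every $A_i$ is dense in $B$: one direction is immediate since each $A_i\supseteq B\cap X$, and the converse is precisely where complete Baireness enters, since $B$ is closed in $Y$ and hence a Baire space, so a countable intersection of dense $\bpi 2$-subsets of $B$ is again dense. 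This is the step I expect to be the main obstacle: the condition $B=Cl_Y(B\cap X)$ a priori hides an existential quantifier over points of the $\bpi 2$-set $X$ and so looks analytic, and it is exactly the Baire argument that keeps it at the second level of the hierarchy.

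Finally I would encode the assertion that $A_i$ is dense in $B$ using only the subbasic opens $\Diamond(\cdot)$. Fixing a countable base $\calB$ of $Y$, a routine point-set argument shows that $A_i$ is dense in $B$ if and only if for every $W\in\calB$ with $W\subseteq U_i$ one has $B\in\Diamond W\Rightarrow B\in\Diamond(W\cap V_i)$; restricting to basic opens contained in $U_i$ is what lets the otherwise non-open set $W\setminus U_i$ drop out. Since $\Diamond W$ and $\Diamond(W\cap V_i)$ are open in $\PL(Y)$ and the resulting conjunction of implications ranges over the countable set of pairs $(i,W)$, the image of $\PL(e)$ is carved out of $\PL(Y)$ by a countable family of implications between open conditions, hence is a $\bpi 2$-subset of $\PL(Y)$, which completes the argument.
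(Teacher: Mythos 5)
Your proposal is correct and follows essentially the same route as the paper: identify the image of $\PL(e)$ with the closed sets $B$ in which $B\cap X$ is dense, use complete Baireness of the closed subspace $B$ to reduce this to density of each $(Y\setminus U_i)\cup V_i$ in $B$, and encode that density as a countable conjunction of implications between subbasic opens $\Diamond(\cdot)$ of $\PL(Y)$. The only (immaterial) difference is your choice of implications, quantified over basic $W\subseteq U_i$ with $\Diamond W\Rightarrow\Diamond(W\cap V_i)$, versus the paper's $\Diamond(B\cap U_i)\Rightarrow\Diamond(B\cap V_i)$ over arbitrary basic $B$; the two families carve out the same $\bpi 2$-set.
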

\begin{proof}
Assume $Y$ is a countably based completely Baire space with countable basis $\calB$. Assume $X\in \bpi 2(Y)$, and let $U_i,V_i$ $(i\in\omega)$ be open subsets of $Y$ such that $x\in X$ if and only if $(\forall i\in\omega)[x\in U_i \Rightarrow x\in V_i]$.

Let $e\colon X\to Y$ be the embedding of $X$ into $Y$, and let $f = \PL(e) \colon \PL(X) \to \PL(Y)$. By definition, $f$ maps $A\in \PL(X)$ to $Cl_Y(A)\in\PL(Y)$. Since $e$ is an embedding, for any open $W\subseteq X$ there is open $W'\subseteq Y$ such that $W = e^{-1}(W')$, hence $\Diamond W = \Diamond e^{-1}(W') = {(\PL(e))}^{-1}(\Diamond W') = f^{-1}(\Diamond W')$. It follows that $f$ is a topological embedding of $\PL(X)$ into $\PL(Y)$.

Let $\calS$ be the subset of all $A\in \PL(Y)$ satisfying $A\in\Diamond(B \cap U_i) \Rightarrow A\in \Diamond(B \cap V_i)$ for all $i\in\omega$ and $B\in\cal B$. Clearly $\calS\in\bpi 2(\PL(Y))$, hence the theorem will be proved by showing that $\calS = range(f)$.

First we show $range(f)\subseteq \calS$. Fix $A\in\PL(X)$. Assume $i\in\omega$ and $B\in \calB$ are such that $f(A)\in\Diamond(B\cap U_i)$. Since $f(A) = Cl_Y(A)$ there exists $x\in A \cap B \cap U_i$, so the definition of $X$ and the assumption $A\subseteq X$ imply $x \in  A\cap B \cap V_i$. Thus $f(A)\in \Diamond(B\cap V_i) $, and it follows that $f(A)\in \calS$.

Finally, we show that $\calS \subseteq range(f)$. Fix $A\in \calS$. We prove that $X\cap A$ is dense in $A$, hence $A = f(A\cap X)= Cl_Y(A\cap X)$. Since $X = \bigcap_{i\in\omega} ((Y\setminus U_i) \cup V_i)$ and $Y$ is completely Baire, it suffices to show that $(Y\setminus U_i) \cup V_i$ is dense in $A$ for each $i\in\omega$. Let $B\in\calB$ be a basic open set such that $A\cap B \not=\emptyset$. If $A\cap B \cap (Y\setminus U_i) = \emptyset$, then $A\cap B \subseteq U_i$ hence $A\in\Diamond(B\cap U_i)$. From the assumption $A\in \calS$ we obtain $A\in \Diamond(B\cap V_i)$, and it follows that $(Y\setminus U_i) \cup V_i$ is dense in $A$.
\end{proof}


As shown by D.~Scott~\cite{S76} (see also~\cite{dbr}), a topological embedding $e\colon X \to Y$ which identifies $X$ with a $\bpi 2$-subspace of $Y$ is actually an equalizer for a pair of continuous functions $f,g\colon Y\to \calP(\omega)$, where $\calP(\omega)$ is the powerset of the natural numbers with the Scott topology. Therefore, the above proposition would be trivial if the functor $\PL$ preserved equalizers. However, it is easy to see that $\PL$ does not preserve equalizers in general. As a simple counterexample, let $\Bold2=\{0,1\}$ have the discrete topology, define $f$ to be the identity on $\Bold2$, and let $g$ be defined as $g(0)= 1$ and $g(1)=0$. The equalizer of $f$ and $g$ is the embedding $e\colon \emptyset \to \Bold2$ of the empty subspace $\emptyset$ into $\Bold2$. Note that $\PL(\emptyset) = \{\emptyset\}$ has exactly one point, whereas the equalizer for $\PL(f)$ and $\PL(g)$ is the two-point subspace $\{\emptyset, \Bold2\}$ of $\PL(\Bold2)$. Therefore, $\PL(e)$ is not the equalizer of $\PL(f)$ and $\PL(g)$.


\begin{thm}\label{thrm:lowerpowerspace}
If $X$ is quasi-Polish then $\PL(X)$ is quasi-Polish.
\end{thm}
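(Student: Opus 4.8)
The plan is to realize $\PL(X)$ as a $\bpi 2$-subspace of $\PL(Y)$ for a suitable ambient space $Y$ and then invoke the characterization of quasi-Polish spaces as $\bpi 2$-subspaces of $\omega$-algebraic domains. Since $X$ is quasi-Polish, by the characterization recalled in the preliminaries there is a topological embedding $e\colon X\to Y$ identifying $X$ with a $\bpi 2$-subspace of some $\omega$-algebraic domain $Y$. The strategy is to apply Proposition~\ref{prop:lower_preserves_pi2} to this embedding, which will yield that $\PL(e)\colon \PL(X)\to \PL(Y)$ is a topological embedding of $\PL(X)$ into $\PL(Y)$ as a $\bpi 2$-subspace.

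To use Proposition~\ref{prop:lower_preserves_pi2} I must first verify its hypotheses on $Y$: namely that $Y$ is countably based and completely Baire. An $\omega$-algebraic domain is countably based by definition, and it is quasi-Polish (being an $\omega$-algebraic domain with the Scott topology), so by the discussion preceding the proposition every quasi-Polish space is completely Baire; hence $Y$ satisfies both hypotheses. This delivers $\PL(X)\in\bpi 2(\PL(Y))$.

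It then remains to show that $\PL(Y)$ itself is quasi-Polish, after which the conclusion follows because a $\bpi 2$-subspace of a quasi-Polish space is again quasi-Polish (as stated in the preliminaries, a subspace $A$ of a quasi-Polish space is quasi-Polish iff $A\in\bpi 2$). So I would reduce the theorem to the special case where $X$ is already an $\omega$-algebraic domain. For this case I expect $\PL(Y)$ to again be (homeomorphic to) an $\omega$-algebraic domain, or at least manifestly quasi-Polish: the lower powerspace of a countably based space is countably based, it is sober by Proposition~\ref{prop:lowerpowerspace_sober}, and one can exhibit a countable basis together with an explicit embedding into an $\omega$-algebraic domain (the closed sets of $Y$ ordered by inclusion form an algebraic lattice whose compact elements are finitely generated, and the lower Vietoris topology coincides with the weak topology by Proposition~\ref{prop:lowerpowerspace_sober}).

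The main obstacle I anticipate is precisely this base case: confirming cleanly that $\PL(Y)$ is quasi-Polish when $Y$ is an $\omega$-algebraic domain. The subtlety is that the lower Vietoris topology on $\PL(Y)$ is \emph{not} in general the Scott topology, as Example~\ref{ex:AX_not_Scott} warns, so one cannot simply cite that $\PL(Y)$ is an $\omega$-algebraic domain with its Scott topology and be done. Instead I would need to pin down the correct countably based algebraic domain whose \emph{weak} topology matches the lower Vietoris topology, using the coincidence of the lower Vietoris and weak topologies from Proposition~\ref{prop:lowerpowerspace_sober}, and then verify that $\PL(Y)$ sits inside it as a $\bpi 2$-subspace. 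Everything else in the argument is a routine assembly of the cited preservation and characterization results.
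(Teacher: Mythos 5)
Your reduction is exactly the paper's: embed $X$ as a $\bpi 2$-subspace of an $\omega$-algebraic domain $D$, apply Proposition~\ref{prop:lower_preserves_pi2} (whose hypotheses you verify correctly), and conclude via the fact that $\bpi 2$-subspaces of quasi-Polish spaces are quasi-Polish. The one step you leave open --- that $\PL(D)$ is quasi-Polish for $\omega$-algebraic $D$ --- is precisely where the paper invokes a classical result of Smyth and Schalk: when $D$ is an $\omega$-algebraic domain, $\PL(D)$ with the lower Vietoris topology \emph{is} an $\omega$-algebraic domain, i.e., the lower Vietoris and Scott topologies do coincide in this case. Your caution on account of Example~\ref{ex:AX_not_Scott} is understandable but misplaced here: the space in that example is a quasi-Polish space carrying the weak topology of a poset, not a continuous domain, so it does not obstruct the coincidence for algebraic $D$. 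Your sketched detour through the weak topology is the less natural route --- the ambient $\omega$-algebraic domain you would embed into carries the Scott topology, so what you actually need is the identification of the lower Vietoris topology with the Scott topology on the closed-set lattice of $D$, which is exactly the cited fact. With that citation supplied, your argument is the paper's proof.
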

\begin{proof}
We can assume without loss of generality that $X$ is a $\bpi 2$-subspace of some $\omega$-algebraic domain $D$ (in particular, we can always take $D=\calP(\omega)$). It is well known (see, e.g., the results by M.~B.~Smyth~\cite{sm83} and A.~Schalk~\cite{schalk:phd}) that $\PL(D)$ is an $\omega$-algebraic domain. Every $\omega$-algebraic domain is quasi-Polish hence completely Baire, so Proposition~\ref{prop:lower_preserves_pi2} implies $\PL(X)$ is homeomorphic to a $\bpi 2$-subspace of $\PL(D)$. Therefore, $\PL(X)$ is quasi-Polish.
\end{proof}

\section{Upper powerspaces}\label{sec:upperpowerspace}

The \emph{upper powerspace} $\PU(X)$ is defined to be the set of compact saturated subsets of $X$ with the upper Vietoris topology. A subset of a topological space is \emph{saturated} if and only if it is equal to the intersection of all of its open neighborhoods. The upper Vietoris topology is generated by sets of the form $\Box U = \{ K\in\PU(X) \,|\, K\subseteq U\}$ for open $U\subseteq X$. Note that the specialization order on $\PU(X)$ is \emph{reverse} subset inclusion.

It is well known that $\PU$ is a monad on the category of topological spaces (see Section~7.3 in~\cite{schalk:phd}):
\begin{itemize}
\item
Each continuous function $f\colon Y\to X$ maps to $\PU(f) \colon \PU(Y)\to \PU(X)$ defined as $\PU(f)(K) = \uparw f(K)$ (the saturation of the image of $K$ under $f$),
\item
The unit $\eta^\PU_X \colon X\to \PU(X)$ maps $x$ to $\uparw \{x\}$,
\item
The multiplication $\mu^\PU_X \colon \PU(\PU(X))\to \PU(X)$ maps $\calK$ to $\bigcup_{K\in\cal K} K$.
\end{itemize}

\noindent
Note that ${(\eta^\PU_X)}^{-1}(\Box U) = U$ and ${(\mu^\PU_X)}^{-1}(\Box U) = \Box\Box U$ and ${(\PU(f))}^{-1}(\Box U) = \Box f^{-1}(U)$ for every open $U\subseteq X$ and continuous $f\colon Y \to X$.

The proof of the next proposition was inspired by work by M.~Schr\"{o}der on the Scott topology on the open set lattice of sequential spaces (see, e.g.,~\cite{Sch:HMT}).

\begin{prop}\label{prop:UpperViet=Scott}
If $X$ is sober and countably based, then the upper Vietoris topology on $\PU(X)$ coincides with the Scott topology (where $\PU(X)$ is ordered by reverse subset inclusion).
\end{prop}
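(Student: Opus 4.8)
The plan is to prove the two topologies coincide by mutual inclusion, after first recording that $(\PU(X),\supseteq)$ is a dcpo whose directed suprema are filtered intersections. Indeed, since $X$ is sober it is well-filtered (the Hofmann--Mislove theorem), so if $\calK\subseteq\PU(X)$ is filtered under inclusion then $\bigcap\calK$ is again compact saturated and is the supremum of $\calK$ in the order $\supseteq$; moreover $\bigcap\calK\subseteq U$ implies $K\subseteq U$ for some $K\in\calK$. I would also record that $\Box U\cap\Box V=\Box(U\cap V)$, so the upper Vietoris open sets are exactly the unions of sets of the form $\Box U$.

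For the easy inclusion (upper Vietoris $\subseteq$ Scott) I would check that each subbasic $\Box U$ is Scott-open. It is upward closed in $(\PU(X),\supseteq)$ because $K\subseteq U$ and $K'\subseteq K$ give $K'\subseteq U$; and it is inaccessible by directed suprema precisely by the well-filtered property: if a filtered family has $\bigcap\calK\in\Box U$, i.e.\ $\bigcap\calK\subseteq U$, then some member already lies in $\Box U$. Hence every upper Vietoris open set, being a union of such $\Box U$, is Scott-open.

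The substance is the reverse inclusion (Scott $\subseteq$ upper Vietoris). Fix a Scott-open $\calU$ and $K\in\calU$; I must produce an open $U\supseteq K$ with $\Box U\subseteq\calU$. Here is where countable basedness enters: using a countable base I would extract a decreasing sequence of open neighbourhoods $U_0\supseteq U_1\supseteq\cdots$ with $\bigcap_n U_n=K$ (each $U_n$ a finite union of basic sets containing the compact set $K$, using compactness), cofinal among all open neighbourhoods of $K$, so that it suffices to find one $U_n$ with $\Box U_n\subseteq\calU$. Arguing by contradiction, suppose $\Box U_n\not\subseteq\calU$ for every $n$ and pick compact saturated witnesses $K_n\subseteq U_n$ with $K_n\notin\calU$. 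Writing $\calC=\PU(X)\setminus\calU$ for the (Scott-closed) complement, I note that $\calC$ is closed under taking supersets and under filtered intersections, and that replacing $K_n$ by $K\cup K_n$ (still compact, saturated, a superset of $K_n$, hence still in $\calC$, now containing $K$ and contained in $U_n$) I may assume $K\subseteq K_n\subseteq U_n$ with $K_n\in\calC$. The aim is to assemble from these witnesses a filtered family of compact saturated sets lying in $\calC$ whose intersection is exactly $K$: since $\calC$ is Scott-closed it is closed under such directed suprema, so this would force $K\in\calC$, contradicting $K\in\calU$. Concretely I would try to realise this either by refining the $K_n$ into a decreasing chain converging to $K$, or by invoking Zorn's lemma (using that chains in $\calP=\{L\in\calC\mid K\subseteq L\}$ have lower bounds given by filtered intersections, which are compact saturated by well-filteredness and remain in $\calC$) to obtain a minimal $L^\ast\in\calP$ and then showing $L^\ast=K$.

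The main obstacle is exactly this assembly step, and it is where sobriety and countability must cooperate. The naive move of intersecting the witnesses $K_n$ fails twice over: finite intersections of compact saturated sets need not be compact in a merely sober space, and in any case intersection moves a set \emph{toward} $\calU$ (the complement $\calC$ is stable under supersets, not subsets), so the intersected sets need not stay in $\calC$. What saves the argument is that \emph{filtered} intersections are well-behaved --- they are compact saturated by well-filteredness and remain in the Scott-closed set $\calC$ --- so the real work is to convert the unordered sequence of witnesses into a genuinely filtered (e.g.\ decreasing) family still contained in $\calC$ and still shrinking to $K$. I expect this monotone selection, carried out along the countable neighbourhood base in the spirit of Schr\"oder's sequential analysis of the Scott topology, to be the technical heart of the proof; once it is in place, well-filteredness delivers the contradiction and hence the desired neighbourhood $\Box U_n\subseteq\calU$.
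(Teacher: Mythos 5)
Your setup and the easy inclusion are fine (the paper gets upper Vietoris $\subseteq$ Scott from sobriety of $\PU(X)$ rather than directly from Hofmann--Mislove, but your route is equally valid), and your reduction of the hard inclusion to a contradiction with witnesses $K_n\subseteq U_n$, $K_n\notin\calU$, matches the paper's strategy. But the step you yourself flag as ``the technical heart'' is genuinely missing, and neither of the two ways you propose to fill it works as stated. The Zorn's lemma route is a dead end: a minimal compact saturated $L^\ast\in\calC$ with $K\subseteq L^\ast$ need not equal $K$, and nothing in that argument ever uses the witnesses $K_n$ --- indeed in the two-point discrete space with $\calU=\Box\{a\}$ and $K=\{a\}$, the set $L^\ast=\{a,b\}$ is a minimal element of your $\calP$ and is strictly larger than $K$, yet the proposition holds there. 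So minimality alone cannot force $L^\ast=K$. The ``monotone selection'' route is the right one, but selecting or shrinking the $K_n$ is not how it goes.

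The missing idea is to take \emph{unions of tails}: set $W_k = K\cup\bigcup_{n\geq k}K_n$. Compactness of $W_k$ is exactly where the convergence $K_n\to K$ (in the upper Vietoris topology, along your cofinal sequence $U_n$) is used: any open cover of $W_k$ has a finite subcover of $K$, whose union is an open neighbourhood of $K$ and hence contains all but finitely many $K_n$ entirely; the finitely many exceptions are each compact, so finitely many more cover elements suffice. Each $W_k$ is saturated (a union of upper sets), the $W_k$ form a decreasing chain in inclusion, i.e.\ an increasing chain in $(\PU(X),\supseteq)$, and $\bigcap_k W_k=K$ because for $x\notin K$ the open set $X\setminus\dnarw\{x\}$ contains $K$, hence contains cofinitely many $K_n$, so $x$ lies in only finitely many $W_k$. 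Scott-openness of $\calU$ then gives $W_k\in\calU$ for large $k$; since $K_k\subseteq W_k$ means $K_k\geq W_k$ in the order and $\calU$ is an upper set, this forces $K_k\in\calU$, contradicting the choice of witnesses. Note that no intersection of witnesses and no appeal to well-filteredness is needed in this direction --- the union construction sidesteps precisely the two failures you correctly diagnosed. Without this construction (or an equivalent), your argument does not close.
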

\begin{proof}
It is known that if $X$ is sober then $\PU(X)$ is sober with respect to the upper Vietoris topology (see Lemma 7.20 in~\cite{schalk:phd}). It follows that the upper Vietoris topology is contained within the Scott topology on $\PU(X)$.

Conversely, let $\calH\subseteq \PU(X)$ be any non-empty Scott-open set and fix $K \in \calH$. Assume for a contradiction that for every subset $\calU\subseteq \PU(X)$ which is open with respect to the upper Vietoris topology, if $K\in \calU$ then $\calU \not\subseteq \calH$. $\PU(X)$ is countably based because $X$ has a countable basis, so there exists a decreasing sequence of upper Vietoris open sets ${(\calU_n)}_{n\in\omega}$ forming a neighborhood basis for $K$ in the upper Vietoris topology. By assumption there exists $K_n \in \calU_n\setminus \calH$, hence the sequence ${(K_n)}_{n\in\omega}$ converges to $K$ with respect to the upper Vietoris topology, but none of the $K_n$ are in $\calH$.

For each $k\in\omega$, define $W_k = \bigcup_{n\geq k} K_n \cup K$. Since the $K_n$ converge to $K$ with respect to the upper Vietoris topology, any covering of $K$ by open sets will cover all but finitely many of the $K_n$, so it easily follows that each $W_k$ is a compact saturated subset of $X$. We claim that $K=\bigcap_{k\in\omega} W_k$. Clearly $K$ is contained in each of the $W_k$. Conversely, if $x\in X\setminus K$, then $U = X\setminus \dnarw \{x\}$ is an open subset of $X$ containing $K$, hence all but finitely many $K_n$ are in $\Box U$. Therefore, $x$ is in at most finitely many $W_k$, which implies $x \not\in \bigcap_{k\in\omega}W_k$.


Since the ordering on $\PU(X)$ is reverse subset inclusion, the $W_k$ are an increasing chain in $\PU(X)$ and $K$ is the least upper bound of this chain. Since $\calH$ is Scott-open, there must be some $k_0$ such that $W_k$ is in $\calH$ for all $k \geq k_0$. But then $K_n$ is in $\calH$ for every $n \geq k_0$, a contradiction.

Therefore, there must be $\calU\subseteq \PU(X)$ which is open with respect to the upper Vietoris topology and satisfies $K \in \calU \subseteq \calH$. It follows that the upper Vietoris and Scott topologies coincide on $\PU(X)$.
\end{proof}

\begin{exa}
The topology on $\PU(X)$ is not the weak topology in general, even when $X$ is quasi-Polish. As a counterexample, consider the natural numbers $\omega$ with the discrete topology. Then $\PU(\omega)$ consists of all finite subsets of $\omega$, including the empty set, ordered by reverse subset inclusion. The singleton $\{\emptyset\} = \Box\emptyset$ is open with respect to the upper Vietoris topology, but is not open with respect to the weak topology. Indeed, the subbasic open subsets for its weak topology are of the form $\calU_K = \{ K' \in \PU(X) \mid K'\not\supseteq K\}$ for $K\in\PU(X)$, and clearly $\emptyset \in \calU_K$ if and only if $K\not=\emptyset$. But if $m$ is any element of $K$, then $\{n\} \in \calU_K$ for each $n\not=m$. Therefore, any open neighborhood of $\emptyset$ in the weak topology contains cofinitely many compact saturated singletons.
\qed%
\end{exa}

We include the following for completeness. Note that sobriety is not required for the following.

\begin{prop}
If $X$ is a countably based $T_0$-space then $\eta^\PU_X(X) \in \bpi 2(\PU(X))$.
\end{prop}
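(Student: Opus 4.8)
The plan is to show that the unit embedding $\eta^\PU_X$ identifies $X$ with a $\bpi 2$-subset of $\PU(X)$ by exhibiting explicit sequences of open sets witnessing the $\bpi 2$ condition. First I would recall that $\eta^\PU_X(x) = \uparw\{x\}$ and that $(\eta^\PU_X)^{-1}(\Box U) = U$, so the map is a topological embedding onto its image; what remains is to characterize the image $\eta^\PU_X(X)$ as a $\bpi 2$-subset. The key observation is that a compact saturated set $K$ is of the form $\uparw\{x\}$ for some $x$ exactly when $K$ is the saturation of a single point, i.e. $K$ has a least element in the specialization order (equivalently, $K$ is irreducible in a suitable sense and has a generic point). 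Since $X$ is only assumed $T_0$ and countably based, not sober, I cannot use sobriety to recover a generic point from irreducibility, so the characterization must be phrased purely in terms of the basis.

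The main technical step is to translate ``$K = \uparw\{x\}$ for some $x\in X$'' into a $\bpi 2$-formula over the subbasic opens $\Box B$ for $B$ ranging over a fixed countable basis $\calB$. Working with reverse inclusion, $\uparw\{x\}\subseteq U$ iff $x\in U$, so $\uparw\{x\}$ is the smallest compact saturated set containing $x$. The natural conditions to impose are: (i) $K$ is nonempty, and (ii) a \emph{primeness/filter} condition saying that for all basic $B_1,B_2\in\calB$, if $K\in\Box(B_1\cup B_2)$ then $K\in\Box B_1$ or $K\in\Box B_2$ — this forces every open cover of $K$ by two basic opens to have $K$ inside one of them, which for a point-saturation holds because $x$ lies in one of $B_1,B_2$. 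Each such disjunctive clause ``$K\in\Box(B_1\cup B_2)\Rightarrow (K\in\Box B_1\vee K\in\Box B_2)$'' is not literally of the form $x\in U_i\Rightarrow x\in V_i$, so I would rewrite the disjunction $K\in\Box B_1\vee K\in\Box B_2$ as membership in the single open set $\Box B_1\cup\Box B_2$, giving a genuine $\bpi 2$-clause with $U_i = \Box(B_1\cup B_2)$ and $V_i = \Box B_1\cup \Box B_2$; nonemptiness contributes a further clause (with $U_i$ the whole space and $V_i = \Box X$, or handled by requiring $K\neq\emptyset$). Since $\calB$ is countable, there are only countably many such pairs, so the resulting intersection is indeed a $\bpi 2$-set.

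The heart of the proof is verifying that this $\bpi 2$-set equals $\eta^\PU_X(X)$ exactly, and this is where I expect the main obstacle. The inclusion $\eta^\PU_X(X)\subseteq \calS$ is routine: for $K=\uparw\{x\}$, nonemptiness is clear and the disjunctive clauses hold because $\uparw\{x\}\subseteq B_1\cup B_2$ forces $x\in B_1$ or $x\in B_2$, hence $\uparw\{x\}\subseteq B_1$ or $\uparw\{x\}\subseteq B_2$. The reverse inclusion $\calS\subseteq \eta^\PU_X(X)$ is the delicate part: given $K\in\calS$, I must produce a point $x\in X$ with $K=\uparw\{x\}$ without sobriety. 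The strategy is to show that the family of basic opens containing $K$ (i.e. $\{B\in\calB\mid K\subseteq B\}$) forms a completely prime filter on $\calB$ whose ``intersection'' is a single saturated point; compactness of $K$ together with the disjunctive closure condition should guarantee that $\bigcap\{B\in\calB\mid K\subseteq B\}$ reduces to $\uparw\{x\}$ for a point $x\in K$ obtained as the minimum of $K$ in the specialization order. Concretely, I would use that $K$ is compact and that for each pair of basic opens not containing a common point of $K$ the primeness condition is violated, forcing all of $K$ to concentrate above one point; the $T_0$ and countably based hypotheses ensure this point is unique and that $K$ is its upward closure. Pinning down this limiting argument carefully — ensuring that the disjunctive conditions over a countable basis genuinely collapse $K$ to a point-saturation rather than merely to an irreducible compact set — is the crux.
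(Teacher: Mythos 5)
The $\bpi 2$-formula you propose is exactly the one the paper uses: $K$ is non-empty and $K\in\Box(B_1\cup B_2)\Rightarrow K\in\Box B_1\cup\Box B_2$ for all $B_1,B_2$ in a countable basis $\calB$ closed under finite unions, and your verification of $\eta^\PU_X(X)\subseteq\calS$ is fine. (One small slip: your suggested non-emptiness clause with $U_i$ the whole space and $V_i=\Box X$ is vacuously true and excludes nothing; the correct clause is $K\in\Box\emptyset\Rightarrow K\in\emptyset$, noting $\Box\emptyset=\{\emptyset\}$ is open.) The genuine gap is the reverse inclusion $\calS\subseteq\eta^\PU_X(X)$, which you yourself flag as the crux and then leave as a strategy rather than a proof. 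The sketch you give --- ``for each pair of basic opens not containing a common point of $K$ the primeness condition is violated, forcing all of $K$ to concentrate above one point'' --- does not, as stated, produce a least element of $K$: primeness of the filter $\{U\mid K\subseteq U\}$ alone only makes it a completely prime filter of opens, and in a non-sober space such a filter need not come from a point, so some additional use of compactness of $K$ itself is indispensable and is missing from your outline.

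There are two ways to close the gap. The paper's route is to quote Lemma~5.2 of~\cite{dBSS15b}: in a countably based $T_0$-space every $K\in\PU(X)$ is the saturation of a compact $T_1$-subspace $M$. If $M$ contained two distinct points $x,y$, then (since $M$ is an antichain for the specialization order) the open sets $X\setminus\dnarw\{x\}$ and $X\setminus\dnarw\{y\}$ cover $K$ while neither contains $K$; this contradicts the primeness condition, which extends from members of $\calB$ to arbitrary finite open covers by compactness of $K$ and closure of $\calB$ under finite unions. Hence $M=\{x\}$ and $K=\uparw\{x\}$. Alternatively, a self-contained argument runs as follows: extend primeness to arbitrary finite open covers as above; deduce that every finite $F\subseteq K$ has a lower bound in $K$ (otherwise $\{X\setminus\dnarw\{y\}\}_{y\in F}$ is a prime-violating cover of $K$); conclude that the closed sets $K\cap\dnarw\{y\}$ for $y\in K$ have the finite intersection property, so compactness of $K$ yields a point $x\in K$ with $x\leq y$ for all $y\in K$, and then $K=\uparw\{x\}$ since $K$ is saturated, with $x$ unique by $T_0$. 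Your proposal contains neither the appeal to the $T_1$-saturation lemma nor this finite-intersection-property step, and without one of them the characterization of the image is not established.
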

\begin{proof}
Let $\calB$ be a countable basis for $X$ which is closed under finite unions. Each $K\in \PU(X)$ is equal to the saturation of a compact $T_1$-subspace of $X$ (see, e.g., Lemma~5.2 in~\cite{dBSS15b}), which implies that $K=\eta^\PU_X(x)$ for some $x\in X$ if and only if $K$ is non-empty (i.e. $K\not\in \Box \emptyset$) and
\[(\forall U,V\in\cal B) K \in \Box( U \cup V) \implies K \in (\Box U \cup \Box V).\]
Clearly this defines a $\bpi 2$-subset of $\PU(X)$.
\end{proof}

Using the same counterexample we gave for $\PL$, it is easy to see that $\PU$ does not preserve equalizers in general. However, we have the following, which was proven in~\cite{dBSS15b}.

\begin{prop}\label{prop:upper_preserves_pi2}
Assume $X$ and $Y$ are countably based $T_0$-spaces and $e\colon X \to Y$ is a topological embedding of $X$ into $Y$ as a $\bpi 2$-subspace. Then $\PU(e)\colon \PU(X) \to \PU(Y)$ is a topological embedding of $\PU(X)$ into $\PU(Y)$ as a $\bpi 2$-subspace.
\end{prop}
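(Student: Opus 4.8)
The plan is to mirror the proof of Proposition~\ref{prop:lower_preserves_pi2}, but to replace the Baire category argument (which is unavailable here, since $Y$ is only assumed countably based and $T_0$) by a compactness argument. I identify $X$ with $e(X)\subseteq Y$, fix a countable basis $\calB$ of $Y$ closed under finite unions, and fix open sets $U_i,V_i$ $(i\in\omega)$ witnessing $X\in\bpi 2(Y)$, so that $x\in X$ if and only if $(\forall i)(x\in U_i\Rightarrow x\in V_i)$. Writing $f=\PU(e)$, the embedding part is routine and parallels the lower case: since $e$ is an embedding, every subbasic open $\Box W$ of $\PU(X)$ has the form $\Box e^{-1}(W')=f^{-1}(\Box W')$ for some open $W'\subseteq Y$, so $f$ is initial; injectivity follows because distinct compact saturated subsets of $X$ are separated by some $\Box W$. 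It then remains to exhibit $range(f)$ as a $\bpi 2$-subset of $\PU(Y)$.

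I would define
$$\calS=\{L\in\PU(Y)\mid (\forall i\in\omega)(\forall B\in\calB)\ L\in\Box(B\cup U_i)\Rightarrow L\in\Box(B\cup V_i)\}.$$
This is visibly a $\bpi 2$-subset of $\PU(Y)$ (a countable conjunction of implications between subbasic opens), so the proposition reduces to showing $\calS=range(f)$. Since every $L\in\PU(Y)$ is compact, the defining condition of $\calS$ automatically extends from basic $B$ to arbitrary open $B$. The inclusion $range(f)\subseteq\calS$ is the easy direction: if $L=\uparw K$ (saturation taken in $Y$) for some $K\in\PU(X)$ and $L\subseteq B\cup U_i$, then $K\subseteq B\cup U_i$, and since $K\subseteq X$ every point of $K\setminus B$ lies in $U_i$ hence in $V_i$, so $K\subseteq B\cup V_i$ and therefore $L=\uparw K\subseteq B\cup V_i$.

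The main obstacle is the converse inclusion $\calS\subseteq range(f)$. Given $L\in\calS$, I would set $K=L\cap X$; one checks that $K$ is saturated in $X$, and both $\uparw K=L$ and the compactness of $K$ would follow from a single statement, \emph{Claim B}: for every open $W\subseteq Y$, $L\cap X\subseteq W$ implies $L\subseteq W$. Indeed Claim B gives $\uparw K=L$ (every open containing $K$ contains $L$) and the compactness of $K$ (a cover of $K$ extends to a cover of $L$, which has a finite subcover refining to a cover of $K$), whence $K\in\PU(X)$ and $L=f(K)\in range(f)$.

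The crux is thus Claim B, and this is where I expect the real work to lie. Suppose $C:=L\setminus W\neq\emptyset$; then $C$ is compact and, by hypothesis on $W$, $C\subseteq Y\setminus X$. The key observation is that $C$ can have no minimal element in the specialization order: given $z\in C$, pick $i$ with $z\in U_i\setminus V_i$ (possible since $z\notin X$); if $z\notin\uparw(L\setminus U_i)$, then as $L\setminus U_i$ is compact there is an open $B\supseteq L\setminus U_i$ with $z\notin B$, giving $L\subseteq B\cup U_i$ and hence, by $L\in\calS$, $L\subseteq B\cup V_i$, contradicting $z\in L\setminus(B\cup V_i)$; therefore some $w\in L\setminus U_i$ satisfies $w\leq z$, and since $W$ is an up-set this $w$ lies in $C$ and is strictly below $z$. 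On the other hand $C$ is compact, so applying the finite intersection property to the closed sets $\overline{\{t\}}\cap C$ shows that every $\leq$-chain in $C$ has a lower bound in $C$, and Zorn's lemma then forces a minimal element of $C$ — a contradiction. Hence $C=\emptyset$, proving Claim B. It is precisely this compactness-plus-minimal-element argument that takes the place of the completely Baire hypothesis used for the lower powerspace, and explains why no such hypothesis on $Y$ is needed here.
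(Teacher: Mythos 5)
Your construction coincides exactly with the paper's: the same $\bpi 2$-set $\calS$ (implications $L\in\Box(B\cup U_i)\Rightarrow L\in\Box(B\cup V_i)$ over $i\in\omega$ and $B\in\calB$), the same reduction to $\calS=range(f)$, and the same easy inclusion $range(f)\subseteq\calS$. The paper, however, only sketches this proposition and defers the hard inclusion $\calS\subseteq range(f)$ to the cited reference, whereas you supply a complete argument for it, and that argument is correct: the extension of the $\calS$-condition from $B\in\calB$ to arbitrary open $B$ is a routine compactness refinement; your Claim~B does simultaneously yield $\uparw(L\cap X)=L$ and the compactness of $L\cap X$; and the proof of Claim~B is sound --- every $z\in L\setminus W$ is forced (via the index $i$ witnessing $z\notin X$ and the $\calS$-condition applied to an open $B\supseteq L\setminus U_i$ avoiding $z$) to lie strictly above another point of $L\setminus W$, while compactness of $L\setminus W$ plus the finite intersection property applied to the closures $\overline{\{t\}}$ guarantees a minimal point if $L\setminus W$ were nonempty. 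This ``compact sets in $T_0$ spaces have minimal points'' argument is precisely the right substitute for the completely Baire hypothesis used in the lower powerspace case, and your explanation of why no such hypothesis is needed here is apt.
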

\begin{proof}
(\emph{Sketch}). Assume $Y$ is a countably based space, and let $\calB$ be a countable basis for $Y$ which is closed under finite unions. Assume $X\in \bpi 2(Y)$, and let $U_i,V_i$ $(i\in\omega)$ be open subsets of $Y$ such that $x\in X$ if and only if $(\forall i\in\omega)[x\in U_i \Rightarrow x\in V_i]$.

Let $e\colon X\to Y$ be the embedding of $X$ into $Y$, and let $f = \PU(e) \colon \PU(X) \to \PU(Y)$. By definition, $f$ maps $K\in \PU(X)$ to $\uparw K \in\PU(Y)$. It is easy to see that $f$ is a topological embedding of $\PU(X)$ into $\PU(Y)$.

Let $S$ be the subset of all $K\in \PU(Y)$ satisfying $K\in\Box(B \cup U_i) \Rightarrow K\in \Box(B \cup V_i)$ for all $i\in\omega$ and $B\in\cal B$. Clearly $S\in\bpi 2(\PU(Y))$. The theorem is then proved by showing that $S = range(f)$.
We refer to the original paper~\cite{dBSS15b} for a full proof.
\end{proof}

\begin{thm}\label{thrm:upperpowerspace}
If $X$ is quasi-Polish then $\PU(X)$ is quasi-Polish.
\end{thm}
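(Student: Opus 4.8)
The plan is to mimic the proof of Theorem~\ref{thrm:lowerpowerspace} almost verbatim, replacing the lower powerspace by the upper powerspace throughout. By the characterization of quasi-Polish spaces recalled in the Preliminaries, I may assume without loss of generality that $X$ is a $\bpi 2$-subspace of some $\omega$-algebraic domain $D$; concretely, one can always take $D = \calP(\omega)$. The goal is then to exhibit $\PU(X)$ as a $\bpi 2$-subspace of a quasi-Polish space, from which it follows immediately that $\PU(X)$ is itself quasi-Polish.

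First I would invoke the known result (due to M.~B.~Smyth~\cite{sm83} and A.~Schalk~\cite{schalk:phd}) that the Smyth powerdomain of an $\omega$-algebraic domain is again an $\omega$-algebraic domain; in particular $\PU(D)$, ordered by reverse inclusion, is an $\omega$-algebraic domain, and hence is countably based, $T_0$, and quasi-Polish. Before applying the embedding result, however, I must check that the upper Vietoris topology on $\PU(D)$ really coincides with the Scott topology with respect to which $\PU(D)$ is an $\omega$-algebraic domain. This is exactly where Proposition~\ref{prop:UpperViet=Scott} enters: since $D$ is sober and countably based, the upper Vietoris topology on $\PU(D)$ agrees with the Scott topology, so the two descriptions of $\PU(D)$ as a topological space are identical.

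With this identification in hand, I would apply Proposition~\ref{prop:upper_preserves_pi2} to the embedding $e\colon X \to D$. Since $X$ and $D$ are countably based $T_0$-spaces and $X \in \bpi 2(D)$, the induced map $\PU(e)\colon \PU(X)\to \PU(D)$ is a topological embedding of $\PU(X)$ into $\PU(D)$ as a $\bpi 2$-subspace. As $\PU(D)$ is quasi-Polish and, by the Preliminaries, every $\bpi 2$-subspace of a quasi-Polish space is again quasi-Polish, I conclude that $\PU(X)$ is quasi-Polish.

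I expect the only genuinely delicate point to be the compatibility of topologies flagged above: Proposition~\ref{prop:upper_preserves_pi2} is phrased for the upper Vietoris topology, whereas the Smyth--Schalk result supplies the $\omega$-algebraic domain $\PU(D)$ with its Scott topology, and the two must be reconciled through Proposition~\ref{prop:UpperViet=Scott}. Everything else is a direct transcription of the argument for $\PL$, so the proof can be kept very short.
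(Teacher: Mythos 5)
Your proposal is correct and is essentially the paper's own argument: the paper proves this theorem by pointing to the proof of Theorem~\ref{thrm:lowerpowerspace} and substituting the Smyth--Schalk fact that $\PU(D)$ is an $\omega$-algebraic domain, exactly as you do. Your extra remark reconciling the Scott and upper Vietoris topologies via Proposition~\ref{prop:UpperViet=Scott} is a reasonable point of care but does not change the route.
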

\begin{proof}
The proof is similar to that of Theorem~\ref{thrm:lowerpowerspace}. Note that $\PU(D)$ is an $\omega$-algebraic domain whenever $D$ is (see M.~B.~Smyth~\cite{sm83} and A.~Schalk~\cite{schalk:phd}).
\end{proof}

\section{Convex (Vietoris) powerspaces}\label{sec:convexpowerspace}


The \emph{convex} (or \emph{Vietoris}) \emph{powerspace} $\PV(X)$ is defined to be the set of \emph{lenses} in $X$ with the Vietoris topology. A subset $L\subseteq X$ is a \emph{lens} if and only if there are $A\in\PL(X)$ and $K\in\PU(X)$ such that $L=A\cap K$. The Vietoris topology is the join of the lower and upper Vietoris topologies, and is generated by sets of the form $\Diamond U = \{ L\in\PV(X) \,|\, L\cap U \not=\emptyset\}$ and $\Box U = \{ L\in\PV(X) \,|\, L\subseteq U\}$, where $U$ varies over open subsets of $X$.

\begin{thm}\label{thrm:convexpowerspace}
If $X$ is quasi-Polish then $\PV(X)$ is quasi-Polish.
\end{thm}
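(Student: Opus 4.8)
The plan is to realize $\PV(X)$ as a $\bpi 2$-subspace of the product $\PL(X)\times\PU(X)$ and then invoke Theorems~\ref{thrm:lowerpowerspace} and~\ref{thrm:upperpowerspace} together with the (finite) productivity of quasi-Polishness. Concretely, I would study the map $\iota\colon\PV(X)\to\PL(X)\times\PU(X)$ sending a lens $L$ to the pair $(Cl(L),\uparw L)$ of its closure and its saturation. The first thing to record is that a lens is recovered from this pair: if $L=A\cap K$ with $A$ closed and $K$ compact saturated, then $Cl(L)\subseteq A$ and $\uparw L\subseteq K$, so $Cl(L)\cap\uparw L\subseteq A\cap K=L\subseteq Cl(L)\cap\uparw L$, whence $L=Cl(L)\cap\uparw L$. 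In particular $\iota$ is injective.

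Next I would verify that $\iota$ is a topological embedding. For open $U$ one has $Cl(L)\cap U\neq\emptyset$ iff $L\cap U\neq\emptyset$, and $\uparw L\subseteq U$ iff $L\subseteq U$ (since $U$ is upward closed in the specialization order); hence the subbasic opens of the product pull back along $\iota$ exactly to the Vietoris generators, $\iota^{-1}(\Diamond U\times\PU(X))=\Diamond U$ and $\iota^{-1}(\PL(X)\times\Box U)=\Box U$. As the Vietoris topology is by definition the join of the lower and upper Vietoris topologies, this shows $\iota$ is an embedding onto its image. Since $\PL(X)$ and $\PU(X)$ are quasi-Polish, so is $\PL(X)\times\PU(X)$ (equivalently, using Propositions~\ref{prop:lower_preserves_pi2} and~\ref{prop:upper_preserves_pi2} I could embed this product as a $\bpi 2$-subspace of $\PL(D)\times\PU(D)$ for an $\omega$-algebraic domain $D\supseteq X$, the latter product again being an $\omega$-algebraic domain, hence completely Baire). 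Everything therefore reduces to showing $range(\iota)\in\bpi 2(\PL(X)\times\PU(X))$.

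For the range, observe that $(A,K)=\iota(L)$ for some lens $L$ if and only if $A=Cl(A\cap K)$ and $K=\uparw(A\cap K)$ (take $L=A\cap K$). Fixing a countable basis $\calB$ of $X$ closed under finite unions, and using that $A\cap K$ is compact (a closed subset of the compact set $K$) to control the second condition, I would rewrite these as: first, for every $B\in\calB$, if $A\cap B\neq\emptyset$ then $A\cap K\cap B\neq\emptyset$; second, for every $B\in\calB$, if $A\cap K\subseteq B$ then $K\subseteq B$. Each of these is a countable conjunction of implications in which one side, namely $A\cap B\neq\emptyset$ ($=\Diamond B$ on the first factor) or $K\subseteq B$ ($=\Box B$ on the second factor), is genuinely open, while the other side is one of the mixed predicates $(A,K)\mapsto[A\cap K\cap B\neq\emptyset]$ or $(A,K)\mapsto[A\cap K\subseteq B]$.

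The main obstacle is precisely these mixed predicates: neither is open in the product (the lower-Vietoris coordinate $A$ can ``wiggle'' so as to create or destroy a point of $A\cap K$ inside $B$), so the range is not visibly $\bpi 2$ from these formulas alone. I expect to handle this in the style of Proposition~\ref{prop:lower_preserves_pi2}: rather than forcing the predicates to be open, I would design a manifestly $\bpi 2$ candidate set $\calS$, built only from the genuinely open conditions $\Diamond B$ and $\Box B$ and expressing a compatibility between $A$ and $K$, prove the easy inclusion $range(\iota)\subseteq\calS$, and then establish $\calS\subseteq range(\iota)$ by a Baire-category argument. Here one exploits that $\PL(X)\times\PU(X)$, being quasi-Polish, is completely Baire, so that for $(A,K)\in\calS$ the candidate lens $L=A\cap K$ can be shown to be dense in $A$ (yielding $A=Cl(A\cap K)$) and cofinal from below in $K$ (yielding $K=\uparw(A\cap K)$), with compactness of $K$ converting these density and cofinality statements back into the open conditions defining $\calS$. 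Arranging such an $\calS$, and checking that a single Baire argument recovers both $A=Cl(A\cap K)$ and $K=\uparw(A\cap K)$ at once, is the crux of the proof; the empty lens, which is the unique pair with $A=\emptyset$ (equivalently $K=\emptyset$), should be split off and treated separately as an isolated point.
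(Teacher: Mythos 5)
Your overall strategy is exactly the paper's: identify $\PV(X)$ with the subspace of $\PL(X)\times\PU(X)$ of pairs $\langle A,K\rangle$ with $Cl_X(A\cap K)=A$ and $\uparw(A\cap K)=K$, and then show this subspace is $\bpi 2$. The embedding step and the characterization of the range are correct. However, the proposal stops precisely at what you yourself call the crux: you never exhibit the $\bpi 2$ set $\calS$, and without it there is no proof. The missing idea is the specific pair of open implications (for $U,V$ in a countable basis $\calB$ closed under finite unions): (1) $A\in\Diamond U$ and $K\in\Box V$ implies $A\in\Diamond(U\cap V)$, and (2) $K\in\Box(U\cup V)$ implies $A\in\Diamond U$ or $K\in\Box V$. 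These are genuinely implications between open subsets of the product, so $\calS$ is manifestly $\bpi 2$, and they are not merely a technical reformulation --- they are the two axiom schemas of Johnstone's Vietoris locale that integrate the $\Diamond$ and $\Box$ modalities. Finding them, rather than the surrounding scaffolding, is the content of the theorem.

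Two further points where the sketch, as written, aims at the wrong target. First, the Baire-category argument is not carried out in $\PL(X)\times\PU(X)$: the statement to be proved, that $A\cap K$ is dense in $A$, lives inside $X$, and the relevant fact is that the closed subspace $A$ of the quasi-Polish space $X$ is a Baire space. Condition (1) says that each basic open $V\supseteq K$ is dense in $A$; writing $K=\bigcap\{V\in\calB\mid K\subseteq V\}$ as a countable intersection then gives $Cl_X(A\cap K)=A$. Second, the saturation condition $K=\uparw(A\cap K)$ is not recovered by a Baire argument at all: it follows from condition (2) together with compactness of $K$ (cover $A\cap K\subseteq W$ by $U\cup V$ with $U\subseteq X\setminus A$ and $V\subseteq W$, and conclude $K\in\Box V$). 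So the expectation that ``a single Baire argument recovers both'' conditions at once would not pan out; the two halves require different mechanisms. The special treatment of the empty lens is unnecessary, as the conditions handle it uniformly.
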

\begin{proof}
The space $\PL(X)\times\PU(X)$ is quasi-Polish by Theorems~\ref{thrm:lowerpowerspace} and~\ref{thrm:upperpowerspace} and because quasi-Polish spaces are closed under topological products. Furthermore, it is easy to see that $\PV(X)$ is homeomorphic to the subspace of $\PL(X)\times\PU(X)$ consisting of all pairs $\langle A, K \rangle$ satisfying $Cl_X(A\cap K) = A$ and $\uparw (A\cap K) = K$. We can therefore identify $\PV(X)$ with this subspace of $\PL(X)\times\PU(X)$ and it only remains to show that it is a $\bpi 2$-subspace.

Let $\calB$ be a countable basis for $X$ which is closed under finite unions (hence $\calB$ contains the empty set as the empty union). We define $\calS$ to be the subset of $\PL(X)\times\PU(X)$ of all pairs $\langle A, K\rangle$ satisfying the following two conditions for each $U, V\in \calB$:
\begin{enumerate}
\item
If $A \in \Diamond U$ and $K\in \Box V$ then $A\in \Diamond (U \cap V)$, and
\item
If $K \in \Box(U \cup V)$ then either $A\in \Diamond U$ or else $K \in \Box V$.
\end{enumerate}
Since there are only countably many conditions, and each condition is an implication between open subsets (predicates), it is clear that $\calS$ is a $\bpi 2$-subset of $\PL(X)\times\PU(X)$. We conclude by proving that $\PV(X)$ is homeomorphic to $\calS$.

First, let $\langle A, K \rangle$ be any pair in $\PL(X)\times\PU(X)$ satisfying $Cl_X(A\cap K) = A$ and $\uparw (A\cap K) = K$. Fix any $U, V\in \calB$.
\begin{enumerate}
\item
If $A \in \Diamond U$ then $(A\cap K)\cap U \not=\emptyset$ because $Cl_X(A\cap K) = A$. If furthermore $K\in \Box V$ then $(A\cap K) \subseteq V$, hence $(A\cap K)\cap (U\cap V) \not=\emptyset$. Therefore, $A \in \Diamond U$ and $K\in \Box V$ implies $A\in \Diamond(U\cap V)$.
\item
If $K \in \Box(U \cup V)$ and $A\not\in \Diamond U$, then clearly $(A\cap K)\subseteq V$, hence $K\in\Box V$ because $\uparw (A\cap K) = K$.
\end{enumerate}
It follows that $\langle A, K \rangle\in \calS$.

Conversely, let $\langle A, K \rangle$ be any element of $\calS$.
\begin{enumerate}
\item
We prove that $K$ is dense in $A$, hence $Cl_X(A\cap K) = A$. Since $K$ is a compact saturated set, the assumption that $\calB$ is closed under finite unions implies the set $\calD = \{ V \in\calB \mid K \subseteq V\}$ satisfies $K=\bigcap\calD$. For each $V\in\calD$ we clearly have $K\in\Box V$, and if $U\in\calB$ is any basic open satisfying $A\in\Diamond U$, then the assumption $\langle A,K\rangle\in \calS$ implies $A\in\Diamond(U\cap V)$. Therefore, each $V\in\calD$ is dense in $A$. The countability of $\calB$ implies $\calD$ is countable, hence $K=\bigcap \calD$ is dense in $A$ because $X$ is completely Baire.
\item
If $W$ is any open set containing $A\cap K$, then the compactness of $K$ and the assumption that $\calB$ is closed under finite unions implies there exist $U,V\in\calB$ such that $U\subseteq X\setminus A$ and $V\subseteq W$ and $K\subseteq (U\cup V)$. Since $\langle A,K\rangle\in \calS$ and $A\not\in \Diamond U$, we must have $K\in\Box V$. Thus, every open set $W$ containing $A\cap K$ contains $K$, hence $\uparw (A\cap K) = K$.
\end{enumerate}
It follows that $\PV(X)$ is homeomorphic to $\calS$, hence $\PV(X)$ is quasi-Polish.
\end{proof}

The two sets of conditions used in the above proof to construct $\PV(X)$ as a $\bpi 2$-subset of $\PL(X)\times\PU(X)$ correspond to two of the axiom schemas in the locale theoretic presentation of the Vietoris locale as introduced by P.T.~Johnstone in~\cite{J85}. In particular, these are the two axiom schemas which integrate the $\Diamond$ and $\Box$ modalities.


\section{Double powerspaces and consonance}\label{sec:consonance}

In this section we will show that a topological property known as \emph{consonance} (see~\cite{dgl95,NS96,cw98,bo99}) is equivalent to the commutativity of the upper and lower powerspaces under a naturally defined homeomorphism. Our characterization was inspired by A. Bouziad's characterization of consonant spaces in~\cite{bo99}. In particular, our Theorem~\ref{thrm:consonance} is closely related to Theorem~2 in~\cite{bo99}.



\begin{lem}\label{lem:subbasisForAK_KA}
Let $X$ be a topological space.
\begin{enumerate}
\item\label{lem:item:AK}
The topology on $\PL(\PU(X))$ is generated by sets of the form
\[\Diamond\Box U = \{ \calA\in\PL(\PU(X)) \,|\, (\exists K\in \calA)\, K\subseteq U\},\]
where $U\subseteq X$ is open.
\item\label{lem:item:KA}
The topology on $\PU(\PL(X))$ is generated by sets of the form
\[\Box\Diamond U = \{ \calK\in\PU(\PL(X)) \,|\, (\forall A\in \calK)\, A\cap U\not=\emptyset\},\]
where $U\subseteq X$ is open.
\end{enumerate}
\end{lem}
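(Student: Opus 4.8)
The plan is to prove both items with the same two-step template. The easy half is immediate: $\Diamond\Box U$ is literally $\Diamond(\Box U)$ for the upper-Vietoris-open set $\Box U\subseteq\PU(X)$, and $\Box\Diamond U$ is literally $\Box(\Diamond U)$ for the lower-Vietoris-open set $\Diamond U\subseteq\PL(X)$, so each proposed generator is genuinely open and the topology it generates is contained in the powerspace topology. For the reverse containment I will rewrite an arbitrary subbasic open of the powerspace as a combination of the proposed sets, using four elementary commutation facts read off from the definitions: $\Diamond$ distributes over arbitrary unions and satisfies $\Diamond U\cup\Diamond V=\Diamond(U\cup V)$, while $\Box$ distributes over finite intersections and satisfies $\Box U\cap\Box V=\Box(U\cap V)$.

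For item~\ref{lem:item:AK}, the lower Vietoris topology on $\PL(\PU(X))$ is generated by the sets $\Diamond\calW$ with $\calW\subseteq\PU(X)$ open. Because $\Box U\cap\Box V=\Box(U\cap V)$, the sets $\Box U$ form a basis (not merely a subbasis) for $\PU(X)$, so $\calW=\bigcup_i\Box U_i$ for suitable open $U_i\subseteq X$. Since $\Diamond$ distributes over unions, $\Diamond\calW=\bigcup_i\Diamond\Box U_i$. Thus every generator of the lower Vietoris topology already lies in the topology generated by the $\Diamond\Box U$, settling this item.

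For item~\ref{lem:item:KA}, the upper Vietoris topology on $\PU(\PL(X))$ is generated by the sets $\Box\calW$ with $\calW\subseteq\PL(X)$ open. Write $\calW=\bigcup_{j\in J}B_j$ with each $B_j=\bigcap_{i\in F_j}\Diamond U_{j,i}$ a basic open of $\PL(X)$. As each $\calK\in\PU(\PL(X))$ is compact, $\calK\subseteq\bigcup_{j\in J}B_j$ holds iff $\calK\subseteq\bigcup_{j\in S}B_j$ for some finite $S\subseteq J$, so $\Box\calW=\bigcup_S\Box\bigl(\bigcup_{j\in S}B_j\bigr)$ over finite $S$. The crux is that a finite union of basic opens of $\PL(X)$ is again basic of the same shape: by distributivity $\bigcup_{j\in S}\bigcap_{i\in F_j}\Diamond U_{j,i}=\bigcap_f\bigcup_{j\in S}\Diamond U_{j,f(j)}$, where $f$ ranges over the finitely many selectors with $f(j)\in F_j$, and $\bigcup_{j\in S}\Diamond U_{j,f(j)}=\Diamond\bigl(\bigcup_{j\in S}U_{j,f(j)}\bigr)$. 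Hence $\bigcup_{j\in S}B_j=\bigcap_f\Diamond W_f$ for suitable open $W_f\subseteq X$, and since $\Box$ distributes over finite intersections, $\Box\bigl(\bigcup_{j\in S}B_j\bigr)=\bigcap_f\Box\Diamond W_f$. Combining with the compactness reduction shows $\Box\calW$ is a union of finite intersections of sets $\Box\Diamond U$, completing the item.

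I expect the main obstacle to be exactly this last step: unlike $\Diamond$, the modality $\Box$ does not commute with unions, so one cannot push $\Box$ through the union defining $\calW$. The two ingredients that rescue the argument are the compactness of elements of $\PU(\PL(X))$, which reduces the arbitrary union to a finite one, and the identity $\Diamond U\cup\Diamond V=\Diamond(U\cup V)$, which via distributivity collapses that finite union of basic opens back into a single basic open on which $\Box$ behaves well. By comparison, item~\ref{lem:item:AK} is routine, relying only on $\Diamond$ commuting with unions.
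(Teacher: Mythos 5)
Your proof is correct and follows essentially the same route as the paper's: the easy containment from $\Diamond\Box U=\Diamond(\Box U)$ and $\Box\Diamond U=\Box(\Diamond U)$, then for item~(1) the commutation of $\Diamond$ with unions and of $\Box$ with finite intersections, and for item~(2) the compactness of $\calK$ to pass to a finite union followed by the distributive rewriting with selection functions. The only cosmetic difference is that you phrase the conclusion of item~(2) as an identity $\Box\calW=\bigcup_{S}\bigcap_{f}\Box\Diamond W_f$, whereas the paper exhibits, for each $\calK\in\Box\calU$, a finite intersection of sets $\Box\Diamond W_f$ with $\calK\in\bigcap_f\Box\Diamond W_f\subseteq\Box\calU$; these are the same argument.
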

\begin{proof}

In the following, $\{ U^i_j \,|\,  i\in I \mbox{ and } j\in J_i \}$ vary over doubly indexed families of open subsets of $X$ with $J_i$ a finite index set for each $i\in I$.

Part (\ref{lem:item:AK}) easily follows from the fact that the topology of $\PL(\PU(X))$ is generated by sets of the form
\[\Diamond \calU = \Diamond\bigcup_{i\in I}\bigcap_{j\in J_i} \Box U^i_j = \bigcup_{i\in I}\Diamond\Box \bigcap_{j\in J_i}U^i_j,\]
where the second equality holds because $\Diamond$ commutes with unions and $\Box$ commutes with finite intersections.

For Part (\ref{lem:item:KA}), first note that the topology of $\PU(\PL(X))$ is generated by sets of the form
\[\Box \calU = \Box\bigcup_{i\in I}\bigcap_{j\in J_i} \Diamond U^i_j.\]
For any $\calK\in \Box\cal U$, the compactness of $\calK$ implies there is finite $F\subseteq I$ such that
\[\calK\in \Box\bigcup_{i\in F}\bigcap_{j\in J_i} \Diamond U^i_j.\]
Let $S$ be the set of all selection functions $s$ that map each $i\in F$ to some $s(i) \in J_i$. Then
\[ \Box\bigcup_{i\in F}\bigcap_{j\in J_i} \Diamond U^i_j = \Box\bigcap_{s\in S} \bigcup_{i\in F} \Diamond U^i_{s(i)}\]
by distributivity. Using again the fact that $\Diamond$ commutes with unions and $\Box$ with finite intersections, we obtain
\[\calK \in \bigcap_{s\in S} \Box\Diamond\bigcup_{i\in F} U^i_{s(i)} \subseteq \Box \calU,\]
which completes the proof.
\end{proof}

\begin{prop}
The topology on $\PU(\PL(X))$ coincides with the weak-topology.
\end{prop}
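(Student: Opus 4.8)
The plan is to prove coincidence by establishing the two inclusions between the upper Vietoris topology and the weak topology on $\PU(\PL(X))$. Recall first that the specialization order on any upper powerspace is reverse subset inclusion, so $\calK \leq \calK_0$ means $\calK \supseteq \calK_0$, and the weak topology on $\PU(\PL(X))$ is generated by the sets $\{\calK \mid \calK \not\supseteq \calK_0\}$ as $\calK_0$ ranges over $\PU(\PL(X))$. One inclusion is then free: each weak-subbasic set $\{\calK \mid \calK \not\supseteq \calK_0\}$ is the complement of $Cl(\{\calK_0\}) = \{\calK \mid \calK \leq \calK_0\}$, which is closed in any topology inducing this specialization order; hence the weak topology is contained in the upper Vietoris topology.

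For the converse I would invoke Lemma~\ref{lem:subbasisForAK_KA}(\ref{lem:item:KA}), which reduces the problem to showing that each subbasic set $\Box\Diamond U$ is weak-open. The key observation is that $\Box\Diamond U$ is in fact \emph{equal} to a single weak-subbasic open set. Put $C = X\setminus U$, a closed subset of $X$ and hence a point of $\PL(X)$, and let $\calK_0 = \uparw\{C\} = \eta^\PU_{\PL(X)}(C)$ be its saturation. Since $\calK_0$ is the saturation of a (compact) singleton, it is compact saturated and therefore a genuine element of $\PU(\PL(X))$. I claim $\Box\Diamond U = \{\calK \mid \calK \not\supseteq \calK_0\}$, which immediately exhibits $\Box\Diamond U$ as weak-open and completes the second inclusion.

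The verification of this set equality rests on two elementary facts: for a closed $A$ one has $A\cap U = \emptyset$ iff $A\subseteq C$, and every $\calK\in\PU(\PL(X))$ is saturated, i.e.\ upward closed under inclusion. For one containment, if $\calK\in\Box\Diamond U$ then every member of $\calK$ meets $U$, so $C\notin\calK$; but $C\in\calK_0$, whence $\calK_0\not\subseteq\calK$, i.e.\ $\calK\not\supseteq\calK_0$. For the other, argue by contrapositive: if $\calK\notin\Box\Diamond U$ then some $A\in\calK$ misses $U$, so $A\subseteq C$, and upward closure of $\calK$ forces every $A''\supseteq C$ into $\calK$; thus $\calK_0 = \uparw\{C\}\subseteq\calK$, i.e.\ $\calK\supseteq\calK_0$.

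I expect the only point requiring genuine care is the recognition that the correct ``test point'' is the saturation $\uparw\{X\setminus U\}$ together with the observation that it is a legitimate element of $\PU(\PL(X))$; once this is in hand, the remainder is routine bookkeeping about up-sets. Note also that this argument uses no countability or sobriety hypotheses, matching the generality of the statement.
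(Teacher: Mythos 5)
Your argument is correct and is essentially the paper's own proof: the paper likewise reduces to the subbasis $\Box\Diamond U$ from Lemma~\ref{lem:subbasisForAK_KA}(\ref{lem:item:KA}) and shows that $\Box\Diamond U$ is exactly the complement of $\dnarw\{\uparw\{(X\setminus U)\}\}$, using saturation of $\calK$ and the fact that the specialization order is reverse inclusion. The only difference is that you spell out the trivial inclusion (weak $\subseteq$ upper Vietoris), which the paper leaves implicit.
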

\begin{proof}
Let $U\subseteq X$ be open. Then $\uparw \{(X\setminus U)\}$ is in $\PU(\PL(X))$ hence the complement of $\dnarw\{\uparw \{(X\setminus U)\}\}$ is open with respect to the weak topology on $\PU(\PL(X))$. We show that $\Box \Diamond U$ is equal to the complement of $\dnarw\{\uparw \{(X\setminus U)\}\}$.

For $\calK\in\PU(\PL(X))$ we have $\calK \not\in  \Box \Diamond U$ if and only if  there exists $A\in \calK$ such that $A\subseteq (X\setminus U)$ if and only if $(X\setminus U) \in \calK$ (because $\calK$ is saturated) if and only if $\uparw \{(X\setminus U)\} \subseteq \calK$ if and only if $\calK \in \dnarw\{\uparw \{(X\setminus U)\}\}$ (because the specialization order of $\PU(\PL(X))$ is reverse subset inclusion).
\end{proof}

\begin{defi}\label{def:AK_KA}
For each topological space $X$ define $\akka_X\colon \PL(\PU(X)) \to \PU(\PL(X))$ and\\ $\kaak_X\colon \PU(\PL(X)) \to \PL(\PU(X))$ as
\begin{align*}
\akka_X(\calA) &= \{ A\in \PL(X) \,|\, (\forall K\in \calA)\, A\cap K\not=\emptyset\},\\
\kaak_X(\calK) &= \{ K\in \PU(X) \,|\, (\forall A\in \calK)\, A\cap K\not=\emptyset\}.
\tag*{\qed}
\end{align*}
\end{defi}
We will usually omit the subscripts from $\akka_X$ and $\kaak_X$ when the space is clear from the context. R.~Heckmann used similarly defined functions in~\cite{H92} to prove a commutativity result for the lower powerdomain and a modified version of the upper powerdomain.

\begin{lem}\label{lem:akka_correct}
The function $\akka$ is well-defined for each topological space $X$. Furthermore, $\akka^{-1}(\Box\Diamond U) = \Diamond \Box U$ for each open $U\subseteq X$. In particular, $\akka$ is a topological embedding.
\end{lem}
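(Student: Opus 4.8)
The plan is to verify the three assertions in turn: first that $\akka(\calA)$ is always a compact saturated subset of $\PL(X)$ (so that $\akka$ genuinely lands in $\PU(\PL(X))$), then the preimage identity, and finally the embedding claim, which will follow formally from the preimage identity together with the subbases of Lemma~\ref{lem:subbasisForAK_KA}.

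For well-definedness, saturation is immediate: the specialization order on $\PL(X)$ is subset inclusion, and if $A\in\akka(\calA)$ and $A\subseteq A'$ then $A'\cap K\supseteq A\cap K\neq\emptyset$ for every $K\in\calA$, so $A'\in\akka(\calA)$; thus $\akka(\calA)$ is upward closed, hence saturated. The substance is compactness, which I would establish with the Alexander subbase lemma applied to the subbasis $\{\Diamond U \mid U\subseteq X \text{ open}\}$ of $\PL(X)$. Suppose $\akka(\calA)\subseteq\bigcup_{U\in\calV}\Diamond U$ for a family $\calV$ of opens, and set $W=\bigcup_{U\in\calV}U$. The closed set $X\setminus W$ cannot lie in $\akka(\calA)$, since it meets no member of $\calV$ and so lies in no $\Diamond U$ of the cover; hence there is some $K_0\in\calA$ with $(X\setminus W)\cap K_0=\emptyset$, i.e. $K_0\subseteq W$. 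Now compactness of $K_0$ enters: finitely many $U_1,\dots,U_n\in\calV$ already cover $K_0$, and these yield a finite subcover, because any $A\in\akka(\calA)$ meets $K_0$ and hence meets one of $U_1,\dots,U_n$, placing $A$ in the corresponding $\Diamond U_j$. This is the step I expect to be the main obstacle, both because it is where the proof actually uses that the elements of $\calA$ are compact, and because it leans on the (choice-dependent) Alexander lemma.

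For the preimage identity, unfolding the subbasic sets via Lemma~\ref{lem:subbasisForAK_KA} reduces $\akka(\calA)\in\Box\Diamond U$ to ``$A\cap U\neq\emptyset$ for every $A\in\akka(\calA)$'' and $\calA\in\Diamond\Box U$ to ``there exists $K\in\calA$ with $K\subseteq U$''. For the direction from right to left, if $K_0\in\calA$ with $K_0\subseteq U$ then every $A\in\akka(\calA)$ meets $K_0$ and so meets $U$. For the converse I argue contrapositively: if no $K\in\calA$ satisfies $K\subseteq U$, then the closed set $X\setminus U$ meets every $K\in\calA$, so $X\setminus U\in\akka(\calA)$, yet $X\setminus U$ is disjoint from $U$, witnessing $\akka(\calA)\notin\Box\Diamond U$. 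The test set $X\setminus U$ here plays exactly the role that $X\setminus W$ played in the compactness argument, so the two parts reuse the same idea.

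Finally, the embedding claim follows formally. The identity $\akka^{-1}(\Box\Diamond U)=\Diamond\Box U$ says that the preimages under $\akka$ of the subbasic opens of $\PU(\PL(X))$ are precisely the subbasic opens of $\PL(\PU(X))$; hence $\akka$ is continuous and the topology of $\PL(\PU(X))$ is the initial topology induced by $\akka$. It remains to note injectivity: $\PL(\PU(X))$ is sober, hence $T_0$, by Proposition~\ref{prop:lowerpowerspace_sober}, and its subbasis $\{\Diamond\Box U\}$ therefore separates points; since $\calA\in\Diamond\Box U\iff\akka(\calA)\in\Box\Diamond U$, any $\calA_1,\calA_2$ with $\akka(\calA_1)=\akka(\calA_2)$ lie in exactly the same subbasic opens and so coincide. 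A continuous injection carrying the initial topology is a topological embedding, which completes the argument.
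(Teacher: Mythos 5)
Your proposal is correct and follows essentially the same route as the paper's proof: compactness of $\akka(\calA)$ via the Alexander subbase lemma using the test closed set $X\setminus\bigcup\calV$, the same two-sided argument for $\akka^{-1}(\Box\Diamond U)=\Diamond\Box U$ with $X\setminus U$ as witness, and the embedding deduced from Lemma~\ref{lem:subbasisForAK_KA}. The only differences are that you spell out saturation and the $T_0$-based injectivity argument explicitly, which the paper leaves implicit; both additions are accurate.
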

\begin{proof}
We first show that $\akka$ is well-defined. Fix $\calA\in\PL(\PU(X))$. By the Alexander subbase theorem it suffices to show that every cover $\bigcup_{i\in I} \Diamond U_i$ of $\akka(\calA)$ admits a finite subcover. Let $U=\bigcup_{i\in I} U_i$. Every $A\in \akka(\calA)$ has non-empty intersection with $U$, hence $X\setminus U$ is not in $\akka(\calA)$. By the definition of $\akka$ this implies there is $K\in \calA$ such that $K\subseteq U$. As $K$ is compact, there is finite $F\subseteq I$ such that $K\subseteq \bigcup_{i\in F} U_i$. Each $A\in \akka(\calA)$ intersects $K$, hence each $A\in \akka(\calA)$ intersects $U_i$ for some $i\in F$. Therefore, $\akka(\calA)\subseteq \bigcup_{i\in F} \Diamond U_i$. This proves that $\akka(\calA)\in \PU(\PL(X))$, hence $\akka$ is well-defined.

If $\calA \in \Diamond\Box U$ then there is $K\in\calA$ such that $K\subseteq U$. Each $A\in \akka(\calA)$ intersects $K$ hence $A\cap U\not=\emptyset$, which implies $\akka(\calA)\in \Box\Diamond U$.

Conversely, if $\akka(\calA)\in \Box\Diamond U$ then every $A\in \akka(\calA)$ has non-empty intersection with $U$, hence $X\setminus U \not\in \akka(\calA)$. This implies there exists $K\in\calA$ such that $(X\setminus U)\cap K =\emptyset$ thus $K\subseteq U$. Therefore, $\calA \in \Diamond\Box U$.

It immediately follows that $\akka$ is continuous. Furthermore, Lemma~\ref{lem:subbasisForAK_KA} and the fact that $\akka^{-1}(\Box\Diamond U) = \Diamond \Box U$ for each open $U\subseteq X$ implies that every open subset of $\PL(\PU(X))$ is equal to the preimage under $\akka$ of some open subset of $\PU(\PL(X))$. Therefore, $\akka$ is a topological embedding.
\end{proof}

\begin{lem}\label{lem:kaak_welldefined}
The function $\kaak$ is well-defined. Furthermore, $\kaak^{-1}(\Diamond\Box U)\subseteq \Box\Diamond U$ for each open $U\subseteq X$.
\end{lem}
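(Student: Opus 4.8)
The plan is to prove Lemma~\ref{lem:kaak_welldefined} by closely mirroring the structure of the proof of Lemma~\ref{lem:akka_correct}, exploiting the evident symmetry between $\akka$ and $\kaak$ as given in Definition~\ref{def:AK_KA}. The two statements to establish are: first, that $\kaak(\calK)$ is genuinely a closed subset of $\PU(X)$ (so that $\kaak$ maps into $\PL(\PU(X))$ as claimed), and second, the one-sided containment $\kaak^{-1}(\Diamond\Box U)\subseteq \Box\Diamond U$.

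For well-definedness, since $\PL(\PU(X))$ consists of closed subsets of $\PU(X)$, I first need to check that $\kaak(\calK)=\{K\in\PU(X)\mid(\forall A\in\calK)\,A\cap K\not=\emptyset\}$ is closed in the upper Vietoris topology. I would argue by showing its complement is open: if $K_0\notin\kaak(\calK)$, then there is some $A\in\calK$ with $A\cap K_0=\emptyset$, i.e. $K_0\subseteq X\setminus A$. Setting $U=X\setminus A$ (which is open since $A$ is closed), we have $K_0\in\Box U$, and I must verify that every $K\in\Box U$ also fails to be in $\kaak(\calK)$ — indeed any $K\subseteq U=X\setminus A$ has $A\cap K=\emptyset$ with the same witness $A\in\calK$, so $\Box U$ is an upper-Vietoris-open neighborhood of $K_0$ disjoint from $\kaak(\calK)$. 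This shows the complement is open, hence $\kaak(\calK)$ is closed and $\kaak$ is well-defined. (Note that, unlike for $\akka$, no compactness/Alexander-subbase argument is needed here, since being closed is a much weaker requirement than being compact saturated.)

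For the containment $\kaak^{-1}(\Diamond\Box U)\subseteq\Box\Diamond U$, I would take $\calK$ with $\kaak(\calK)\in\Diamond\Box U$ and show $\calK\in\Box\Diamond U$. By definition of $\Diamond$, the hypothesis gives some $K\in\kaak(\calK)$ with $K\subseteq U$. By the definition of $\kaak(\calK)$, this $K$ satisfies $A\cap K\not=\emptyset$ for every $A\in\calK$; combined with $K\subseteq U$, this forces $A\cap U\supseteq A\cap K\not=\emptyset$ for every $A\in\calK$, which is precisely the statement $\calK\in\Box\Diamond U$ by Lemma~\ref{lem:subbasisForAK_KA}(\ref{lem:item:KA}).

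I expect no serious obstacle in this lemma: both parts are short and essentially formal. The only point requiring minor care is the asymmetry with Lemma~\ref{lem:akka_correct} — the statement deliberately claims only the inclusion $\kaak^{-1}(\Diamond\Box U)\subseteq\Box\Diamond U$ rather than equality, and I would resist the temptation to prove the reverse inclusion, since the failure of the reverse direction (which would require a compact set $K$ witnessing $\calK\in\Box\Diamond U$ to be extracted) is exactly the phenomenon governed by consonance and will presumably be addressed separately in the subsequent development leading to Theorem~\ref{thrm:cons_AK_KA_homeo}.
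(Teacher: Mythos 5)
Your proof is correct and follows essentially the same route as the paper: well-definedness is shown by exhibiting $\Box(X\setminus A)$ as an open neighborhood of any $K\notin\kaak(\calK)$ disjoint from $\kaak(\calK)$, and the inclusion $\kaak^{-1}(\Diamond\Box U)\subseteq\Box\Diamond U$ is obtained by the same direct witness argument. Your remark about deliberately not attempting the reverse inclusion is also exactly right; that is the content of consonance in Theorem~\ref{thrm:consonance}.
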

\begin{proof}
Fix $\calK\in\PU(\PL(X))$. If $K\not\in \kaak(\calK)$ then there is $A\in \calK$ such that $A\cap K=\emptyset$. Clearly $U=X\setminus A$ is an open subset of $X$, hence $\Box U$ is an open subset of $\PU(X)$ which contains $K$ and is disjoint from $\kaak(\calK)$. Therefore, $\kaak(\calK)\in\PL(\PU(X))$.

If $\kaak(\calK)\in\Diamond\Box U$ then there is $K\in \kaak(\calK)$ with $K\subseteq U$. Every $A\in\calK$ intersects $K$ by the definition of $\kaak$, which implies every $A\in\calK$ intersects $U$. Therefore, $\calK \in \Box\Diamond U$.
\end{proof}

Unfortunately, the previous lemma does not guarantee that $\kaak$ is continuous. Our next goal is to characterize the spaces for which $\kaak$ is continuous.

Given a topological space $X$, we let $\PO(X)$ denote the lattice of open subsets of $X$ ordered by inclusion and equipped with the Scott-topology. $\PO$ determines a contravariant endofunctor on the category of topological spaces by mapping a continuous function $f\colon X\to Y$ to the corresponding frame homomorphism $\PO(f) = f^{-1} \colon \PO(Y)\to\PO(X)$, which is clearly Scott-continuous.

For $K\in\PU(X)$ we define
\[\triangledown K = \{U\in\PO(X) \mid K\subseteq U\},\]
which is known to be a Scott-open filter in $\PO(X)$.

\begin{defi}[See~\cite{dgl95}]
A space $X$ is \emph{consonant} if and only if for every Scott-open $\calH \subseteq \PO(X)$ and every $U\in\calH$ there exists $K\in\PU(X)$ such that $U \in\triangledown K \subseteq \calH$.
\qed%
\end{defi}

By the Hoffmann--Mislove theorem (see~\cite{g03}), if $X$ is sober then $X$ is consonant if and only if the Scott topology on $\PO(X)$ has a basis consisting of Scott-open filters. It was shown in~\cite{dBSS15b} that quasi-Polish spaces are consonant, and it is known that a separable co-analytic metrizable space is consonant if and only if it is Polish~\cite{bo99}. In particular, the space of rationals with the subspace topology inherited from the reals is not consonant~\cite{cw98}. 

\begin{defi}\label{def:KAOO}
For each topological space $X$ define $\kaoo_X\colon \PU(\PL(X)) \to \PO(\PO(X))$ and\\ $\ooka_X\colon \PO(\PO(X)) \to \PU(\PL(X))$ as
\begin{align*}
\kaoo_X(\calK) =& \{U\in\PO(X) \,|\, \calK\in\Box\Diamond U\},\\
\ooka_X(\calH) =& \bigcap_{U\in\calH}\Diamond U.
\tag*{\qed}
\end{align*}
\end{defi}
We will usually omit the subscripts from $\kaoo_X$ and $\ooka_X$ when the space is clear from the context.


\begin{lem}\label{lem:KAX->OOX}
The function $\kaoo$ is well-defined for each topological space $X$. 
\end{lem}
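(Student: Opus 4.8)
The plan is to verify directly that for each $\calK\in\PU(\PL(X))$ the set $\kaoo(\calK) = \{U\in\PO(X) \mid \calK\in\Box\Diamond U\}$ is a Scott-open subset of the frame $\PO(X)$, which is exactly what membership in $\PO(\PO(X))$ demands. By Lemma~\ref{lem:subbasisForAK_KA}(\ref{lem:item:KA}), the condition $\calK\in\Box\Diamond U$ unwinds to: every $A\in\calK$ satisfies $A\cap U\neq\emptyset$. I would work with this concrete reformulation throughout, and recall that Scott-openness amounts to two conditions, namely that $\kaoo(\calK)$ is an up-set and that it is inaccessible by directed joins.

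The up-set condition is immediate: if $U\in\kaoo(\calK)$ and $U\subseteq V$ are open, then every $A\in\calK$ already meets $U$ and hence meets $V$, so $V\in\kaoo(\calK)$ as well.

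The substantive step is inaccessibility by directed joins, and this is where the compactness of $\calK$ enters. Suppose $\calD\subseteq\PO(X)$ is directed under inclusion with $W = \bigcup\calD \in \kaoo(\calK)$; I must produce some $D\in\calD$ with $D\in\kaoo(\calK)$. The key observation is that $\Diamond$ commutes with arbitrary unions, so $\Diamond W = \bigcup_{D\in\calD}\Diamond D$ in $\PL(X)$. Since $W\in\kaoo(\calK)$ means precisely $\calK\subseteq \Diamond W$, the family $\{\Diamond D\}_{D\in\calD}$ is an open cover of $\calK$ in the lower Vietoris topology. Compactness of $\calK$ yields a finite subcover, and directedness of $\calD$ lets me absorb the finitely many indices into a single $D^{\ast}\in\calD$ with $\calK\subseteq\Diamond D^{\ast}$, i.e.\ $D^{\ast}\in\kaoo(\calK)$. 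This establishes Scott-openness and hence the well-definedness of $\kaoo$.

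I expect the only points requiring care to be the translation between $\Box\Diamond U$ and the condition that every $A\in\calK$ meets $U$, together with the distribution of $\Diamond$ over the directed union; once these are in hand the compactness argument is routine and mirrors the use of compactness already seen in the proof of Lemma~\ref{lem:akka_correct}. Notably, no further hypothesis on $X$ (such as consonance or a countable basis) is needed, which is consistent with the lemma being asserted for an arbitrary topological space.
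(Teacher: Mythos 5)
Your argument is correct and follows essentially the same route as the paper: both reduce well-definedness to Scott-openness of $\kaoo(\calK)$ in $\PO(X)$ and obtain it from the compactness of $\calK$ together with the fact that $\Diamond$ commutes with unions. The only cosmetic difference is that the paper works with an arbitrary family of opens and extracts a finite subfamily, whereas you restrict to directed families and absorb the finite subcover into a single member; these are interchangeable formulations of the same step.
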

\begin{proof}
Fix $\calI\subseteq\PO(X)$ and assume $U = \bigcup_{V \in \calI} V$ is in $\kaoo(\calK)$. Then $\calK \subseteq \Diamond U = \bigcup_{V\in\calI} \Diamond V$. Since $\calK$ is compact there is finite $\calF\subseteq \calI$ such that $\calK\subseteq \bigcup_{V\in \calF} \Diamond V = \Diamond \bigcup_{V\in \calF} V$. Therefore, $\bigcup_{V\in\calF} V$ is in $\kaoo(\calK)$, hence $\kaoo(\calK)$ is Scott-open.
\end{proof}



\begin{lem}\label{lem:OOX->KAX}
The function $\ooka$ is well-defined for each topological space $X$. Furthermore, $U\in\calH$ if and only if $\ooka(\calH)\in\Box\Diamond U$ for each $\calH\in\PO(\PO(X))$.
\end{lem}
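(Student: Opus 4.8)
The plan is to isolate a single set-theoretic equivalence and derive everything from it. Writing $\ooka(\calH)=\bigcap_{U\in\calH}\Diamond U$, the engine of the argument is the claim that for every open $U\subseteq X$,
\[
\ooka(\calH)\subseteq\Diamond U \iff U\in\calH.
\]
Call this equivalence $(\ast)$. It is a purely set-theoretic statement about closed subsets of $X$ and in no way presupposes that $\ooka(\calH)$ is compact, so I would establish it before addressing well-definedness; both saturation and compactness, as well as the ``furthermore'' clause, then fall out of it.

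For $(\ast)$, the implication $U\in\calH\Rightarrow\ooka(\calH)\subseteq\Diamond U$ is immediate, since $\Diamond U$ is one of the sets in the defining intersection. For the converse I would test the inclusion against the single closed set $A_0=X\setminus U$. As $A_0\cap U=\emptyset$ we have $A_0\notin\Diamond U$, so $\ooka(\calH)\subseteq\Diamond U$ forces $A_0\notin\ooka(\calH)$; unwinding the intersection produces some $W\in\calH$ with $A_0\cap W=\emptyset$, that is, $W\subseteq U$. Since $\calH$ is Scott-open in $\PO(X)$ it is upward closed under inclusion, and hence $U\in\calH$. This appeal to the upward closure of $\calH$ is the one place where Scott-openness is genuinely used.

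Given $(\ast)$, well-definedness splits into saturation and compactness. Saturation is routine: if $A\in\ooka(\calH)$ and $A\subseteq A'$, then $A'$ meets every $U\in\calH$ as well, so $\ooka(\calH)$ is upward closed under the specialization order of $\PL(X)$ (which is subset inclusion) and is therefore saturated. For compactness I would use the Alexander subbase theorem exactly as in Lemma~\ref{lem:akka_correct}, reducing an arbitrary subbasic cover $\ooka(\calH)\subseteq\bigcup_{i\in I}\Diamond V_i$ to a finite one. Since $\Diamond$ commutes with unions, this cover equals $\Diamond V$ for $V=\bigcup_{i\in I}V_i$, so $\ooka(\calH)\subseteq\Diamond V$ and $(\ast)$ yields $V\in\calH$.

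The one delicate point is converting $V\in\calH$ into a finite subcover, and this is where Scott-openness is used a second time. The finite subunions $\bigcup_{i\in F}V_i$, with $F\subseteq I$ finite, form a directed family in $\PO(X)$ whose union is $V\in\calH$, so Scott-openness of $\calH$ provides a finite $F$ with $\bigcup_{i\in F}V_i\in\calH$. Applying the easy direction of $(\ast)$ and again that $\Diamond$ commutes with unions gives $\ooka(\calH)\subseteq\Diamond\bigcup_{i\in F}V_i=\bigcup_{i\in F}\Diamond V_i$, the required finite subcover; hence $\ooka(\calH)\in\PU(\PL(X))$. Finally, the ``furthermore'' is just a restatement of $(\ast)$: now that $\ooka(\calH)$ is known to be compact saturated, $\ooka(\calH)\in\Box\Diamond U$ unwinds by the definition in Lemma~\ref{lem:subbasisForAK_KA} to $\ooka(\calH)\subseteq\Diamond U$, which by $(\ast)$ holds exactly when $U\in\calH$. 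I expect the compactness argument, specifically the interplay between the Alexander reduction and the Scott-openness of $\calH$ through $(\ast)$, to be the main obstacle; the remaining steps are formal.
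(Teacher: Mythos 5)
Your proposal is correct and follows essentially the same route as the paper: both prove the equivalence $\ooka(\calH)\subseteq\Diamond U\iff U\in\calH$ first (testing against the closed set $X\setminus U$ and using that $\calH$ is an upper set), and both then derive compactness by collapsing a subbasic cover $\bigcup_{i}\Diamond V_i$ to $\Diamond\bigcup_i V_i$ and extracting a finite subfamily via Scott-openness. Your explicit treatment of saturation and your explicit appeal to the Alexander subbase theorem are details the paper leaves implicit, but the argument is the same.
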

\begin{proof}
We first prove the second claim. Clearly, if $U\in\calH$ then $\ooka(\calH)\subseteq \Diamond U$. Conversely, $\ooka(\calH) \subseteq\Diamond U$ implies $(X\setminus U) \not\in\ooka(\calH)$ hence there is $V\in\calH$ such that $(X\setminus U)\not\in \Diamond V$. Therefore, $V\subseteq U$ which implies $U\in\calH$ because $\calH$ is an upper set.

Next we show that $\ooka(\calH)$ is compact. Fix $\calI\subseteq\PO(X)$ and assume $\ooka(\calH)\subseteq \bigcup_{V\in\calI}\Diamond V$. By setting $U=\bigcup_{V\in\calI}V$ we obtain $\ooka(\calH) \in \Box\Diamond U$ hence $U\in \calH$. Since $\calH$ is Scott-open there is finite $\calF\subseteq\calI$ such that $\bigcup_{V\in\calF}V \in \calH$. It follows that every $A\in\ooka(\calH)$ has non-empty intersection with some $V\in\calF$ hence $\ooka(\calH)\subseteq \bigcup_{V\in\calF}\Diamond V$.
\end{proof}




A locale theoretic version of the following was first shown by S.~Vickers~\cite{V04} and S.~Vickers and C.~Townsend~\cite{VT04}. Recall that $\PU(X)$ is ordered by reverse subset inclusion.

\begin{thm}\label{thrm:KA_OO_iso}
$\PU(\PL(X))$ and $\PO(\PO(X))$ are isomorphic lattices (via $\kaoo$ and its inverse $\ooka$) for every topological space $X$.
\end{thm}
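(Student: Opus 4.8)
The plan is to prove the two composite identities $\kaoo\circ\ooka = \mathrm{id}$ and $\ooka\circ\kaoo = \mathrm{id}$, and then observe that both maps are monotone for the relevant orders (reverse subset inclusion on $\PU(\PL(X))$, subset inclusion on $\PO(\PO(X))$). Since mutually inverse monotone maps constitute an order isomorphism, and an order isomorphism automatically preserves all existing meets and joins, this will exhibit the two posets as isomorphic lattices. Both $\kaoo$ and $\ooka$ are already known to be well defined by Lemmas~\ref{lem:KAX->OOX} and~\ref{lem:OOX->KAX}, so only the composites and monotonicity remain.

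The identity $\kaoo\circ\ooka = \mathrm{id}$ falls out directly from Lemma~\ref{lem:OOX->KAX}: for $\calH\in\PO(\PO(X))$ and open $U\subseteq X$ we have $U\in\kaoo(\ooka(\calH))$ iff $\ooka(\calH)\in\Box\Diamond U$ iff $U\in\calH$, whence $\kaoo(\ooka(\calH)) = \calH$. For the other composite, fix $\calK\in\PU(\PL(X))$ and unwind definitions: $U\in\kaoo(\calK)$ means $\calK\subseteq\Diamond U$, i.e. every $A\in\calK$ meets $U$, and $\ooka(\kaoo(\calK)) = \{A\in\PL(X)\mid A\cap U\neq\emptyset \text{ for all } U\in\kaoo(\calK)\}$. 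The inclusion $\calK\subseteq\ooka(\kaoo(\calK))$ is routine, since $A\in\calK$ meets every $U\in\kaoo(\calK)$ by the very definition of $\kaoo(\calK)$.

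The reverse inclusion $\ooka(\kaoo(\calK))\subseteq\calK$ is the crux, and it is the only place where the compact-saturated structure of $\calK$ is genuinely used. I would argue contrapositively: suppose $A\notin\calK$. Because the specialization order on $\PL(X)$ is subset inclusion and $\calK$ is saturated (hence an upper set for $\subseteq$), no $A'\in\calK$ can satisfy $A'\subseteq A$; for otherwise $A'\leq A$ with $A'\in\calK$ would force $A\in\calK$. Consequently every $A'\in\calK$ meets the open set $U = X\setminus A$, so $\calK\subseteq\Diamond U$, i.e. $U\in\kaoo(\calK)$. But $A\cap U = \emptyset$, so $A\notin\ooka(\kaoo(\calK))$, establishing $\ooka(\kaoo(\calK))\subseteq\calK$.

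It remains to check monotonicity. If $\calK_1\supseteq\calK_2$ (that is, $\calK_1\leq\calK_2$ in reverse inclusion), then any $U$ with $\calK_1\subseteq\Diamond U$ also satisfies $\calK_2\subseteq\Diamond U$, so $\kaoo(\calK_1)\subseteq\kaoo(\calK_2)$; and if $\calH_1\subseteq\calH_2$ then $\ooka(\calH_1) = \bigcap_{U\in\calH_1}\Diamond U \supseteq \bigcap_{U\in\calH_2}\Diamond U = \ooka(\calH_2)$. Thus both maps are order-preserving, and being mutually inverse they form an order isomorphism, hence a lattice isomorphism; since $\PO(\PO(X))$ is a frame, this transfers a complete lattice structure to $\PU(\PL(X))$. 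The step requiring the most care is the reverse inclusion $\ooka(\kaoo(\calK))\subseteq\calK$, as it is where saturation of $\calK$ is the essential ingredient rather than mere formal manipulation of the definitions.
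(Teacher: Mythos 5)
Your proof is correct and follows essentially the same route as the paper: well-definedness via Lemmas~\ref{lem:KAX->OOX} and~\ref{lem:OOX->KAX}, monotonicity, and the two composite identities, with $\kaoo\circ\ooka=\mathrm{id}$ obtained exactly as in the paper from Lemma~\ref{lem:OOX->KAX}. The only (harmless) divergence is in verifying $\ooka\circ\kaoo=\mathrm{id}$: the paper checks that $\ooka(\kaoo(\calK))$ and $\calK$ lie in the same subbasic opens $\Box\Diamond U$ and invokes Lemma~\ref{lem:subbasisForAK_KA}(\ref{lem:item:KA}) together with saturation, whereas you argue element-wise, using that $\calK$ is an upper set for subset inclusion to produce the separating open $\Diamond(X\setminus A)$ --- a slightly more self-contained verification of the same fact.
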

\begin{proof}
It is easy to see that $\kaoo\colon\PU(\PL(X))\to\PO(\PO(X))$ and $\ooka\colon \PO(\PO(X)) \to \PU(\PL(X))$ are order-preserving. For $\calH\in\PO(\PO(X))$ and $U\in\PO(X)$ we have
\begin{eqnarray*}
U\in\kaoo(\ooka(\calH)) &\iff& \ooka(\calH)\in\Box\Diamond U\,\text{ (by definition of $\kaoo$)}\\
&\iff& U\in\calH \,\text{ (by Lemma~\ref{lem:OOX->KAX})}.
\end{eqnarray*}
Furthermore, for $\calK\in\PU(\PL(X))$ we have
\begin{eqnarray*}
\ooka(\kaoo(\calK)) \in\Box\Diamond U &\iff& U\in\kaoo(\calK)\,\text{ (by Lemma~\ref{lem:OOX->KAX})}\\
&\iff& \calK\in\Box\Diamond U\,\text{ (by definition of $\kaoo$)},
\end{eqnarray*}
thus Lemma~\ref{lem:subbasisForAK_KA}(\ref{lem:item:KA}) implies $\ooka\circ\kaoo$ is the identity on $\PU(\PL(X))$. Therefore, $\kaoo$ and $\ooka$ are inverses of each other. 
\end{proof}

We can say more when $X$ is countably based.

\begin{thm}\label{thrm:scott_top_basis_for_OOX}
If $X$ is a countably based space then $\PU(\PL(X))$ with the upper Vietoris topology is homeomorphic (via $\kaoo$ and its inverse $\ooka$) to $\PO(\PO(X))$ with the Scott topology. The Scott topology on $\PO(\PO(X))$ has a subbasis given by sets of the form
\[\boxtimes U = \{\calH\in\PO(\PO(X)) \mid U\in \calH\}\]
for open $U\subseteq X$.
\end{thm}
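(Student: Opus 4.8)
The plan is to deduce the homeomorphism from the order isomorphism already established in Theorem~\ref{thrm:KA_OO_iso}, once we know that the upper Vietoris topology on $\PU(\PL(X))$ is in fact the Scott topology. The key structural observation is that any order isomorphism between two posets is automatically a homeomorphism for the associated Scott topologies: the Scott topology is defined purely in terms of the order (upper sets and directed suprema), and an order isomorphism preserves directed sets and all existing suprema in both directions, hence carries Scott-open sets to Scott-open sets both ways. Since $\PO(\PO(X))$ carries the Scott topology by definition, it therefore suffices to show that the upper Vietoris topology on $\PU(\PL(X))$ coincides with its Scott topology (under reverse inclusion).

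First I would verify the hypotheses of Proposition~\ref{prop:UpperViet=Scott} for the space $\PL(X)$. It is sober by Proposition~\ref{prop:lowerpowerspace_sober}. It is also countably based: if $\calB$ is a countable basis for $X$, then $\{\Diamond B \mid B\in\calB\}$ is a countable subbasis for the lower Vietoris topology, because $\Diamond$ commutes with unions and every open $U\subseteq X$ is a union of members of $\calB$, so $\Diamond U=\bigcup\{\Diamond B\mid B\in\calB,\ B\subseteq U\}$. With $\PL(X)$ sober and countably based, Proposition~\ref{prop:UpperViet=Scott} shows that the upper Vietoris topology on $\PU(\PL(X))$ agrees with the Scott topology.

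Combining this with Theorem~\ref{thrm:KA_OO_iso} and the observation above, $\kaoo$ (with inverse $\ooka$) is a homeomorphism from $\PU(\PL(X))$ onto $\PO(\PO(X))$. For the subbasis claim I would compute directly, using only the definition of $\kaoo$, that $\kaoo^{-1}(\boxtimes U)=\{\calK\mid U\in\kaoo(\calK)\}=\{\calK\mid \calK\in\Box\Diamond U\}=\Box\Diamond U$ for each open $U\subseteq X$; equivalently $\kaoo(\Box\Diamond U)=\boxtimes U$ by bijectivity. Since $\{\Box\Diamond U \mid U\subseteq X\text{ open}\}$ is a subbasis for the topology on $\PU(\PL(X))$ by Lemma~\ref{lem:subbasisForAK_KA}(\ref{lem:item:KA}), and a homeomorphism carries a subbasis of its domain to a subbasis of its codomain, the family $\{\boxtimes U \mid U\subseteq X\text{ open}\}$ is a subbasis for the Scott topology on $\PO(\PO(X))$.

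There is no serious obstacle once this reduction is set up; the only point requiring a small separate argument is that $\PL(X)$ is countably based (so that Proposition~\ref{prop:UpperViet=Scott} applies), and the conceptual crux is simply the recognition that the purely order-theoretic isomorphism of Theorem~\ref{thrm:KA_OO_iso} is already topological once both sides are viewed with the Scott topology.
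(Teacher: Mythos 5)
Your proposal is correct and follows essentially the same route as the paper: apply Proposition~\ref{prop:UpperViet=Scott} to the sober, countably based space $\PL(X)$ to identify the upper Vietoris and Scott topologies on $\PU(\PL(X))$, then upgrade the order isomorphism of Theorem~\ref{thrm:KA_OO_iso} to a homeomorphism and transport the subbasis $\Box\Diamond U$ to $\boxtimes U$. Your explicit checks that $\PL(X)$ is countably based and that order isomorphisms are Scott homeomorphisms are details the paper leaves implicit, but the argument is the same.
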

\begin{proof}
Since $\PL(X)$ is always sober, Proposition~\ref{prop:UpperViet=Scott} implies the upper Vietoris topology and Scott topology coincide for $\PU(\PL(X))$. Theorem~\ref{thrm:KA_OO_iso} then implies $\PU(\PL(X))$ and $\PO(\PO(X))$ are homeomorphic. The topology on $\PU(\PL(X))$ is generated by sets of the form $\Box\Diamond U$, hence the topology on $\PO(\PO(X))$ is generated by sets of the form $\boxtimes U = \ooka^{-1}(\Box\Diamond U)$.
\end{proof}

\begin{lem}\label{lem:kaoo_and_kaak}
Let $X$ be a topological space. Then $\triangledown K \subseteq \kaoo(\calK)$ if and only if $K \in \kaak(\calK)$ for each $\calK\in\PU(\PL(X))$ and $K\in\PU(X)$.
\end{lem}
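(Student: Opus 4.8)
The plan is to unwind both sides into elementary statements about open sets and then observe that they are essentially contrapositives of one another; nothing beyond the definitions is needed. Recall that $\triangledown K \subseteq \kaoo(\calK)$ says precisely that for every open $U$ with $K \subseteq U$ we have $\calK \in \Box\Diamond U$, i.e. every $A \in \calK$ meets $U$; whereas $K \in \kaak(\calK)$ says that every $A \in \calK$ meets $K$ itself. The proof then splits into the two implications.

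For the ($\Leftarrow$) direction I would assume $K \in \kaak(\calK)$ and take any open $U \in \triangledown K$, so that $K \subseteq U$. Each $A \in \calK$ satisfies $A \cap K \neq \emptyset$ by hypothesis, and since $K \subseteq U$ this yields $A \cap U \neq \emptyset$; hence $\calK \in \Box\Diamond U$, that is $U \in \kaoo(\calK)$. As $U$ was an arbitrary element of $\triangledown K$, this gives $\triangledown K \subseteq \kaoo(\calK)$.

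For the ($\Rightarrow$) direction I would assume $\triangledown K \subseteq \kaoo(\calK)$, fix $A \in \calK$, and argue by contradiction that $A \cap K \neq \emptyset$. If instead $A \cap K = \emptyset$, then $U := X \setminus A$ is open (as $A$ is closed) and contains $K$, so $U \in \triangledown K \subseteq \kaoo(\calK)$, giving $\calK \in \Box\Diamond U$ and in particular $A \cap U \neq \emptyset$; but $A \cap (X \setminus A) = \emptyset$, a contradiction. Thus every $A \in \calK$ meets $K$, i.e. $K \in \kaak(\calK)$.

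The only point requiring any choice is the witness $U = X \setminus A$ in the reverse direction, which converts the hypothesis about all open supersets of $K$ into the desired intersection condition. Compactness and saturation of $K$ enter only implicitly through the standing assumption $K \in \PU(X)$, so I expect no genuine obstacle here; the lemma is a direct bookkeeping argument against the definitions.
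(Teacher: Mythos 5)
Your proof is correct and follows essentially the same route as the paper: the easy direction is identical, and your contradiction argument in the forward direction uses the same key witness $U = X\setminus A$ that the paper uses (phrased there contrapositively rather than by contradiction). No gaps.
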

\begin{proof}
Assume $\triangledown K \subseteq \kaoo(\calK)$. If $A\in\calK$ then $\calK\not\subseteq \Diamond(X\setminus A)$ hence $(X\setminus A)\not\in \kaoo(\calK)$. Thus $K\not\subseteq  (X\setminus A)$ which implies $K\cap A \not=\emptyset$. Therefore, $K\in\kaak(\calK)$.

Conversely, assume $K\in\kaak(\calK)$ and $K\subseteq U$. Each $A\in\calK$ intersects $K$ hence $A\cap U\not=\emptyset$, which implies $\calK\in\Box\Diamond U$. Therefore, $U\in\kaoo(\calK)$.
\end{proof}

The next result is inspired by A.~Bouziad's Theorem~2 in~\cite{bo99}.

\begin{thm}\label{thrm:consonance}
The following are equivalent for every topological space $X$:
\begin{enumerate}
\item\label{enum:cons}
$X$ is consonant,
\item\label{enum:akka_surj}
$\akka\colon \PL(\PU(X)) \to \PU(\PL(X))$ is a bijection,
\item\label{enum:kaak_eq}
$\kaak^{-1}(\Diamond\Box U) = \Box \Diamond U$ for each open $U\subseteq X$.
\end{enumerate}
\end{thm}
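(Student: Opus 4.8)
The plan is to prove the cycle $(\ref{enum:cons}) \Rightarrow (\ref{enum:kaak_eq}) \Rightarrow (\ref{enum:akka_surj}) \Rightarrow (\ref{enum:cons})$, using the lattice isomorphism $\kaoo\colon \PU(\PL(X)) \to \PO(\PO(X))$ of Theorem~\ref{thrm:KA_OO_iso} to transport every question about $\PU(\PL(X))$ into the frame $\PO(\PO(X))$, where consonance is naturally phrased. The computational heart of the argument is the identity
\[\kaoo(\akka(\calA)) = \bigcup_{K\in\calA} \triangledown K \qquad (\calA\in\PL(\PU(X))),\]
which follows by chasing definitions: $U\in\kaoo(\akka(\calA))$ means $\akka(\calA)\in\Box\Diamond U$, which by Lemma~\ref{lem:akka_correct} is equivalent to $\calA\in\Diamond\Box U$, i.e.\ by Lemma~\ref{lem:subbasisForAK_KA}(\ref{lem:item:AK}) to the existence of $K\in\calA$ with $K\subseteq U$, i.e.\ with $U\in\triangledown K$. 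Thus the range of $\akka$, viewed through $\kaoo$, is exactly the collection of Scott-open sets expressible as unions of filters $\triangledown K$, and consonance is precisely the assertion that \emph{every} Scott-open subset of $\PO(X)$ has this form.

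For $(\ref{enum:cons})\Rightarrow(\ref{enum:kaak_eq})$, fix an open $U$. The inclusion $\kaak^{-1}(\Diamond\Box U)\subseteq\Box\Diamond U$ is Lemma~\ref{lem:kaak_welldefined}, so only the reverse inclusion needs consonance. Given $\calK\in\Box\Diamond U$, the definition of $\kaoo$ gives $U\in\kaoo(\calK)$, and $\kaoo(\calK)$ is Scott-open by Lemma~\ref{lem:KAX->OOX}. Consonance supplies $K\in\PU(X)$ with $U\in\triangledown K\subseteq\kaoo(\calK)$; by Lemma~\ref{lem:kaoo_and_kaak} the inclusion $\triangledown K\subseteq\kaoo(\calK)$ gives $K\in\kaak(\calK)$, while $U\in\triangledown K$ says $K\subseteq U$. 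Hence $\kaak(\calK)$ meets $\Box U$, i.e.\ $\calK\in\kaak^{-1}(\Diamond\Box U)$.

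For $(\ref{enum:kaak_eq})\Rightarrow(\ref{enum:akka_surj})$, recall from Lemma~\ref{lem:akka_correct} that $\akka$ is injective (being an embedding), so it suffices to prove surjectivity, and the natural candidate preimage of $\calK$ is $\kaak(\calK)\in\PL(\PU(X))$ (well-defined by Lemma~\ref{lem:kaak_welldefined}). The displayed identity together with Lemma~\ref{lem:kaoo_and_kaak} yields $\kaoo(\akka(\kaak(\calK))) = \bigcup\{\triangledown K \mid K\in\kaak(\calK)\} = \bigcup\{\triangledown K \mid \triangledown K\subseteq\kaoo(\calK)\}$. The inclusion into $\kaoo(\calK)$ is automatic, and condition~(\ref{enum:kaak_eq}), read through the same translations (a point $\calK\in\Box\Diamond U$ lies in $\kaak^{-1}(\Diamond\Box U)$ exactly when some $K$ satisfies $U\in\triangledown K\subseteq\kaoo(\calK)$), forces the reverse inclusion. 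Since $\kaoo$ is injective by Theorem~\ref{thrm:KA_OO_iso}, $\kaoo(\akka(\kaak(\calK)))=\kaoo(\calK)$ gives $\akka(\kaak(\calK))=\calK$. For $(\ref{enum:akka_surj})\Rightarrow(\ref{enum:cons})$, let $\calH\subseteq\PO(X)$ be Scott-open and $U\in\calH$. Writing $\calH=\kaoo(\calK)$ with $\calK=\ooka(\calH)$ (Theorem~\ref{thrm:KA_OO_iso}) and then $\calK=\akka(\calA)$ by surjectivity, the displayed identity gives $\calH=\bigcup_{K\in\calA}\triangledown K$; choosing $K\in\calA$ with $U\in\triangledown K$ yields $U\in\triangledown K\subseteq\calH$, as required.

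The main obstacle here is conceptual rather than technical: recognizing that consonance, the surjectivity of $\akka$, and the equality in~(\ref{enum:kaak_eq}) are three faces of the single statement that every Scott-open subset of $\PO(X)$ is a union of filters $\triangledown K$. Once this is seen, the work reduces to keeping the four parallel translations---between $\calK\in\Box\Diamond U$, $U\in\kaoo(\calK)$, $K\in\kaak(\calK)$, and $\triangledown K\subseteq\kaoo(\calK)$---correctly aligned, so that the isomorphism $\kaoo$ can convert equalities of compact sets of closed sets into equalities of Scott-open sets.
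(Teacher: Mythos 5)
Your proposal is correct and rests on essentially the same machinery as the paper's proof --- the same four-way translation between $\calK\in\Box\Diamond U$, $U\in\kaoo(\calK)$, $K\in\kaak(\calK)$, and $\triangledown K\subseteq\kaoo(\calK)$ via Lemmas~\ref{lem:akka_correct}, \ref{lem:kaak_welldefined}, \ref{lem:kaoo_and_kaak} and Theorem~\ref{thrm:KA_OO_iso} --- with the only difference being that you traverse the cycle as $(1)\Rightarrow(3)\Rightarrow(2)\Rightarrow(1)$ where the paper does $(1)\Rightarrow(2)\Rightarrow(3)\Rightarrow(1)$. The organizing identity $\kaoo(\akka(\calA))=\bigcup_{K\in\calA}\triangledown K$ is a pleasant way to make explicit that all three conditions say that every Scott-open subset of $\PO(X)$ is a union of filters $\triangledown K$, but each individual implication unwinds to the same chains of equivalences the paper writes out.
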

\begin{proof}
(\ref{enum:cons} $\Rightarrow$~\ref{enum:akka_surj}). Assume $X$ is consonant. We already showed that $\akka$ is injective in Lemma~\ref{lem:akka_correct}, so it only remains to show that $\akka$ is a surjection. Fix $\calK\in\PU(\PL(X))$. For each $U\in\PO(X)$ we have
\begin{eqnarray*}
\calK \in\Box\Diamond U &\iff& U\in\kaoo(\calK)\\
&\iff& (\exists K\in\PU(X))[U\in \triangledown K \subseteq \kaoo(\calK)\,]\text{ (by consonance)}\\
&\iff& (\exists K\in\kaak(\calK))\, K\subseteq U\,\text{ (by Lemma~\ref{lem:kaoo_and_kaak})}\\
&\iff& \kaak(\calK) \in \Diamond\Box U\\
&\iff& \akka(\kaak(\calK)) \in \Box\Diamond U\,\text{ (by Lemma~\ref{lem:akka_correct})},
\end{eqnarray*}
hence $\calK = \akka(\kaak(\calK))$. Therefore, $\akka$ is surjective.

(\ref{enum:akka_surj} $\Rightarrow$~\ref{enum:kaak_eq}). Fix any $\calK \in \Box\Diamond U$ and let $\calA\in\PL(\PU(X))$ be such that $\akka(\calA)=\calK$. Lemma~\ref{lem:akka_correct} implies $\calA \in \Diamond\Box U$, so there is some $K\in\calA$ such that $K\subseteq U$. Since $\akka(\calA)=\calK$, every $A\in\calK$ has non-empty intersection with $K$, which implies $K\in \kaak(\calK)$. Clearly $K\in \kaak(\calK)$ demonstrates that $\kaak(\calK)\in\Diamond\Box U$. Therefore, $\Box \Diamond U \subseteq \kaak^{-1}(\Diamond\Box U)$, and the converse inclusion is by Lemma~\ref{lem:kaak_welldefined}.

(\ref{enum:kaak_eq} $\Rightarrow$~\ref{enum:cons}). If $\calH\subseteq\PO(X)$ is Scott-open and $U\in\calH$ then $\ooka(\calH)\in \Box\Diamond U$ by Lemma~\ref{lem:OOX->KAX}. The assumption on $\kaak$ implies $\kaak(\ooka(\calH)) \in \Diamond\Box U$, hence there is $K\in \kaak(\ooka(\calH))$ with $K\subseteq U$. By the definition of $\kaak$, every $A\in\ooka(\calH)$ has non-empty intersection with $K$. It follows that whenever $V\subseteq X$ is open and $K\subseteq V$ then $\ooka(\calH)\in\Box\Diamond V$, hence $V\in \calH$ by Lemma~\ref{lem:OOX->KAX}. Therefore, $X$ is consonant.
\end{proof}




Lemma~\ref{lem:akka_correct} and Theorem~\ref{thrm:consonance} together imply the following.

\begin{thm}\label{thrm:cons_AK_KA_homeo}
If $X$ is consonant then $\PL(\PU(X))$ and $\PU(\PL(X))$ are homeomorphic (via $\akka$ and its inverse $\kaak$).
\qed%
\end{thm}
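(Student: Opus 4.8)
The plan is to read this off directly from the two cited results, treating it as a corollary. First I would invoke Lemma~\ref{lem:akka_correct}, which tells us that $\akka\colon \PL(\PU(X)) \to \PU(\PL(X))$ is a topological embedding; in particular $\akka$ is injective, continuous, and a homeomorphism onto its image, with the explicit preimage formula $\akka^{-1}(\Box\Diamond U) = \Diamond\Box U$. Since $X$ is consonant, the implication (\ref{enum:cons} $\Rightarrow$ \ref{enum:akka_surj}) of Theorem~\ref{thrm:consonance} gives that $\akka$ is a bijection, hence in particular surjective. A surjective topological embedding is exactly a homeomorphism, so I would conclude that $\akka$ is a homeomorphism.

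Next I would identify the inverse explicitly as $\kaak$, rather than merely as some abstract inverse map. Inspecting the chain of equivalences used to prove (\ref{enum:cons} $\Rightarrow$ \ref{enum:akka_surj}), one sees it actually establishes $\calK = \akka(\kaak(\calK))$ for every $\calK\in\PU(\PL(X))$, i.e. $\akka\circ\kaak$ is the identity on $\PU(\PL(X))$. Since $\akka$ is a bijection, composing on the left with $\akka^{-1}$ forces $\kaak = \akka^{-1}$. Continuity of $\kaak$ then comes for free as the inverse of a homeomorphism; alternatively, it is precisely the content of (\ref{enum:cons} $\Rightarrow$ \ref{enum:kaak_eq}), since $\kaak^{-1}(\Diamond\Box U) = \Box\Diamond U$ is open and, by Lemma~\ref{lem:subbasisForAK_KA}(\ref{lem:item:AK}), the sets $\Diamond\Box U$ generate the topology on $\PL(\PU(X))$.

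There is essentially no serious obstacle at this level: all the real work has already been carried out in Lemma~\ref{lem:akka_correct} (the embedding together with its preimage behaviour) and in the surjectivity half of Theorem~\ref{thrm:consonance}, which is where consonance is genuinely used, via Lemma~\ref{lem:kaoo_and_kaak} and the lattice isomorphism $\kaoo$/$\ooka$ of Theorem~\ref{thrm:KA_OO_iso}. The only point demanding any care is confirming that the map realizing the bijection is the \emph{same} $\kaak$ from Definition~\ref{def:AK_KA}, so that the homeomorphism is witnessed by the two naturally defined functions and not just abstractly; but this is immediate from the displayed identity $\akka(\kaak(\calK)) = \calK$. Hence $\akka$ and $\kaak$ are mutually inverse homeomorphisms, as claimed.
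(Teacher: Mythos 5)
Your argument is correct and follows exactly the route the paper intends: the paper states this theorem as an immediate consequence of Lemma~\ref{lem:akka_correct} (the embedding with $\akka^{-1}(\Box\Diamond U)=\Diamond\Box U$) and Theorem~\ref{thrm:consonance} (surjectivity from consonance, with the displayed identity $\akka(\kaak(\calK))=\calK$ identifying $\kaak$ as the inverse), giving it no separate proof. Your additional care in pinning down that the inverse is the concrete map $\kaak$ of Definition~\ref{def:AK_KA} is exactly the right point to check and is handled correctly.
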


\begin{cor}\label{cor:cntblybasedcons_AK_KA_OO}
If $X$ is a countably based consonant space, then $\PL(\PU(X))$, $\PU(\PL(X))$ and $\PO(\PO(X))$ are all homeomorphic. Their topologies coincide with both the weak topology and the Scott topology.
\qed%
\end{cor}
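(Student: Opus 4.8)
The plan is to obtain the three homeomorphisms by chaining together the two isomorphism theorems already proved, and then to establish the coincidence of topologies at a single ``hub'' space and transport it to the other two.

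First I would note that since $X$ is consonant, Theorem~\ref{thrm:cons_AK_KA_homeo} gives a homeomorphism $\akka\colon\PL(\PU(X))\to\PU(\PL(X))$ with inverse $\kaak$. Since $X$ is moreover countably based, Theorem~\ref{thrm:scott_top_basis_for_OOX} gives a homeomorphism $\kaoo\colon\PU(\PL(X))\to\PO(\PO(X))$ with inverse $\ooka$. Composing, all three spaces $\PL(\PU(X))$, $\PU(\PL(X))$, and $\PO(\PO(X))$ are homeomorphic.

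Next I would pin down the topology of the hub $\PU(\PL(X))$. Since $\PL(X)$ is always sober (Proposition~\ref{prop:lowerpowerspace_sober}) and is countably based whenever $X$ is (the sets $\Diamond B$, for $B$ ranging over a countable basis of $X$, form a countable subbasis because $\Diamond$ commutes with unions), Proposition~\ref{prop:UpperViet=Scott} applies to $\PL(X)$ in place of $X$ and shows that the upper Vietoris topology on $\PU(\PL(X))$ coincides with its Scott topology. That this topology also coincides with the weak topology is exactly the (unlabelled) proposition establishing that the topology on $\PU(\PL(X))$ is the weak topology. Hence on the hub the natural topology, the weak topology, and the Scott topology all agree.

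Finally I would transport this coincidence. All three spaces are $T_0$ (indeed sober), so the homeomorphisms $\akka$ and $\kaoo$ are isomorphisms of the underlying specialization orders. Because the weak topology and the Scott topology of a poset are each determined solely by the order, an order-isomorphism carries the weak (respectively Scott) topology of one space onto that of the other; combined with the fact that $\akka$ and $\kaoo$ are homeomorphisms for the natural topologies, the equality ``natural $=$ weak $=$ Scott'' proved for $\PU(\PL(X))$ transfers to $\PL(\PU(X))$ and to $\PO(\PO(X))$. As a consistency check, for $\PL(\PU(X))$ the identity natural $=$ weak is also immediate from Proposition~\ref{prop:lowerpowerspace_sober}, and for $\PO(\PO(X))$ the natural topology is the Scott topology by definition. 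The main point requiring care, and the only place the argument is more than bookkeeping, is this transfer step: one must check that the weak and Scott topologies are order-invariant and that a homeomorphism of $T_0$ spaces is an order-isomorphism, so that a single coincidence at the hub suffices for all three spaces.
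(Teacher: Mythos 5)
Your proposal is correct and follows essentially the same route the paper intends: the corollary is stated without proof precisely because it is the composite of Theorem~\ref{thrm:cons_AK_KA_homeo}, Theorem~\ref{thrm:scott_top_basis_for_OOX}, Proposition~\ref{prop:lowerpowerspace_sober}, Proposition~\ref{prop:UpperViet=Scott} applied to the sober, countably based space $\PL(X)$, and the unlabelled proposition identifying the topology of $\PU(\PL(X))$ with the weak topology. Your explicit transfer step (homeomorphisms of $T_0$ spaces are specialization-order isomorphisms, and the weak and Scott topologies are order-invariant) is exactly the bookkeeping the paper leaves implicit, and it is sound.
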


In particular, if $X$ is quasi-Polish, then $\PL(\PU(X))$, $\PU(\PL(X))$ and $\PO(\PO(X))$ are all naturally homeomorphic and quasi-Polish. The naturality of the transformations $\akka$, $\kaak$, $\kaoo$, and $\ooka$ is easily verified by noting how the modality operators on basic open sets change under preimages of the relevant maps. Similarly, it can be shown that $\akka$ and $\kaak$ are distributive laws (in the sense of Beck~\cite{be69}) between the monads $\PL$ and $\PU$ on the category of quasi-Polish spaces.  The locale theoretic version of this observation has already been made by S.~Vickers in~\cite{V04}. Currently we do not know if $\PL$ and $\PU$ preserve consonance, but if $\PL$ and $\PU$ are well-defined monads on the category of consonant spaces then $\akka$ and $\kaak$ are distributive laws in this more general setting as well.

It is important to note that $\PO(X)$ will not be quasi-Polish in general, as we will show in the following proposition. Parts of the proof depend on results we will obtain in later sections, but it is useful to give the proposition now so that the reader can better appreciate the motivation of those sections. The equivalences as described below depend on the sobriety of $X$, and are a special case of the more general observation by Matthias Schr\"{o}der (personal communication) that if $X$ is countably based then $\PO(X)$ (with the Scott-topology) is countably based if and only if $X$ is core-compact.

\begin{prop}
The following are equivalent for every quasi-Polish space $X$:
\begin{enumerate}
\item\label{prop:enum:OXcntbased1}
$X$ is locally compact,
\item\label{prop:enum:OXcntbased2}
$\PO(X)$ is quasi-Polish,
\item\label{prop:enum:OXcntbased3}
$\PO(X)$ is countably based,
\item\label{prop:enum:OXcntbased4}
$\PO(X)$ is locally compact,
\item\label{prop:enum:OXcntbased5}
$\PO(\PO(X))$ is locally compact.
\end{enumerate}
\end{prop}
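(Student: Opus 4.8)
The plan is to prove the two cycles $(\ref{prop:enum:OXcntbased1})\Rightarrow(\ref{prop:enum:OXcntbased2})\Rightarrow(\ref{prop:enum:OXcntbased3})\Rightarrow(\ref{prop:enum:OXcntbased1})$ and $(\ref{prop:enum:OXcntbased1})\Rightarrow(\ref{prop:enum:OXcntbased4})\Rightarrow(\ref{prop:enum:OXcntbased5})\Rightarrow(\ref{prop:enum:OXcntbased1})$, leaning on three standard facts about continuous lattices: \textbf{(a)} a sober space $Y$ is locally compact iff it is core-compact iff $\PO(Y)$ is a continuous lattice (\cite{g03,GL13}); \textbf{(b)} the Scott topology of a continuous lattice is locally compact and sober; and \textbf{(c)} a retract of a continuous lattice, in the category of dcpos and Scott-continuous maps, is again a continuous lattice (\cite{g03}). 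Since a quasi-Polish space is sober and countably based, \textbf{(a)} applies to $X$ and to every sober space built from it. For $(\ref{prop:enum:OXcntbased1})\Leftrightarrow(\ref{prop:enum:OXcntbased3})$ I would invoke the Schr\"{o}der observation recalled above: for countably based $X$, $\PO(X)$ is countably based iff $X$ is core-compact, i.e.\ locally compact. For $(\ref{prop:enum:OXcntbased1})\Rightarrow(\ref{prop:enum:OXcntbased2})$, local compactness makes $\PO(X)$ a continuous lattice by \textbf{(a)} and countably based by $(\ref{prop:enum:OXcntbased3})$, hence an $\omega$-continuous lattice, which with the Scott topology is quasi-Polish (every $\omega$-continuous domain is quasi-Polish~\cite{dbr}); and $(\ref{prop:enum:OXcntbased2})\Rightarrow(\ref{prop:enum:OXcntbased3})$ is immediate.

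The chain $(\ref{prop:enum:OXcntbased1})\Rightarrow(\ref{prop:enum:OXcntbased4})\Rightarrow(\ref{prop:enum:OXcntbased5})$ is then purely formal. If $X$ is locally compact, $\PO(X)$ is a continuous lattice by \textbf{(a)} and hence locally compact by \textbf{(b)}, giving $(\ref{prop:enum:OXcntbased4})$. If $\PO(X)$ is locally compact it is core-compact, so $\PO(\PO(X))$ is a continuous lattice by \textbf{(a)} and therefore locally compact by \textbf{(b)}, giving $(\ref{prop:enum:OXcntbased5})$.

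The real work is $(\ref{prop:enum:OXcntbased5})\Rightarrow(\ref{prop:enum:OXcntbased1})$, and here I would combine the homeomorphism $\PO(\PO(X))\cong\PU(\PL(X))$ of Theorem~\ref{thrm:scott_top_basis_for_OOX} with the following descent lemma: \emph{if $Z$ is sober and $\PU(Z)$ (resp.\ $\PL(Z)$) is core-compact, then $Z$ is locally compact.} To prove it, observe that core-compactness of $\PU(Z)$ means $\PO(\PU(Z))$ is a continuous lattice, and that $\PO(Z)$ sits inside it as a Scott-continuous retract: take the section $s\colon U\mapsto\Box U$ and the retraction $r=(\eta^\PU_Z)^{-1}$, which are order- and directed-join-preserving ($\Box$ commutes with directed unions by compactness, and $r$ is a frame homomorphism) and satisfy $r(s(U))=(\eta^\PU_Z)^{-1}(\Box U)=U$. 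By \textbf{(c)}, $\PO(Z)$ is a continuous lattice, so $Z$ is locally compact by \textbf{(a)}. The $\PL$ case is identical, using $s\colon U\mapsto\Diamond U$, $r=(\eta^\PL_Z)^{-1}$, and $(\eta^\PL_Z)^{-1}(\Diamond U)=U$. Now $(\ref{prop:enum:OXcntbased5})$ says $\PU(\PL(X))$ is locally compact, hence core-compact; the lemma applied with $Z=\PL(X)$ (sober by Theorem~\ref{thrm:lowerpowerspace}) shows $\PL(X)$ is locally compact, hence core-compact; applying the lemma a second time with $Z=X$ shows $X$ is locally compact, closing the cycle.

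The step I expect to be the main obstacle is exactly this descent lemma. In general a complete lattice whose Scott space is merely locally compact need not be a continuous lattice, so one cannot directly read off continuity of $\PU(\PL(X))\cong\PO(\PO(X))$ as a \emph{lattice} from the purely topological hypothesis $(\ref{prop:enum:OXcntbased5})$. What rescues the argument is that $\PO(Z)$ is always a Scott-continuous retract of $\PO(\PU(Z))$ and of $\PO(\PL(Z))$ through the powerspace units, so continuity does transfer downward along these particular maps. This is the one point that genuinely uses the powerspace structure rather than generic lattice theory, and it is where care is needed to verify that $s$ and $r$ are Scott-continuous and that $r\circ s$ is the identity.
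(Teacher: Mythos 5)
Your second cycle is sound, but the first one has a genuine gap at (\ref{prop:enum:OXcntbased3}) $\Rightarrow$ (\ref{prop:enum:OXcntbased1}). The Schr\"{o}der observation you invoke is reported in the paper only as a personal communication, and the proposition you are proving is explicitly described there as a special case of it; appealing to it for the one hard direction of (\ref{prop:enum:OXcntbased1}) $\Leftrightarrow$ (\ref{prop:enum:OXcntbased3}) therefore amounts to assuming (a generalization of) what is to be shown. The direction (\ref{prop:enum:OXcntbased1}) $\Rightarrow$ (\ref{prop:enum:OXcntbased3}) is easy (by consonance the sets $\triangledown K$, for $K$ ranging over a suitable countable family of compact sets, form a countable subbasis of $\PO(X)$), but (\ref{prop:enum:OXcntbased3}) $\Rightarrow$ (\ref{prop:enum:OXcntbased1}) needs an actual argument. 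The paper's proof of it is genuinely nontrivial: it uses the fact that $X$ and $\PO(X)$ are sequential (qcb) spaces to conclude that the membership relation $E=\{(x,U)\mid x\in U\}$ is sequentially open in $X\times\PO(X)$, hence open once $X\times\PO(X)$ is countably based, and then extracts $x\in W\subseteq K\subseteq U$ from a basic neighbourhood $W\times\triangledown K\subseteq E$ using consonance. Nothing in your proposal substitutes for this step, and your descent lemma cannot, since countable basedness of $\PO(X)$ supplies no core-compactness hypothesis to descend from.

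The rest is correct and takes a genuinely different route from the paper, which instead proves (\ref{prop:enum:OXcntbased1}) $\Rightarrow$ everything, then (\ref{prop:enum:OXcntbased4}) $\Rightarrow$ (\ref{prop:enum:OXcntbased1}) directly (via a forward reference to Corollary~\ref{cor:compact_intersections}) and (\ref{prop:enum:OXcntbased5}) $\Rightarrow$ (\ref{prop:enum:OXcntbased3}) via an embedding of $\PO(X)$ into $\PO(\PO(\PO(X)))$ borrowed from~\cite{dBSS15b}. Your descent lemma --- that $\PO(Z)$ is a Scott-continuous retract of $\PO(\PU(Z))$ (resp.\ $\PO(\PL(Z))$) via $U\mapsto\Box U$ and ${(\eta^{\PU}_Z)}^{-1}$ (resp.\ $U\mapsto\Diamond U$ and ${(\eta^{\PL}_Z)}^{-1}$), so that continuity of the open-set lattice descends along the powerspace units --- is correct: the sections preserve directed joins (by compactness of the members of $\PU(Z)$ in the upper case, trivially in the lower case), the retractions are frame homomorphisms, retracts of continuous lattices under Scott-continuous maps are continuous, and the Hofmann--Lawson theorem converts core-compactness back to local compactness for sober $Z$. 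Combined with $\PO(\PO(X))\cong\PU(\PL(X))$ and two applications of the lemma (first $Z=\PL(X)$, then $Z=X$, both sober), this closes (\ref{prop:enum:OXcntbased5}) $\Rightarrow$ (\ref{prop:enum:OXcntbased1}) more self-containedly than the paper does; likewise your route to (\ref{prop:enum:OXcntbased2}) through $\omega$-continuity of the lattice $\PO(X)$ avoids the paper's forward reference to Corollary~\ref{cor:OX_sober}. If you supply a real proof of (\ref{prop:enum:OXcntbased3}) $\Rightarrow$ (\ref{prop:enum:OXcntbased1}), the whole argument stands.
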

\begin{proof}
(\ref{prop:enum:OXcntbased1} $\Rightarrow$~\ref{prop:enum:OXcntbased2},~\ref{prop:enum:OXcntbased3},~\ref{prop:enum:OXcntbased4},~\ref{prop:enum:OXcntbased5}). If  $X$ is locally compact then $\PO(X)$ is a continuous lattice, hence locally compact with respect to the Scott-topology (see~\cite{g03}). Furthermore, since $X$ is also countably based, there exists countable $\calB\subseteq \PU(X)$ such that whenever $x \in U$ there is $K\in \calB$ with $x \in Int_X(K)\subseteq K\subseteq U$, where $Int_X(\cdot)$ is the interior operator for $X$. Since $X$ is consonant, one easily verifies that $\{ \triangledown K \mid K\in\calB\}$ is a countable basis for $\PO(X)$. We will see later (Corollary~\ref{cor:OX_sober}) that $\PO(X)$ is sober, and every countably based locally compact sober space is quasi-Polish~\cite{dbr}. Repeating the above argument for $\PO(X)$ shows that $\PO(\PO(X))$ is locally compact.

(\ref{prop:enum:OXcntbased2} $\Rightarrow$~\ref{prop:enum:OXcntbased3}) holds by definition.

(\ref{prop:enum:OXcntbased3} $\Rightarrow$~\ref{prop:enum:OXcntbased1}). Assume $\PO(X)$ is countably based. Both $X$ and $\PO(X)$ are sequential spaces, hence $E=\{ (x, U) \mid x\in U\}$ is a sequentially open subset of $X \times \PO(X)$ (see items (1) and (3) of Proposition~2.2 in~\cite{Sch:HMT}). Therefore, $E$ is open because $X \times \PO(X)$ is countably based. Now using the fact that $X$ is consonant, given any pair $(x,U)\in X\times\PO(X)$, we have that $x\in U$ if and only if there exists $W\in \PO(X)$ and $K\in\PU(X)$ with $(x,U) \in W\times \triangledown K \subseteq E$, which is possible if and only if $x \in W \subseteq K \subseteq U$. Therefore, $X$ is locally compact.

(\ref{prop:enum:OXcntbased4} $\Rightarrow$~\ref{prop:enum:OXcntbased1}). Assume $\PO(X)$ is locally compact. If $U$ is an open neighborhood of $x\in X$ then the local compactness of $\PO(X)$ and the consonance of $X$ implies there are $\calK\in\PU(\PO(X))$ and $K\in\PU(X)$ with $U\in\triangledown K \subseteq \calK \subseteq \triangledown\uparw\{x\}$. We will show later (Corollary~\ref{cor:compact_intersections}) that $V = \bigcap\calK$ is open, and it is easy to verify that $x \in V \subseteq K \subseteq U$. Therefore, $X$ is locally compact.

(\ref{prop:enum:OXcntbased5} $\Rightarrow$~\ref{prop:enum:OXcntbased3}). Assume $\PO(\PO(X))$ is locally compact. $\PO(\PO(X))$ is quasi-Polish by our assumption on $X$, so the implication (\ref{prop:enum:OXcntbased1} $\Rightarrow$~\ref{prop:enum:OXcntbased3}) applies and $\PO(\PO(\PO(X)))$ is countably based. Since $X$ is countably based it is a qcb-space,\footnote{A \emph{q}uotient of a \emph{c}ountably \emph{b}ased space (see~\cite{BSS07} for a survey).} hence Proposition~2.2(6) in~\cite{Sch:HMT} implies $\PO(X)$ is a sequential space, and it follows that $\PO(X)$ topologically embeds into $\PO(\PO(\PO(X)))$ by Corollary~6.4 of~\cite{dBSS15b}. Therefore, $\PO(X)$ is countably based.
\end{proof}

From the above proposition, we see that $\PO(\PO(\omega^{\omega}))$, where $\omega^{\omega}$ is the Baire space, is an example of a complete lattice which is quasi-Polish but not locally compact when given the Scott topology.



\section{Co-consonance and strongly compact sets}\label{sec:coconsonance}

In this section we gather together some results which will be useful when we later investigate the powerspace monads on $\PO(X)$.

For $A\in\PL(X)$ we define
\[\triangleup A = \{U\in\PO(X) \mid A\cap U \not=\emptyset\}.\]
Since $\triangleup A$ equals the complement of $\dnarw\{(X\setminus A)\}\in\PL(\PO(X))$, it is clear that $\triangleup A$ is a Scott-open subset of $\PO(X)$.

\begin{defi}
A space $X$ is \emph{co-consonant} if and only if for every Scott-open $\calH \subseteq \PO(X)$ and every $U\in\calH$ there exists finite $\calF \subseteq \PL(X)$ such that $U \in \bigcap_{A\in\calF}\triangleup A \subseteq \calH$.
\qed%
\end{defi}

Equivalently, $X$ is co-consonant if and only if the weak topology and Scott topology on $\PO(X)$ agree.

Thus, a space $X$ is consonant if the range of the mapping
\[\triangledown\colon \PU(X) \to \PO(\PO(X)),\quad K \mapsto \triangledown K\]
is a subbase for the Scott topology on $\PO(X)$, and $X$ is co-consonant if the range of the mapping
\[\triangleup \colon \PL(X) \to \PO(\PO(X)),\quad A \mapsto \triangleup A\]
is a subbase. Both of these mappings are continuous embeddings when $X$ is countably based.

\begin{prop}\label{prop:cntbsd_OX_cons_cocons}
If $X$ is a countably based space then $\PO(X)$ is both consonant and co-consonant.
\end{prop}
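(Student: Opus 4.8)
The plan is to read off both properties from the explicit description of the Scott topology on $\PO(\PO(X))$ supplied by Theorem~\ref{thrm:scott_top_basis_for_OOX}. Since $X$ is countably based, that theorem gives a subbasis for this topology consisting of the sets $\boxtimes U = \{\calU \in \PO(\PO(X)) \mid U \in \calU\}$ with $U \in \PO(X)$. Consonance and co-consonance of $\PO(X)$ amount, respectively, to the ranges of $\triangledown\colon \PU(\PO(X)) \to \PO(\PO(\PO(X)))$ and $\triangleup\colon \PL(\PO(X)) \to \PO(\PO(\PO(X)))$ being subbases for the Scott topology on $\PO(\PO(X))$, so it suffices to exhibit each $\boxtimes U$ simultaneously as a single $\triangledown$-set and as a single $\triangleup$-set.

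Concretely, I would establish the two identities
\[\boxtimes U = \triangledown(\uparw\{U\}) = \triangleup(\dnarw\{U\}),\]
where $\uparw\{U\}$ is the saturation of the point $U$ in $\PO(X)$ (a principal compact saturated set, hence a member of $\PU(\PO(X))$) and $\dnarw\{U\} = Cl_{\PO(X)}(\{U\})$ is the principal closed set (a member of $\PL(\PO(X))$). Both identities reduce to the fact that elements of $\PO(\PO(X))$ are upper sets in the inclusion order on $\PO(X)$: for Scott-open $\calV \subseteq \PO(X)$ one has $\uparw\{U\} \subseteq \calV$ iff $U \in \calV$, and likewise $\dnarw\{U\}\cap\calV\not=\emptyset$ (i.e. some $W \subseteq U$ lies in $\calV$) iff $U \in \calV$.

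Granting these identities, each $\triangledown\frakK$ and each $\triangleup\frakA$ is Scott-open in $\PO(\PO(X))$, and the family of all sets of each kind contains the subbasis $\{\boxtimes U \mid U \in \PO(X)\}$; hence both families are themselves subbases, giving consonance and co-consonance at once. Equivalently, one may verify the definitions directly: given a Scott-open $\calH \ni \calU$, choose $U_1,\dots,U_n$ with $\calU \in \bigcap_{i} \boxtimes U_i \subseteq \calH$, and then $\frakK = \uparw\{U_1,\dots,U_n\}$ witnesses consonance via $\triangledown\frakK = \bigcap_{i}\boxtimes U_i$, while $\{\dnarw\{U_i\}\}_{i}$ witnesses co-consonance. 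There is no substantial obstacle once Theorem~\ref{thrm:scott_top_basis_for_OOX} is available: the only points requiring care are the bookkeeping that $\uparw\{U\}$ is compact saturated and $\dnarw\{U\}$ is closed, and the repeated use of the upper-set property of Scott-open subsets of $\PO(X)$ to pass between the statement $U \in \calV$ and the corresponding statements about $\uparw\{U\}$ and $\dnarw\{U\}$.
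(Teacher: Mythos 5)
Your proof is correct and follows essentially the same route as the paper's: both invoke Theorem~\ref{thrm:scott_top_basis_for_OOX} to reduce everything to the subbasic sets $\boxtimes U$, and then witness consonance via $\triangledown(\uparw\{U_0,\dots,U_n\})$ and co-consonance via the identity $\boxtimes U = \triangleup(\dnarw\{U\})$. The supporting observations (that $\uparw\{U\}$ is compact saturated, $\dnarw\{U\}$ is closed, and Scott-open subsets of $\PO(X)$ are upper sets) are exactly the ones the paper uses implicitly.
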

\begin{proof}
Let $\frakU\subseteq \PO(\PO(X))$ be Scott-open and assume $\calH\in\frakU$. Since $X$ is countably based, Theorem~\ref{thrm:scott_top_basis_for_OOX} implies there are $U_0,\ldots, U_n \in\PO(X)$ such that $\calH \in \boxtimes U_0 \cap \cdots \cap \boxtimes U_n \subseteq \frakU$. Then $\frakK = \uparw\{U_0,\ldots,U_n\}$ is in $\PU(\PO(X))$ and $\calH\in \triangledown\frakK\subseteq\frakU$, which shows that $\PO(X)$ is consonant. Furthermore, $\dnarw\{U_i\}\in\PL(\PO(X))$ and $\boxtimes U_i = \triangleup\dnarw\{U_i\}$, which shows that $\PO(X)$ is co-consonant.
\end{proof}




Next we give a characterization of co-consonant spaces in terms of their compact subsets.

\begin{defi}[R.~Heckmann~\cite{H92}]
A compact subset $K$ of a space $X$ is \emph{strongly compact} if and only if for every open $U\subseteq X$ with $K\subseteq U$, there is finite $F\subseteq X$ such that $K\subseteq \uparw F \subseteq U$.
\qed%
\end{defi}

\begin{prop}\label{prop:coconsonant_strongcompact}
If $X$ is co-consonant then every $K\in\PU(X)$ is strongly compact. The converse holds if $X$ is consonant.
\end{prop}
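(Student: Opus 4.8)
The plan is to prove the two implications separately, in both cases exploiting the pairing between the Scott-open filter $\triangledown K$ and the subbasic sets $\triangleup A$, together with the elementary fact that every open subset of $X$ is upward-closed in the specialization order (so $x\in V$ and $x\leq y$ force $y\in V$ whenever $V$ is open). This single fact yields the two order-theoretic translations I will lean on repeatedly: for open $V$ and a point $x$, one has $\dnarw\{x\}\cap V\neq\emptyset$ if and only if $x\in V$; and $F\subseteq V$ implies $\uparw F\subseteq V$ for any set $F$.

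For the forward direction, assume $X$ is co-consonant and fix $K\in\PU(X)$ together with an open $U\supseteq K$. Since $\triangledown K$ is a Scott-open subset of $\PO(X)$ and $U\in\triangledown K$, co-consonance supplies a finite $\calF\subseteq\PL(X)$ with $U\in\bigcap_{A\in\calF}\triangleup A\subseteq\triangledown K$. From $U\in\triangleup A$ I pick a point $x_A\in A\cap U$ for each $A\in\calF$ and set $F=\{x_A\mid A\in\calF\}$, a finite subset of $U$; upward-closure of $U$ then gives $\uparw F\subseteq U$. The remaining inclusion $K\subseteq\uparw F$ I will establish by contradiction: if some $z\in K$ lies outside $\uparw F$, then $x_A\not\leq z$ for every $A\in\calF$, and the open set $V=X\setminus\dnarw\{z\}$ satisfies $z\notin V$, hence $K\not\subseteq V$, i.e. $V\notin\triangledown K$. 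The inclusion $\bigcap_{A\in\calF}\triangleup A\subseteq\triangledown K$ then forces $A\cap V=\emptyset$, i.e. $A\subseteq\dnarw\{z\}$, for some $A$; but $x_A\in A$ yields $x_A\leq z$, a contradiction. Thus $K\subseteq\uparw F\subseteq U$ and $K$ is strongly compact.

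For the converse, assume $X$ is consonant and that every $K\in\PU(X)$ is strongly compact. Given a Scott-open $\calH\subseteq\PO(X)$ and $U\in\calH$, consonance produces $K\in\PU(X)$ with $U\in\triangledown K\subseteq\calH$; in particular $K\subseteq U$. Strong compactness of $K$ then yields a finite $F\subseteq X$ with $K\subseteq\uparw F\subseteq U$. I will take $\calF=\{\dnarw\{x\}\mid x\in F\}\subseteq\PL(X)$, each $\dnarw\{x\}$ being the closure of a singleton and hence closed. Using the translation $\dnarw\{x\}\cap V\neq\emptyset\iff x\in V$, membership $V\in\bigcap_{A\in\calF}\triangleup A$ is exactly the condition $F\subseteq V$, which by upward-closure gives $\uparw F\subseteq V$ and hence $K\subseteq V$, i.e. $V\in\triangledown K$; so $\bigcap_{A\in\calF}\triangleup A\subseteq\triangledown K\subseteq\calH$. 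The membership $U\in\bigcap_{A\in\calF}\triangleup A$ is immediate since $F\subseteq U$. This witnesses co-consonance.

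Both arguments are short, and the only genuinely delicate point is the choice of the detecting open set in the forward direction: to certify $z\notin\uparw F$ one must produce an open set that separates $z$ from the candidate generators, and $V=X\setminus\dnarw\{z\}$ is precisely the right one, since $A\subseteq\dnarw\{z\}$ is equivalent to $A\cap V=\emptyset$ and captures the relation $x_A\leq z$ that drives the contradiction. Everything else reduces to the standard interplay between open sets and the specialization order, so I expect no serious obstacle beyond getting these order-theoretic translations exactly right.
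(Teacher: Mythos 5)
Your proof is correct and follows essentially the same route as the paper's: the same choice of witnesses $x_A\in A\cap U$ and the detecting open $X\setminus\dnarw\{z\}$ in the forward direction, and the same family $\{\dnarw\{x\}\mid x\in F\}$ in the converse. The only difference is that you phrase the inclusion $K\subseteq\uparw F$ as a contradiction where the paper argues it directly, which is purely cosmetic.
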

\begin{proof}
Assume $X$ is co-consonant and $K\in\PU(X)$. If $U\subseteq X$ is open and $K\subseteq U$, then $\triangledown K$ is a Scott-open subset of $\PO(X)$ containing $U$, hence by co-consonance there is finite $\calF \subseteq \PL(X)$ such that $U \in \bigcap_{A\in\calF}\triangleup A \subseteq \triangledown K$. Choose $x_A \in U\cap A$ for each $A\in\calF$ and let $F=\{x_A\mid A\in\calF\}$. For any $y\in K$ the set $V = X\setminus \dnarw \{y\}$ is an open set which does not contain $K$, hence $V\not\in \bigcap_{A\in\calF}\triangleup A$. It follows that there is some $A\in\calF$ such that $A\subseteq \dnarw \{y\}$, hence $x_A\leq y$. Therefore, $K\subseteq \uparw F\subseteq U$.

For the converse, assume $X$ is consonant and every compact subset of $X$ is strongly compact. If $U\subseteq X$ is open and $K\in\PU(X)$ is such that $U\in\triangledown K$, then by strong compactness of $K$ there is finite $F$ such that $K\subseteq \uparw F\subseteq U$. Then $U \in \bigcap_{x\in F}\triangleup (\dnarw \{x\}) \subseteq \triangledown K$, hence $X$ is co-consonant.
\end{proof}

In the following sections we will need to use the fact that $\PU(X)$ is co-consonant whenever $X$ is quasi-Polish. This will easily follow from the fact that every quasi-Polish space is a Wilker space, which is an important result in its own right.

\begin{defi}[P.~Wilker~\cite{W70}]
A space is a \emph{Wilker space} if and only if for every pair of opens $U_1$ and $U_2$ and compact set $K\subseteq U_1 \cup U_2$, there exist compact sets $K_1\subseteq U_1$ and $K_2\subseteq U_2$ such that $K\subseteq K_1\cup K_2$.
\qed%
\end{defi}

Every Hausdorff space and every locally compact space is a Wilker space (see the original paper~\cite{W70} for a proof and other applications, where this property is called condition $(D)$).

We now work towards proving that every quasi-Polish space is a Wilker space. The following definition is due to V.~Becher and S.~Grigorieff~\cite{BG15}. Note however that we use the opposite of the relations defined in that paper so that they will be compatible with subset inclusion. We also explicitly include uniqueness in the fourth item of the definition because we are only concerned with $T_0$-spaces.

\begin{defi}
Let $X$ be a topological space. A \emph{convergent approximation relation} for $X$ is a binary relation $\prec$ on some basis $\calB\subseteq\PO(X)$ such that for all $U,V,W\in\calB$:
\begin{enumerate}
\item
$U\prec V$ implies $U\subseteq V$,
\item
$U \prec V \subseteq W$ implies $U\prec W$,
\item
For each $x\in U$ there is $O\in\calB$ such that $x\in O \prec U$,
\item
Every sequence ${(U_i)}_{i\in\omega}$ in $\calB$ satisfying $(\forall i\in\omega)[\, U_{i+1}\prec U_i \,]$ is a neighborhood basis for a unique $x \in \bigcap_{i\in\omega}U_i$.
\end{enumerate}
A topological space $X$ is a \emph{convergent approximation space} if and only if there is a convergent approximation relation $\prec$ for $X$ on some basis $\calB$.
\qed%
\end{defi}

As shown in~\cite{BG15}, if $\calB$ and $\calD$ are bases for $X$ and $\prec$ is a convergent approximation relation on $\calB$, then we can obtain a convergent approximation relation $\prec'$ on $\calD$ by defining
\[O\prec' W \iff (\exists U, V\in\calB)[\, O\subseteq U \prec V \subseteq W\,]\]
for $O,W\in\calD$.

For simplicity, in this paper we will only consider convergent approximation relations on $X$ for $\calB=\PO(X)$.

\begin{prop}[V.~Becher and S.~Grigorieff~\cite{BG15}]
A countably based $T_0$-space is quasi-Polish if and only if it is a convergent approximation space.
\qed%
\end{prop}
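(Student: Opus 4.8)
The plan is to prove the two implications separately, using the two characterizations of quasi-Polish spaces recalled in the Preliminaries: the Smyth-complete quasi-metric for the direction ``quasi-Polish $\Rightarrow$ convergent approximation space,'' and the embedding as a $\bpi 2$-subset of $\calP(\omega)$ for the converse.

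For the forward direction, suppose $X$ is quasi-Polish, with a Smyth-complete quasi-metric $d$ inducing the topology; write $B(x,\epsilon)=\{y\mid d(x,y)<\epsilon\}$. On $\calB=\PO(X)$ I would define $U\prec W$ to hold iff there exist $x\in X$ and a rational $\epsilon>0$ with $U\subseteq B(x,\epsilon)$ and $B(x,2\epsilon)\subseteq W$; the factor $2$ builds in the ``room to spare'' that a way-below-style relation needs. Conditions (1) and (2) of the definition are then immediate (the doubling clause survives enlarging $W$), and condition (3) follows by shrinking a ball around a given point $x\in U$. The real work is condition (4): every $\prec$-decreasing sequence $(U_i)$ must be a neighbourhood basis for a unique point of $\bigcap_i U_i$. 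From $U_{i+1}\prec U_i$ I extract witnesses $x_i,\epsilon_i$ with $U_{i+1}\subseteq B(x_i,\epsilon_i)$ and $B(x_i,2\epsilon_i)\subseteq U_i$; the nesting $B(x_{i+1},2\epsilon_{i+1})\subseteq U_{i+1}\subseteq B(x_i,\epsilon_i)$ makes $(x_i)$ forward-Cauchy, so Smyth-completeness yields a limit $x$ onto which the balls collapse, giving both the neighbourhood-basis property and uniqueness (the latter from $T_0$-separation). I expect this to be the main obstacle: one must check that the doubling genuinely forces the covering radii to shrink, so that $(x_i)$ is Cauchy rather than merely nested, and it may be cleanest to refine the definition of $\prec$ (e.g.\ tying the admissible radius to a fixed summable scale) so that Smyth-completeness applies without fuss. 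This is exactly the step where completeness of the quasi-metric is indispensable.

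For the converse, suppose $X$ is countably based $T_0$ and carries a convergent approximation relation $\prec$ on $\PO(X)$. Using the transfer principle recalled above, I pass to a convergent approximation relation $\prec'$ on a fixed countable basis $\calD=\{B_n\}_{n\in\omega}$. I embed $X$ into $\calP(\omega)$ (the powerset with the Scott topology, an $\omega$-algebraic domain) by $\iota(x)=\{n\mid x\in B_n\}$; since $\calD$ is a basis and $X$ is $T_0$ this is a topological embedding, as $\iota^{-1}(\{S\mid n\in S\})=B_n$. It then suffices to show $\iota(X)\in\bpi 2(\calP(\omega))$. I would characterise the image as the sets $S$ that are neighbourhood filters of points, expressed by countably many implications between Scott-open predicates: (i) $n\in S$ and $B_n\subseteq B_m$ imply $m\in S$; (ii) $n,m\in S$ imply $k\in S$ for some $k$ with $B_k\subseteq B_n\cap B_m$; and (iii) $n\in S$ implies $k\in S$ for some $k$ with $B_k\prec' B_n$. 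Each clause is an implication $U_i\Rightarrow V_i$ between Scott-open subsets of $\calP(\omega)$, so the conjunction defines a $\bpi 2$-subset.

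Finally I verify that these clauses cut out exactly $\iota(X)$. One inclusion is clear, since the neighbourhood filter of a point is a rounded, directed filter of basic opens (roundedness uses condition (3)). Conversely, given $S$ satisfying (i)--(iii), directedness and roundedness let me extract, using countability of $S$, a cofinal $\prec'$-decreasing sequence $B_{n_0}\succ' B_{n_1}\succ'\cdots$ of members of $S$; condition (4) supplies a unique point $x$ for which this sequence is a neighbourhood basis, and clauses (i)--(iii) force $\iota(x)=S$. Hence $X$ is homeomorphic to a $\bpi 2$-subspace of $\calP(\omega)$ and is therefore quasi-Polish. The delicate point in this direction is again the convergence phenomenon, now packaged into condition (4): that a rounded directed filter really does converge to a point of which it is the full neighbourhood filter, which is precisely what a convergent approximation relation guarantees.
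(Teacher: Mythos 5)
First, note that the paper offers no proof of this proposition: it is quoted from Becher and Grigorieff~\cite{BG15} and stated without argument, so there is nothing in the paper to match your attempt against line by line. Your converse direction (convergent approximation space $\Rightarrow$ quasi-Polish) is essentially sound: the image of $\iota$ in $\calP(\omega)$ is cut out by countably many implications between open sets, and extracting a cofinal $\prec'$-decreasing sequence from a set $S$ satisfying (i)--(iii) and invoking condition (4) does recover $\iota(x)=S$. You need one further clause forcing $S$ to be nonempty (the empty set satisfies (i)--(iii) vacuously but is not in the image when $X\neq\emptyset$), but that is again an open condition and harmless.

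The forward direction, however, contains a genuine gap, and it sits exactly where you suspected. The relation you define --- $U\prec W$ iff $U\subseteq B(x,\epsilon)$ and $B(x,2\epsilon)\subseteq W$ for some $x$ and rational $\epsilon>0$ --- fails condition (4). Concretely, let $X=\{0,1\}$ with the discrete metric (a Polish, hence quasi-Polish, space): taking $\epsilon=2$ gives $X\subseteq B(0,2)=X$ and $B(0,4)=X\subseteq X$, so $X\prec X$, and the constant sequence $U_i=X$ is $\prec$-decreasing yet is not a neighborhood basis of any single point. The doubling clause provides no shrinking at all, because nothing relates the witness $\epsilon_{i+1}$ to $\epsilon_i$, so the sequence $(x_i)$ need not be forward-Cauchy. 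Your proposed repair (tying the admissible radii to a fixed summable scale) does not close the gap either: since $\prec$ is a single ungraded relation on $\PO(X)$, a $\prec$-decreasing sequence may reuse the same scale level at every step, and the same two-point example does so. A correct construction must make the relation itself record progress --- for instance by comparing an intrinsic ``size'' of $U$ against that of $V$, or by working instead with the $\bpi 2$-presentation of $X$ inside $\calP(\omega)$ and folding the defining implications into $\prec$, as in the strong Choquet game arguments of~\cite{BG15} --- and must then still check that Smyth-completeness yields a point of which the $U_i$ form a full neighborhood basis, with uniqueness. As written, the forward implication is not established.
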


Let $\omega^{<\omega}$ denote the set of finite sequences of natural numbers. We write $\varepsilon$ for the empty sequence. For $s\in \omega^{<\omega}$ and $n\in\omega$ we write $s\diamond n$ to denote the finite sequence obtained by appending $n$ to the end of $s$. A finite sequence of the form $s\diamond n$ is called an \emph{immediate successor} of $s$. For $s,t\in\omega^{<\omega}$ we write $s \sqsubseteq t$ to denote that $s$ is an initial prefix of $t$, and $s \sqsubset t$ if $s\sqsubseteq t$ and $s\not=t$. Similarly, we write $s\sqsubset p$ to denote that $s$ is a finite initial prefix of a countably infinite sequence $p\in\omega^\omega$.

A \emph{tree} is a subset $T\subseteq\omega^{<\omega}$ such that $s\sqsubseteq t\in T$ implies $s\in T$. An infinite sequence $p\in\omega^\omega$ is a \emph{path} in $T$ if $s\sqsubset p$ implies $s\in T$ for each $s\in\omega^{<\omega}$. We denote the set of paths in $T$ by $[T]$, and view $[T]$ as a topological space with the subspace topology inherited from $\omega^\omega$. Note that $[T]$ is a closed subspace of $\omega^{\omega}$.

\begin{defi}
Let $X$ be a quasi-Polish space with convergent approximation relation $\prec$. An \emph{approximation scheme} for $X$ is a function $f\colon T \to\PO(X)$, where $T$ is a tree, such that $s \sqsubset t$ implies $f(t) \prec f(s)$ for each $s,t\in T$. Define $\widehat{f}\colon [T]\to X$ to be the function mapping each infinite path $p\in [T]$ to the unique point having $\{ f(s) \mid s\sqsubset p\}$ as a neighborhood basis.
\qed%
\end{defi}

\begin{lem}
Let $X$ be a quasi-Polish space with convergent approximation relation $\prec$, and let $f\colon T \to\PO(X)$ be an approximation scheme. Then $\widehat{f}\colon [T]\to X$ is a continuous function.
\end{lem}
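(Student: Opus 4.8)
The plan is to prove continuity of $\widehat{f}$ directly, by showing that $\widehat{f}^{-1}(U)$ is open in $[T]$ for every open $U\subseteq X$. Recall that the sets $N_s = \{q\in[T]\mid s\sqsubset q\}$, for $s\in T$, form a basis for the topology that $[T]$ inherits as a subspace of $\omega^\omega$, so it suffices to show that every point of $\widehat{f}^{-1}(U)$ has some such $N_s$ as a neighborhood contained in $\widehat{f}^{-1}(U)$.

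First I would fix an open $U\subseteq X$ and a path $p\in\widehat{f}^{-1}(U)$, so $\widehat{f}(p)\in U$. By the definition of $\widehat{f}$, the family $\{f(s)\mid s\sqsubset p\}$ is a neighborhood basis for $\widehat{f}(p)$; since $U$ is an open neighborhood of $\widehat{f}(p)$, there is a prefix $s\sqsubset p$ with $\widehat{f}(p)\in f(s)\subseteq U$. As $s\sqsubset p$ and $p\in[T]$, we have $s\in T$, and $p\in N_s$, so $N_s$ is a basic open neighborhood of $p$.

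The key step is to verify that $N_s\subseteq\widehat{f}^{-1}(U)$. Let $q\in N_s$ be arbitrary. Then $s\sqsubset q$, so $f(s)$ is a member of the neighborhood basis $\{f(t)\mid t\sqsubset q\}$ of $\widehat{f}(q)$. Because the members of a neighborhood basis are themselves neighborhoods of the point, and each $f(t)$ is open, this forces $\widehat{f}(q)\in f(s)\subseteq U$, i.e.\ $q\in\widehat{f}^{-1}(U)$. Hence $\widehat{f}^{-1}(U)$ contains a basic open neighborhood of each of its points and is therefore open, which establishes the continuity of $\widehat{f}$.

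I do not expect a genuine obstacle here: the argument rests on the \emph{monotone} behaviour hidden in the scheme, namely that fixing a single prefix $s$ with $f(s)\subseteq U$ already traps every path through $s$ inside $U$, since $f(s)$ remains a neighborhood-basis member for all such paths. The only point needing care is the passage from ``$f(s)$ lies in the neighborhood basis of $\widehat{f}(q)$'' to ``$\widehat{f}(q)\in f(s)$'', which uses that each $f(t)$ is open. Notably, neither the monotonicity clauses (1)--(3) of the convergent approximation relation nor the uniqueness part of clause (4) are needed for this lemma; all that is used is that $\{f(s)\mid s\sqsubset q\}$ genuinely is a neighborhood basis, which is precisely what the existence part of clause (4) guarantees (and which also makes $\widehat{f}$ well-defined in the first place).
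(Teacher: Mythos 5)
Your proof is correct and follows essentially the same route as the paper's: pick a prefix $s\sqsubset p$ with $\widehat{f}(p)\in f(s)\subseteq U$ and observe that every path through $s$ has $f(s)$ in its neighborhood basis, so the basic open set $N_s$ lands inside $\widehat{f}^{-1}(U)$. Your closing remark is also accurate --- the paper phrases the chosen prefix as satisfying $f(s)\prec U$, but only the inclusion $f(s)\subseteq U$ and the neighborhood-basis property from clause (4) are actually used.
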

\begin{proof}
Assume $p\in[T]$ and $\widehat{f}(p)\in U$ is open. Then there exists some finite $s\sqsubset p$ such that $\widehat{f}(p) \in f(s) \prec U$. The open set $V_s = \{ q\in [T] \mid s\sqsubset q\}$ satisfies $p \in V_s \subseteq \widehat{f}^{-1}(U)$.
\end{proof}

A tree $T$ is \emph{finitely branching} if every $s\in T$ has at most finitely many immediate successors in $T$. It is well known that $[T]$ is a compact space whenever $T$ is finitely branching, which can be shown using K\"{o}nig's Lemma.

The following was observed during discussions between the first author and Klaus Keimel, and we are grateful to Klaus Keimel for allowing us to include the result here.

\begin{thm}
Every quasi-Polish space is a Wilker space.
\end{thm}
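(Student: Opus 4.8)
The plan is to use the machinery of approximation schemes to realize the compact set $K$ as a continuous image of the path space of a finitely branching tree, and then to split that tree according to which of $U_1, U_2$ each branch falls into. Fix a convergent approximation relation $\prec$ on the basis $\PO(X)$, which exists because $X$ is quasi-Polish. If $K=\emptyset$ we may take $K_1=K_2=\emptyset$, so assume $K\not=\emptyset$.

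First I would build a finitely branching tree $T$ and an approximation scheme $f\colon T\to\PO(X)$ with $f(\varepsilon)=X$ satisfying the invariant that, at every level $n\ge 1$, the opens $\{f(s)\mid |s|=n\}$ cover $K$ and each such $f(s)$ is contained in $U_1$ or in $U_2$. The construction is by refinement: given the level-$n$ nodes, for each $x\in K$ pick a level-$n$ node $s$ with $x\in f(s)$ and $f(s)\subseteq U_\ell$ for the appropriate $\ell$, and use item (3) of the definition of $\prec$ to choose an open $O_x\prec f(s)$ with $x\in O_x$ (so automatically $O_x\subseteq f(s)\subseteq U_\ell$); the family $\{O_x\mid x\in K\}$ covers $K$, and here I use the compactness of $K$ to extract a finite subcover, whose members become the level-$(n+1)$ nodes, each attached to its chosen parent. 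The base case labels each level-$1$ node by an $i\in\{1,2\}$ with $f(s)\subseteq U_i$, which is possible since each level-$1$ open is $\prec U_i$ for some $i$, hence $\subseteq U_i$ by item (1). Item (2) of the definition of $\prec$ guarantees that $f(t)\prec f(s)$ holds for all prefixes $s\sqsubset t$, not merely immediate ones, so $f$ is genuinely an approximation scheme, and finite branching makes $[T]$ compact.

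Next, let $P_i\subseteq[T]$ be the set of paths whose unique level-$1$ node carries label $i$. Each $P_i$ is a finite union of basic clopen sets, hence clopen and therefore compact, and along any $p\in P_i$ we have $\widehat{f}(p)\in f(s)\subseteq U_i$ for the labelled level-$1$ node $s$ of $p$. I would then set $K_i=\uparw\widehat{f}(P_i)$, which is compact (the continuous image $\widehat{f}(P_i)$ of a compact set is compact, and saturation preserves compactness) and contained in $U_i$ (since $U_i$ is open, hence an up-set in the specialization order). To verify $K\subseteq K_1\cup K_2$, fix $x\in K$: the subtree $\{s\in T\mid x\in f(s)\}$ is prefix-closed, finitely branching, and meets every level by the covering invariant, so by K\"onig's Lemma it contains an infinite path $p$, along which $x\in f(s_n)$ for every prefix $s_n\sqsubset p$.

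The main obstacle is this final identification step, and it is where the specialization order intervenes: the path $p$ converges to the unique point $\widehat{f}(p)$ for which $\{f(s_n)\}$ is a neighborhood basis, but this point need not equal $x$ — one only obtains $\widehat{f}(p)\le x$, since every neighborhood of $\widehat{f}(p)$ contains some $f(s_n)\ni x$. This is exactly why the saturations are taken: from $\widehat{f}(p)\le x$ and $p\in P_i$ we conclude $x\in\uparw\{\widehat{f}(p)\}\subseteq\uparw\widehat{f}(P_i)=K_i$. The only other point requiring care is that the cover of $K$ must be refined globally at each stage rather than by trying to cover each $K\cap f(s)$ separately, since the intersection of a compact set with an open set need not be compact, which would break the finite-branching requirement.
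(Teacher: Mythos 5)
Your proof is correct and rests on exactly the same machinery as the paper's: a convergent approximation relation, finitely branching approximation schemes whose levels cover $K$, compactness of the path spaces, and taking $K_i$ to be the saturation of the image under $\widehat{f}$. The only real divergence is organizational and in the last step — the paper grows two trees rooted at $U_1$ and $U_2$ and derives a contradiction from $K_1,K_2\subseteq X\setminus\dnarw\{x\}$ using compactness of the path spaces, whereas you grow one tree with labelled level-one nodes and apply K\"onig's lemma to the subtree of nodes containing $x$, concluding via $\widehat{f}(p)\le x$; both verifications are sound.
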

\begin{proof}
Let $X$ be quasi-Polish with convergent approximation relation $\prec$. Fix $K\in\PU(X)$ and $U_1,U_2\in\PO(X)$ and assume $K\subseteq U_1\cup U_2$.

We inductively define two sequences of finite sets $F_k,G_k\subseteq \omega^k$ and simultaneously construct functions $f\colon \bigcup_{k\in\omega} F_k\to \PO(X)$ and $g\colon \bigcup_{k\in\omega} G_k\to \PO(X)$ such that
\[K\subseteq \Bigl(\bigcup_{s\in F_k}f(s)\Bigr)\cup \Bigl(\bigcup_{t\in G_k}g(t)\Bigr)\]
for each $k\in\omega$.

First define $F_0=\{\varepsilon\}$, $G_0=\{\varepsilon\}$ and $f(\varepsilon)=U_1$, $g(\varepsilon)=U_2$.

Assume we have defined $F_k, G_k\subseteq \omega^k$, and $f(s)$ for each $s\in F_k$, and $g(t)$ for each $t\in G_k$. Let
\[\calI = \{ V\in\PO(X) \mid (\exists s\in F_k)[\, V \prec f(s)\,] \text{ or } (\exists t\in G_k)[\, V \prec g(t)\,] \}.\]
The assumption $K\subseteq \left(\bigcup_{s\in F_k}f(s)\right)\cup \left(\bigcup_{t\in G_k}g(t)\right)$ implies that for each $x\in K$ there is $s\in F_k$ with $x\in f(s)$ or there is $t\in G_k$ with $x\in g(t)$. It follows from the definition of a convergent approximation relation that there is $V\in\calI$ with $x\in V$. From the compactness of $K$ there exist $V_0,\ldots,V_n \in \calI$ covering $K$. Define
\begin{eqnarray*}
F_{k+1}&=&\{ s\diamond i\mid s\in F_k \text{ \& } V_i\prec f(s) \},\\
G_{k+1}&=&\{ t\diamond i\mid t\in G_k \text{ \& } V_i \prec g(t) \},
\end{eqnarray*}
and define $f(s\diamond i) = V_i$ for $s\diamond i \in F_{k+1}$ and $g(t\diamond i) = V_i$ for $t\diamond i \in G_{k+1}$.

Let $S = \bigcup_{k\in\omega} F_k$ and $T = \bigcup_{k\in\omega} G_k$. It is clear from the above construction that $S$ and $T$ are finitely branching trees, and $f\colon S \to \PO(X)$ and $g\colon T\to\PO(X)$ are approximation schemes. It follows that the ranges of the functions $\widehat{f}\colon [S]\to X$ and $\widehat{g}\colon [T]\to X$ are compact subsets of $X$. Let $K_1$ be the saturation of the range of $\widehat{f}$ and let $K_2$ be the saturation of the range of $\widehat{g}$. Clearly, $K_1\subseteq f(\varepsilon)=U_1$ and $K_2\subseteq g(\varepsilon)=U_2$.

Fix $x\in K$ and let $W = X\setminus \dnarw \{x\}$. If $K_1\subseteq W$ then for each $p\in [S]$ there is finite $s\sqsubset p$ such that $f(s)\subseteq W$. The set of such $s$ define an open covering of $[S]$, hence the compactness of $[S]$ and the construction of $f$ implies there is $k\in\omega$ such that $f(s)\subseteq W$ for all $s\in F_k$. Similarly, if $K_2\subseteq W$ then there is $k'\in\omega$ such that $g(t)\subseteq W$ for all $t\in G_{k'}$. If we assume, without loss of generality, that $k\geq k'$, then
\[
    K\subseteq \Bigl(\bigcup_{s\in F_k}f(s)\Bigr)\cup \Bigl(\bigcup_{t\in G_k}g(t)\Bigr)\subseteq W,
\]
which contradicts $x \in K\setminus W$. Therefore, it is impossible for both $K_1$ and $K_2$ to be subsets of $W$, hence $x \in K_1\cup K_2$.
\end{proof}

\begin{cor}\label{cor:qpol_KXcoconsonant}
If $X$ is quasi-Polish then $\PU(X)$ is co-consonant.
\end{cor}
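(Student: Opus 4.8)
The plan is to deduce co-consonance of $\PU(X)$ from the converse direction of Proposition~\ref{prop:coconsonant_strongcompact}. Since $X$ is quasi-Polish, Theorem~\ref{thrm:upperpowerspace} shows that $\PU(X)$ is quasi-Polish, hence consonant (quasi-Polish spaces are consonant~\cite{dBSS15b}) and, by the fact that every quasi-Polish space is a Wilker space, also a Wilker space. Because $\PU(X)$ is consonant, Proposition~\ref{prop:coconsonant_strongcompact} reduces the problem to showing that every compact saturated $\calK\in\PU(\PU(X))$ is strongly compact; that is, for every open $\mathcal{W}\subseteq\PU(X)$ with $\calK\subseteq\mathcal{W}$, I must produce a finite $\calF\subseteq\PU(X)$ with $\calK\subseteq\uparw\calF\subseteq\mathcal{W}$.

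First I would extract a finite subcover: since the sets $\Box U$ (for $U\in\PO(X)$) form a basis of $\PU(X)$ closed under finite intersection, compactness of $\calK$ yields opens $U_1,\ldots,U_n\subseteq X$ with $\calK\subseteq\Box U_1\cup\cdots\cup\Box U_n$ and each $\Box U_i\subseteq\mathcal{W}$. The key step is then to apply the Wilker property of $\PU(X)$ (extended from two to finitely many opens by the usual induction) to this cover, producing compact saturated sets $\calK_1,\ldots,\calK_n\in\PU(\PU(X))$ with $\calK_i\subseteq\Box U_i$ and $\calK\subseteq\calK_1\cup\cdots\cup\calK_n$. Now I collapse each compact family of compacts to a single compact set using the monad multiplication: set $L_i=\mu^\PU_X(\calK_i)=\bigcup_{K\in\calK_i}K\in\PU(X)$. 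Since $\calK_i\subseteq\Box U_i$, every $K\in\calK_i$ lies in $U_i$, so $L_i\subseteq U_i$; consequently $\uparw\{L_i\}=\{K'\in\PU(X)\mid K'\subseteq L_i\}\subseteq\Box U_i\subseteq\mathcal{W}$. Taking $\calF=\{L_1,\ldots,L_n\}$, any $K\in\calK$ belongs to some $\calK_i$ and hence satisfies $K\subseteq L_i$, i.e. $K\in\uparw\{L_i\}$; therefore $\calK\subseteq\uparw\calF\subseteq\mathcal{W}$, as required.

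The one point requiring care—and, I expect, the main conceptual obstacle—is recognizing that the Wilker property should be applied to $\PU(X)$ rather than to $X$. A naive attempt to decompose the union $\bigcup\calK$ directly in $X$ via the Wilker property of $X$ fails, because the resulting compact pieces inside $U_1,\ldots,U_n$ need not contain each individual member $K\in\calK$; since the specialization order on $\PU(X)$ is reverse inclusion, $\uparw\calF$ captures a point $K$ only when $K$ is a subset of some single $L_j$. Working one level up lets the Wilker decomposition respect the splitting of $\calK$ according to which $\Box U_i$ each member lies in, and $\mu^\PU_X$ then packages each piece as the required witness. It remains only to verify the routine facts that $\mu^\PU_X(\calK_i)$ is compact saturated (immediate from the monad structure) and that open subsets of $\PU(X)$ are downward closed under $\subseteq$, so that $\uparw\{L_i\}\subseteq\Box U_i$.
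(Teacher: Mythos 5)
Your argument is correct and follows essentially the same route as the paper: a finite subcover by sets $\Box U_i$, the Wilker property of the quasi-Polish space $\PU(X)$ to split $\calK$ into pieces $\calK_i\subseteq\Box U_i$, the multiplication $\mu^\PU_X$ to collapse each piece to a single compact set, and then the second half of Proposition~\ref{prop:coconsonant_strongcompact} via consonance of $\PU(X)$. The remark about why the Wilker property must be applied one level up, to $\PU(X)$ rather than $X$, is exactly the right observation.
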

\begin{proof}
Fix $\calK\in\PU(\PU(X))$ and $\calU \in \PO(\PU(X))$ such that $\calK\subseteq \calU$. From the compactness of $\calK$ and the definition of the upper Vietoris topology, there exist $U_0,\ldots, U_n \in \PO(X)$ such that $\calK \subseteq \square U_0 \cup \cdots \cup \square U_n \subseteq \calU$. $\PU(X)$ is a Wilker space because it is quasi-Polish, and therefore there exist $\calK_i \in \PU(\PU(X))$ $(0\leq i \leq n)$ such that $\calK_i \subseteq \square U_i$ and $\calK \subseteq \calK_0 \cup \cdots \cup \calK_n$. For each $i$, the set $K_i  = \mu^{\PU}_X(\calK_i) = \bigcup \calK_i$ is in $\PU(X)$ and $K_i \in \square U_i$. Clearly $\calK_i \subseteq \uparw\{K_i\}$, hence $\calK \subseteq \uparw\{K_0, \ldots, K_n\} \subseteq \calU$, which shows that $\calK$ is strongly compact. Since $\PU(X)$ is consonant, the second part of Proposition~\ref{prop:coconsonant_strongcompact} implies $\PU(X)$ is co-consonant.
\end{proof}

\begin{exa}
$\PL(X)$ is not co-consonant in general, even when $X$ is quasi-Polish. As a counterexample, let $X$ be the quasi-Polish space with the weak topology from Example~\ref{ex:AX_not_Scott}. Then $\calK = \Diamond X = \PL(X)\setminus\{\emptyset\}$ is compact (because it is the saturation of the image of the compact space $X$ under the continuous map $\eta^\PL_X$) and it is open in $\PL(X)$, but $\calK$ does not equal the saturation of a finite subset of $\PL(X)$.
\qed%
\end{exa}

\section{\texorpdfstring{$\PL(\PO(X))$}{A(O(X))} and \texorpdfstring{$\PO(\PU(X))$}{O(K(X))} are homeomorphic}\label{sec:AO_OK} 

\begin{defi}\label{def:AO_OK}
For each quasi-Polish space $X$ define $\aook_X\colon \PL(\PO(X))\to\PO(\PU(X))$ and $\okao_X\colon \PO(\PU(X))\to\PL(\PO(X))$ as
\begin{align*}
\aook_X(\frakA)&= \bigcup_{U\in\frakA}\square U,\\
\okao_X(\calU)&= \{U\in\PO(X) \mid \square U \subseteq \calU \}.
\tag*{\qed}
\end{align*}
\end{defi}
We will usually omit the subscripts from $\aook_X$ and $\okao_X$ when the space is clear from the context.

\begin{lem}\label{lem:AO_OKcontinuous}
The function $\aook$ is continuous for each quasi-Polish space $X$.
\end{lem}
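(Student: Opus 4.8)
The plan is to verify continuity of $\aook$ by checking it only on a subbasis of the Scott topology on the codomain $\PO(\PU(X))$, and to obtain such a subbasis from the co-consonance of $\PU(X)$. Since $X$ is quasi-Polish, Corollary~\ref{cor:qpol_KXcoconsonant} gives that $\PU(X)$ is co-consonant, which by the defining property of co-consonance means that the sets $\triangleup\calA = \{\calV\in\PO(\PU(X)) \mid \calA\cap\calV\neq\emptyset\}$, where $\calA$ ranges over $\PL(\PU(X))$, form a subbasis for the Scott topology on $\PO(\PU(X))$. Hence it suffices to show that $\aook^{-1}(\triangleup\calA)$ is open in $\PL(\PO(X))$ for every $\calA\in\PL(\PU(X))$.

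The key step is then the identity $\aook^{-1}(\triangleup\calA) = \Diamond\,\kaoo(\akka(\calA))$. Unwinding the definitions, $\frakA\in\aook^{-1}(\triangleup\calA)$ holds precisely when $\aook(\frakA) = \bigcup_{U\in\frakA}\Box U$ meets $\calA$, i.e.\ when there exist $U\in\frakA$ and $K\in\calA$ with $K\subseteq U$. On the other side, Lemma~\ref{lem:akka_correct} (via $\akka^{-1}(\Box\Diamond U) = \Diamond\Box U$) together with Lemma~\ref{lem:subbasisForAK_KA}(\ref{lem:item:AK}) shows $\kaoo(\akka(\calA)) = \{U\in\PO(X) \mid (\exists K\in\calA)\,K\subseteq U\}$, so that $\frakA\in\Diamond\,\kaoo(\akka(\calA))$ unwinds to the same condition: some $U\in\frakA$ contains some $K\in\calA$. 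Since $\akka(\calA)\in\PU(\PL(X))$ by Lemma~\ref{lem:akka_correct} and $\kaoo$ is well-defined by Lemma~\ref{lem:KAX->OOX}, the set $\kaoo(\akka(\calA))$ is Scott-open in $\PO(X)$, and therefore $\Diamond\,\kaoo(\akka(\calA))$ is a subbasic open subset of $\PL(\PO(X))$. This proves that the preimage is open, and hence that $\aook$ is continuous.

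I do not expect any single deep obstacle within the proof itself: once co-consonance of $\PU(X)$ is invoked, everything reduces to the bookkeeping of the preimage identity, each equivalence of which is immediate from the definitions and the cited lemmas. The only place demanding care is making sure the subbasis supplied by co-consonance is exactly the family $\{\triangleup\calA \mid \calA\in\PL(\PU(X))\}$ and correctly matching up the quantifiers in the chain of rewrites; the genuinely substantial input---that $\PU(X)$ is co-consonant---has already been established (ultimately via the Wilker property of quasi-Polish spaces) and is used here as a black box.
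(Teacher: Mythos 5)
Your proposal is correct and follows essentially the same route as the paper: invoke Corollary~\ref{cor:qpol_KXcoconsonant} to reduce continuity to preimages of the subbasic sets $\triangleup\calA$, then establish the identity $\aook^{-1}(\triangleup\calA)=\Diamond\kaoo(\akka(\calA))$ by unwinding definitions and Lemma~\ref{lem:akka_correct}. The only cosmetic difference is that you compute $\kaoo(\akka(\calA))$ explicitly as $\{U\in\PO(X)\mid(\exists K\in\calA)\,K\subseteq U\}$ before comparing, whereas the paper chains the equivalences directly; both are the same argument.
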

\begin{proof}
$\PU(X)$ is co-consonant by Corollary~\ref{cor:qpol_KXcoconsonant}, so it suffices to show that $\aook^{-1}(\triangleup \calA)$ is open for each $\calA\in\PL(\PU(X))$. For $\frakA\in\PL(\PO(X))$ we have
\begin{eqnarray*}
\aook(\frakA) \in \triangleup \calA &\iff& \left(\bigcup_{U\in\frakA}\square U \right)\cap\calA\not=\emptyset\\
&\iff& (\exists U\in\frakA)\,\calA\in\Diamond\Box U\\
&\iff& (\exists U\in\frakA)\, \akka(\calA)\in\Box\Diamond U\text{ (by Lemma~\ref{lem:akka_correct})}\\
&\iff& (\exists U\in\frakA)\,U \in \kaoo(\akka(\calA))\text{ (by definition of $\kaoo$)}\\
&\iff& \frakA \in \Diamond\kaoo(\akka(\calA)),
\end{eqnarray*}
where $\kaoo\circ\akka\colon \PL(\PU(X))\to\PO(\PO(X))$ is the composition of the homeomorphisms from Definitions~\ref{def:AK_KA} and~\ref{def:KAOO}.
\end{proof}

\begin{lem}\label{lem:OK_AOcontinuous}
The function $\okao$ is well-defined and continuous for each quasi-Polish space $X$.
\end{lem}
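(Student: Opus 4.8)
The plan is to first confirm that $\okao(\calU)$ is a legitimate point of $\PL(\PO(X))$ --- that is, a Scott-closed subset of $\PO(X)$ --- and then to obtain continuity by computing the preimages of subbasic open sets. For well-definedness, fix $\calU\in\PO(\PU(X))$ and write $C=\okao(\calU)=\{U\in\PO(X)\mid \Box U\subseteq\calU\}$. I would verify the two defining properties of a Scott-closed set directly: $C$ is a lower set, since $V\subseteq U$ gives $\Box V\subseteq\Box U$, so $\Box U\subseteq\calU$ forces $\Box V\subseteq\calU$; and $C$ is closed under directed suprema, since for a directed $D\subseteq C$ with union $U=\bigcup D$, any $K\in\Box U$ is covered by a single $V\in D$ by compactness of $K$ and directedness of $D$, whence $K\in\Box V\subseteq\calU$, giving $\Box U\subseteq\calU$ and $U\in C$. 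Hence $C\in\PL(\PO(X))$.

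For continuity, I would use that the lower Vietoris topology on $\PL(\PO(X))$ is generated by the sets $\Diamond\calH$ with $\calH$ ranging over the Scott-open subsets of $\PO(X)$, i.e.\ over $\PO(\PO(X))$. It thus suffices to prove
\[
  \okao^{-1}(\Diamond\calH)=\triangleup\,\kaak(\ooka(\calH))
\]
for each $\calH\in\PO(\PO(X))$. The right-hand side is a Scott-open subset of $\PO(\PU(X))$ by the definition of $\triangleup$, since $\ooka(\calH)\in\PU(\PL(X))$ by Lemma~\ref{lem:OOX->KAX} and so $\kaak(\ooka(\calH))\in\PL(\PU(X))$ by Lemma~\ref{lem:kaak_welldefined}. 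Unfolding definitions, $\calU$ lies in the left-hand side exactly when some $U\in\PO(X)$ satisfies $\Box U\subseteq\calU$ and $U\in\calH$, while $\calU$ lies in the right-hand side exactly when some $K\in\PU(X)$ satisfies $K\in\calU$ and $K\in\kaak(\ooka(\calH))$. Using $\kaoo(\ooka(\calH))=\calH$ (Theorem~\ref{thrm:KA_OO_iso}) together with Lemma~\ref{lem:kaoo_and_kaak}, the membership $K\in\kaak(\ooka(\calH))$ is equivalent to $\triangledown K\subseteq\calH$, so the identity reduces to an equivalence between the existence of such a $U$ and of such a $K$.

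The heart of the matter is this last equivalence. The direction from $K$ to $U$ is routine: since finite intersections of subbasic opens of $\PU(X)$ collapse via $\Box V\cap\Box V'=\Box(V\cap V')$, membership $K\in\calU$ yields some $V$ with $K\in\Box V\subseteq\calU$, and then $K\subseteq V$ with $\triangledown K\subseteq\calH$ forces $V\in\calH$, so $U=V$ works. The reverse direction is where I expect the main obstacle, and it is exactly where the hypothesis on $X$ is needed: given $U\in\calH$ with $\Box U\subseteq\calU$, consonance of $X$ (every quasi-Polish space is consonant) supplies $K\in\PU(X)$ with $U\in\triangledown K\subseteq\calH$, so that $K\subseteq U$ gives $K\in\Box U\subseteq\calU$ while $\triangledown K\subseteq\calH$ holds by construction. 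Chaining these equivalences proves the displayed identity, so each $\okao^{-1}(\Diamond\calH)$ is open in $\PO(\PU(X))$, and hence $\okao$ is continuous.
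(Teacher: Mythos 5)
Your proposal is correct and follows essentially the same route as the paper: well-definedness via Scott-closedness of $\okao(\calU)$ (lower set plus $\Box$ distributing over directed joins), and continuity via the identity $\okao^{-1}(\Diamond\calH)=\triangleup\,\kaak(\ooka(\calH))$, with consonance of $X$ doing the work in the existential equivalence. The paper packages that last step as Theorem~\ref{thrm:consonance}(\ref{enum:kaak_eq}) ($\kaak^{-1}(\Diamond\Box U)=\Box\Diamond U$) where you invoke consonance directly together with Lemma~\ref{lem:kaoo_and_kaak}, but the content is identical.
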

\begin{proof}
The function is well-defined because the right hand side of the definition is a lower subset of $\PO(X)$ which is closed under directed joins (because $\Box$ distributes over directed joins), and therefore Scott-closed.
For $\calH\in\PO(\PO(X))$ and $\calU\in\PO(\PU(X))$ we have
\begin{eqnarray*}
\okao(\calU) \in \Diamond\calH &\iff&  (\exists U\in\PO(X)) [\, U\in \calH \text{ and } \square U \subseteq \calU\,]\\
&\iff&  (\exists U\in\PO(X)) [\, \ooka(\calH)\in\Box\Diamond U \text{ and } \square U \subseteq \calU\,]\text{ (by Lemma~\ref{lem:OOX->KAX})}\\
&\iff&  (\exists U\in\PO(X)) [\, \kaak(\ooka(\calH))\in\Diamond\Box U \text{ and } \square U \subseteq \calU\,]\text{ (by Theorem~\ref{thrm:consonance})}\\
&\iff& \kaak(\ooka(\calH)) \cap \calU \not=\emptyset\\
&\iff& \calU \in \triangleup \kaak(\ooka(\calH)),
\end{eqnarray*}
where $\kaak\circ\ooka\colon \PO(\PO(X))\to \PL(\PU(X))$ is the composition of the homeomorphisms from Definitions~\ref{def:AK_KA} and~\ref{def:KAOO}. For the fourth equivalence, recall the definition of $\Diamond$ and use the fact that $\calU\subseteq \PU(X)$ is open.
\end{proof}



\begin{thm}\label{thrm:AO_OK_homeomorphic}
If $X$ is quasi-Polish then $\PL(\PO(X))$ with the lower Vietoris topology is homeomorphic (via $\aook$ and its inverse $\okao$) to $\PO(\PU(X))$ with the Scott topology.
\end{thm}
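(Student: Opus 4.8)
The plan is to verify that $\aook$ and $\okao$ are mutually inverse bijections. Since both maps are already known to be continuous (Lemmas~\ref{lem:AO_OKcontinuous} and~\ref{lem:OK_AOcontinuous}), a pair of inverse continuous maps is automatically a homeomorphism, so it suffices to establish the two equations $\aook(\okao(\calU)) = \calU$ for every $\calU\in\PO(\PU(X))$ and $\okao(\aook(\frakA)) = \frakA$ for every $\frakA\in\PL(\PO(X))$. I would prove these separately, as the two directions rely on quite different facts.

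For the first equation, I would unfold $\aook(\okao(\calU)) = \bigcup\{\square U \mid U\in\PO(X),\ \square U\subseteq \calU\}$; the inclusion into $\calU$ is immediate since every term of the union lies in $\calU$. The reverse inclusion rests on the elementary observation that $\{\square U \mid U\in\PO(X)\}$ is a \emph{basis} for the upper Vietoris topology on $\PU(X)$, because $\square U\cap\square V = \square(U\cap V)$. Hence any $K\in\calU$ lies in some basic open set $\square U\subseteq\calU$; then $U\in\okao(\calU)$ and therefore $K\in\aook(\okao(\calU))$. I expect this direction to be routine.

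The second equation is where the hypothesis on $X$ is genuinely used, and its nontrivial inclusion is the main obstacle. Writing $\okao(\aook(\frakA)) = \{W\in\PO(X)\mid \square W\subseteq \bigcup_{U\in\frakA}\square U\}$, the inclusion $\frakA\subseteq\okao(\aook(\frakA))$ is trivial, since $W\in\frakA$ gives $\square W\subseteq\aook(\frakA)$ directly. For the converse, suppose $\square W\subseteq\bigcup_{U\in\frakA}\square U$ but, for contradiction, $W\notin\frakA$. As $\frakA$ is a closed subset of $\PO(X)$ under the Scott topology, its complement $\calH = \PO(X)\setminus\frakA$ is Scott-open and contains $W$. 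Since $X$ is quasi-Polish it is consonant, so there is $K\in\PU(X)$ with $W\in\triangledown K\subseteq\calH$. From $W\in\triangledown K$ we get $K\subseteq W$, so $K\in\square W$; and $\triangledown K\subseteq\calH$ forces $K\not\subseteq V$ for every $V\in\frakA$, so $K\notin\bigcup_{U\in\frakA}\square U$. This contradicts $\square W\subseteq\bigcup_{U\in\frakA}\square U$, whence $W\in\frakA$. The delicate point is precisely that consonance is exactly the tool needed to turn an inclusion of $\square$-neighbourhoods into membership of the Scott-closed set $\frakA$ (by producing a compact witness $K$ separating $W$ from $\frakA$), which explains why the statement is restricted to quasi-Polish, hence consonant, spaces.
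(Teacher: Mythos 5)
Your proof is correct, but it takes a genuinely different and more elementary route than the paper's. The paper derives both identities $\aook\circ\okao=\mathrm{id}$ and $\okao\circ\aook=\mathrm{id}$ by chaining together the preimage formulas established in the proofs of Lemmas~\ref{lem:AO_OKcontinuous} and~\ref{lem:OK_AOcontinuous}: it shows that $\aook(\okao(\calU))$ and $\calU$ belong to exactly the same subbasic opens $\triangleup\calA$ (and dually for $\Diamond\calH$), and then concludes equality from the $T_0$ separation of these spaces together with the fact that the $\triangleup\calA$ (resp.\ $\Diamond\calH$) generate the topology, the former relying on the co-consonance of $\PU(X)$ from Corollary~\ref{cor:qpol_KXcoconsonant}. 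You instead verify the two set-theoretic identities directly: the identity on $\PO(\PU(X))$ follows from the purely formal observation that the $\square U$ form a basis closed under finite intersections, and the identity on $\PL(\PO(X))$ follows from the consonance of $X$ applied to the Scott-open complement of $\frakA$, which produces a compact witness $K\in\square W\setminus\bigcup_{U\in\frakA}\square U$ whenever $W\notin\frakA$. Your argument has the merit of isolating exactly where the hypothesis on $X$ enters (only in the second identity, and only through consonance of $X$ itself), and of avoiding the four auxiliary homeomorphisms $\akka$, $\kaak$, $\kaoo$, $\ooka$ entirely at this stage; the paper's route, by contrast, buys the explicit preimage formulas $\aook_X^{-1}(\triangleup\calA)=\Diamond\kaoo_X(\akka_X(\calA))$ and $\okao_X^{-1}(\Diamond\calH)=\triangleup\kaak_X(\ooka_X(\calH))$ recorded in Table~\ref{tab:summary} essentially for free. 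Note that you still need Lemmas~\ref{lem:AO_OKcontinuous} and~\ref{lem:OK_AOcontinuous} for well-definedness and continuity of the two maps, as you acknowledge, so the overall dependence on the earlier machinery is reduced but not eliminated.
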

\begin{proof}
From the previous lemmas, it suffices to show that $\aook\circ \okao$ and $\okao\circ \aook$ are the identity functions.

First, for $\calU \in\PO(\PU(X))$ and $\calA\in\PL(\PU(X))$, from the proofs of Lemmas~\ref{lem:AO_OKcontinuous} and~\ref{lem:OK_AOcontinuous} we have
\begin{eqnarray*}
\aook(\okao(\calU)) \in \triangleup\calA &\iff& \okao(\calU) \in \Diamond\kaoo(\akka(\calA))\\
&\iff& \calU \in \triangleup \kaak(\ooka(\kaoo(\akka(\calA))))\\
&\iff& \calU \in \triangleup \calA\text{ (by Theorems~\ref{thrm:KA_OO_iso} and~\ref{thrm:cons_AK_KA_homeo})},
\end{eqnarray*}
hence $\aook\circ \okao$ is the identity on $\PO(\PU(X))$.

Similarly, for $\frakA \in \PL(\PO(X))$ and $\calH\in\PO(\PO(X))$ we have
\begin{eqnarray*}
\okao(\aook(\frakA)) \in\Diamond \calH &\iff& \aook(\frakA)\in\triangleup \kaak(\ooka(\calH))\\
&\iff& \frakA \in \Diamond\kaoo(\akka(\kaak(\ooka(\calH))))\\
&\iff& \frakA \in \Diamond\calH \text{ (by Theorems~\ref{thrm:cons_AK_KA_homeo} and~\ref{thrm:KA_OO_iso})},
\end{eqnarray*}
hence $\okao\circ\aook$ is the identity on $\PL(\PO(X))$.
\end{proof}

The homeomorphisms $\aook$ and $\okao$ determine natural isomorphisms between the contravariant functors $\PL\circ\PO$ and $\PO\circ \PU$ when we restrict the domains of the functors to the subcategory of quasi-Polish spaces. To prove that $\okao_X \circ \PO(\PU(f)) = \PL(\PO(f)) \circ \okao_Y$ for every continuous function $f\colon X\to Y$, we first show that both functions in the equation are frame homomorphisms. It is clear that $\PO(\PU(f))$ is a frame homomorphism, and so are $\okao_X$ and $\okao_Y$ because they are order isomorphisms. Applying the monad $\PL$ to any continuous map results in a join preserving semilattice homomorphism (see Section~6.3 of~\cite{schalk:phd}), hence $\PL(\PO(f))$ preserves all joins. Furthermore, the fact that $\PO(f)$ preserves finite meets lifts\footnote{We will see in the next section that $\PO(X)$ and $\PO(Y)$ are (consonant) algebras of the upper powerspace monad $\PU$. With little effort one can show that the frame homomorphism $\PO(f)$ is a morphism of $\PU$-algebras, and then use general results on Beck distributivity~\cite{be69} to show that $\PL(\PO(f))$ is also a morphism of $\PU$-algebras. It then follows from Schalk's results (see Section~7.3 of~\cite{schalk:phd}) that $\PL(\PO(f))$ preserves finite meets.} to $\PL(\PO(f))$. This is clear for the empty meet, and is also easy to see for binary meets by using the fact that closed sets are lower sets:
\begin{eqnarray*}
\PL(\PO(f))(\frakA_1 \cap \frakA_2) &=& Cl\left(\{ \PO(f)(U) \mid U \in \frakA_1 \cap \frakA_2 \}\right)\\
  &=& Cl\left(\{ \PO(f)(U_1 \cap U_2) \mid U_1 \in \frakA_1 \text{ \& } U_2 \in \frakA_2 \}\right)\\
  &=& Cl\left(\{ \PO(f)(U_1) \cap \PO(f)(U_2) \mid U_1 \in \frakA_1 \text{ \& } U_2 \in \frakA_2 \}\right)\\
  &=& Cl\left(\{ \PO(f)(U_1) \mid U_1 \in \frakA_1 \}\right)\cap Cl\left(\{ \PO(f)(U_2) \mid U_2 \in \frakA_2 \}\right)\\
  &=& \PL(\PO(f))(\frakA_1) \cap \PL(\PO(f))(\frakA_2).
\end{eqnarray*}
To show the reverse inclusion $\supseteq$ of the fourth equality above, note that if $W \in\PO(X)$ is in the intersection of the two closed sets on the right hand side of the equation and $K \subseteq W$ is compact, then there must be $U_1 \in \frakA_1$ and $U_2 \in \frakA_2$ with $K \subseteq \PO(f)(U_1) $ and $K \subseteq \PO(f)(U_2)$, hence $K \subseteq \PO(f)(U_1)\cap\PO(f)(U_2)$. It follows from consonance that every neighborhood of $W$ intersects the closed set on the left hand side of the equation.

So to prove that the frame homomorphisms  $\okao_X \circ \PO(\PU(f))$ and $\PL(\PO(f)) \circ \okao_Y$ are equal, it suffices to show that they both map each basic open $\Box U \in \PO(\PU(Y))$ to $\dnarw\{f^{-1}(U)\}\in \PL(\PO(X))$. Indeed, $\PO(\PU(f)) (\Box U) = {\PU(f)}^{-1}(\Box U) = \Box f^{-1}(U)$, which $\okao_X$ then maps to $\{W \in \PO(X) \mid \Box W \subseteq \Box f^{-1}(U)\} = \dnarw\{f^{-1}(U)\}$. Going in the other direction, $\okao_Y(\Box U) = \{W\in \PO(Y) \mid \Box W \subseteq \Box U\} = \dnarw\{U\}$, and $\PL(\PO(f))(\dnarw\{U\})$ equals the closure of $\{\PO(f)(W) \mid W \subseteq U \}$, which is just $\dnarw\{f^{-1}(U)\}$. The naturality of $\aook$ immediately follows from that of $\beta$ because they are inverses.

\begin{cor}
If $X$ is quasi-Polish then the function $\bigcup\colon \PL(\PO(X)) \to \PO(X)$, which maps a closed subset of opens to their set-theoretical union, is continuous.
\end{cor}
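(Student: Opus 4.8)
The plan is to avoid working directly with the Scott topology on $\PO(X)$ and instead exhibit the union map as a composite of two maps that are already known to be continuous. Concretely, I would show that $\bigcup = \PO(\eta^{\PU}_X)\circ \aook$, where $\aook\colon \PL(\PO(X))\to\PO(\PU(X))$ is the map of Definition~\ref{def:AO_OK} and $\PO(\eta^{\PU}_X) = (\eta^{\PU}_X)^{-1}\colon \PO(\PU(X))\to\PO(X)$ is the frame homomorphism induced by the unit $\eta^{\PU}_X\colon X\to\PU(X)$ of the upper powerspace monad. The two factors are continuous for free: $\aook$ is continuous by Lemma~\ref{lem:AO_OKcontinuous} (in fact it is a homeomorphism by Theorem~\ref{thrm:AO_OK_homeomorphic}), and $\PO(\eta^{\PU}_X)$ is Scott-continuous because $\PO$ sends continuous maps to frame homomorphisms. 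Hence the composite, and therefore $\bigcup$, is continuous.

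The only thing requiring verification is the factorization identity, which is a short pointwise calculation. Fix $\frakA\in\PL(\PO(X))$; by definition $\aook(\frakA)=\bigcup_{U\in\frakA}\Box U$, so for any $x\in X$,
\[
x\in \PO(\eta^{\PU}_X)(\aook(\frakA)) \iff \uparw\{x\}\in \bigcup_{U\in\frakA}\Box U \iff (\exists U\in\frakA)\,\uparw\{x\}\subseteq U.
\]
Since each $U$ is open, hence an upper set in the specialization order, the condition $\uparw\{x\}\subseteq U$ is equivalent to $x\in U$. Thus the right-hand side holds if and only if $x\in\bigcup_{U\in\frakA}U = \bigcup\frakA$, which shows $\PO(\eta^{\PU}_X)(\aook(\frakA)) = \bigcup\frakA$ and establishes the factorization.

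There is essentially no obstacle here once the factorization is spotted; the whole content lies in recognizing that the set-theoretic union map is the image under $\PO$ of the monad unit, precomposed with the homeomorphism $\aook$. For contrast, I would note that one could instead argue directly, using the consonance of $X$ to reduce to showing that $(\bigcup)^{-1}(\triangledown K)$ is lower-Vietoris-open for each $K\in\PU(X)$; but that route forces one to manage finite subcovers of $K$ drawn from the Scott-closed set $\frakA$, which is not closed under finite unions, so the factorization argument is considerably cleaner.
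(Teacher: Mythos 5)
Your proposal is correct and is essentially identical to the paper's own proof: both factor $\bigcup$ as ${(\eta^\PU_X)}^{-1}\circ\aook$, invoke the continuity of $\aook$ and the Scott-continuity of the frame homomorphism ${(\eta^\PU_X)}^{-1}$, and verify the factorization by the same pointwise computation $x\in{(\eta^\PU_X)}^{-1}(\aook(\frakA))\iff \uparw\{x\}\in\bigcup_{U\in\frakA}\Box U\iff x\in\bigcup\frakA$. Nothing further is needed.
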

\begin{proof}
Let $\eta^{\PU}_X\colon X\to\PU(X)$ be the unit for the monad $\PU$. The function ${(\eta^\PU_X)}^{-1} \colon \PO(\PU(X)) \to\PO(X)$, which maps each open $\calU\subseteq \PU(X)$ to its preimage under $\eta^\PU_X$, is a frame homomorphism and therefore Scott-continuous. Given $\frakA\in\PL(\PO(X))$ we have
\begin{eqnarray*}
x \in {(\eta^\PU_X)}^{-1}(\aook(\frakA)) &\iff& \eta^\PU_X(x) \in \aook(\frakA)\\
&\iff& \eta^\PU_X(x) \in \bigcup_{U\in\frakA}\square U\\
&\iff& x \in \bigcup_{U\in\frakA} U.
\end{eqnarray*}
Therefore, $\bigcup = {(\eta^\PU_X)}^{-1} \circ \aook$ is continuous.
\end{proof}

It is also interesting to note that the composition $\alpha\circ \eta^\PL_{\PO(X)}$  is the continuous function which maps $U\in\PO(X)$ to $\Box U \in\PO(\PU(X))$.

\begin{cor}\label{cor:OX_sober}
If $X$ is quasi-Polish then $\PO(X)$ is sober.
\end{cor}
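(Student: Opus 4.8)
The plan is to exhibit $\PO(X)$ as a retract of the space $\PL(\PO(X))$, which is already known to be sober, and then invoke the standard fact that a retract of a sober space is sober. The point is that essentially all the work has already been carried out in the preceding results; this corollary is just a matter of assembling them correctly.

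First, recall that $\PL(\PO(X))$ is sober by Proposition~\ref{prop:lowerpowerspace_sober} (this uses nothing about $X$). The section will be the monad unit $\eta^{\PL}_{\PO(X)}\colon \PO(X)\to\PL(\PO(X))$, which sends $U$ to $\dnarw\{U\} = \{V\in\PO(X)\mid V\subseteq U\}$; here I use that the specialization order on $\PO(X)$ with the Scott topology is inclusion, so the closure of $\{U\}$ is the principal down-set of $U$. This map is continuous because it is the unit of the lower powerspace monad. The retraction will be the union map $\bigcup\colon \PL(\PO(X))\to\PO(X)$, whose continuity for quasi-Polish $X$ is exactly the content of the preceding corollary.

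The only computation needed is that $\bigcup\circ\eta^{\PL}_{\PO(X)}$ is the identity on $\PO(X)$: indeed $\bigcup\dnarw\{U\} = \bigcup\{V\mid V\subseteq U\} = U$. This presents $\PO(X)$ as a retract of the sober space $\PL(\PO(X))$, and since retracts of sober spaces are sober, I conclude that $\PO(X)$ is sober.

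The only remaining obstacle is justifying that retracts of sober spaces are sober, which I would dispatch by the usual argument. Given an irreducible closed set $C\subseteq\PO(X)$, its image under the section $\eta^{\PL}_{\PO(X)}$ is irreducible (being a continuous image of an irreducible set), so the closure of this image in $\PL(\PO(X))$ has a unique generic point $z$ by sobriety; applying the retraction $\bigcup$ and using that continuous maps are monotone with respect to the specialization order then shows that $\bigcup(z)$ is a generic point for $C$, with uniqueness following from the fact that $\PO(X)$ is $T_0$. As this is a standard general lemma, it could equally well simply be cited rather than reproven.
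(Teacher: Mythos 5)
Your proof is correct and is essentially identical to the paper's: the paper also exhibits $\PO(X)$ as a continuous retract of the sober space $\PL(\PO(X))$ via the unit $\eta^{\PL}_{\PO(X)}$ and the union map $\bigcup$ from the preceding corollary, and concludes by the standard fact that retracts of sober spaces are sober. The extra detail you supply (the computation $\bigcup\dnarw\{U\}=U$ and the generic-point argument) is a sound elaboration of steps the paper leaves implicit.
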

\begin{proof}
$\PL(\PO(X))$ is sober by Proposition~\ref{prop:lowerpowerspace_sober}, and the functions $\eta^\PL_{\PO(X)}\colon\PO(X)\to\PL(\PO(X))$ and $\bigcup\colon \PL(\PO(X)) \to \PO(X)$ show that $\PO(X)$ is a continuous retract of $\PL(\PO(X))$.  Therefore, $\PO(X)$ is sober.
\end{proof}

We obtain the following corollary from A.~Schalk's characterization of the algebras of the lower powerspace monad  (see Section~6.3 in~\cite{schalk:phd}).

\begin{cor}
If $X$ is quasi-Polish, then $\PO(X)$ is an algebra of the lower powerspace monad $\PL$, with set-theoretical union as the structure map.
\qed%
\end{cor}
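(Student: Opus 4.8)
The plan is to invoke A.~Schalk's characterization of the Eilenberg--Moore algebras of the lower powerspace monad (Section~6.3 of~\cite{schalk:phd}). Up to the precise form of that characterization, a $T_0$-space $A$ carries a $\PL$-algebra structure exactly when $A$ is sober, its specialization order is a complete lattice, and the operation sending each closed subset of $A$ to its supremum (computed in the specialization order) is continuous; in that case this supremum operation is the structure map. So the whole task reduces to exhibiting $\PO(X)$ as such a space and checking that the abstract supremum operation is literally set-theoretical union.

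First I would pin down the order-theoretic data. With the Scott topology, the specialization order of $\PO(X)$ is inclusion, and since $\PO(X)$ is a frame it is a complete lattice in which the supremum of any family of opens is their set-theoretical union. Hence for a closed subset $\frakA\in\PL(\PO(X))$ the supremum of $\frakA$ in $\PO(X)$ is precisely $\bigcup_{U\in\frakA}U$, so the supremum operation demanded by the characterization is exactly the map $\bigcup\colon\PL(\PO(X))\to\PO(X)$. The remaining two hypotheses are then supplied by results already in hand: continuity of $\bigcup$ is the content of the preceding corollary, and sobriety of $\PO(X)$ is Corollary~\ref{cor:OX_sober}. Assembling these, $\PO(X)$ meets every condition of Schalk's characterization and is therefore a $\PL$-algebra with $\bigcup$ as its structure map.

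The only point that genuinely requires care is the identification of the structure map: one must verify that the supremum operation produced abstractly by the characterization is set-theoretical union (immediate from the frame structure, as above) and that exactly the continuity provided by the preceding corollary is what the characterization asks for, rather than some stronger continuity hypothesis. I do not expect any essential difficulty beyond this bookkeeping. As a sanity check one could instead verify the algebra laws by hand --- the unit law amounts to $\bigcup\dnarw\{U\}=U$ for $U\in\PO(X)$, and the associativity law to the agreement of $\bigcup\circ\mu^\PL_{\PO(X)}$ with $\bigcup\circ\PL(\bigcup)$ on $\PL(\PL(\PO(X)))$ --- but routing through Schalk's characterization is cleaner, since it absorbs the interaction between the monad multiplication and the closure operations into the already-established continuity of $\bigcup$.
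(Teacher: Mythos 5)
Your proposal is correct and follows essentially the same route as the paper, which likewise derives the corollary directly from Schalk's characterization of the $\PL$-algebras together with the continuity of $\bigcup\colon\PL(\PO(X))\to\PO(X)$ established in the preceding corollary and the sobriety of $\PO(X)$ from Corollary~\ref{cor:OX_sober}. You merely spell out the bookkeeping (specialization order is inclusion, suprema in the frame $\PO(X)$ are unions) that the paper leaves implicit.
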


\section{\texorpdfstring{$\PU(\PO(X))$}{K(O(X))} and \texorpdfstring{$\PO(\PL(X))$}{O(A(X))} are homeomorphic}\label{sec:KO_OA} 

\begin{defi}\label{def:KO_OA}
For each quasi-Polish space $X$ define $\kooa_X\colon \PU(\PO(X))\to\PO(\PL(X))$ and $\oako_X\colon \PO(\PL(X)) \to \PU(\PO(X))$ as
\begin{eqnarray*}
\kooa_X(\frakK)&=&\bigcap_{U\in\frakK} \Diamond U,\\
\oako_X(\calU) &=& \{U \in\PO(X) \mid \calU \subseteq \Diamond U\}.
\end{eqnarray*}
\qed%
\end{defi}
We will usually omit the subscripts from $\kooa_X$ and $\oako_X$ when the space is clear from the context.


\begin{lem}\label{lem:KO_OAcontinuous}
The function $\kooa$ is well-defined and continuous for every quasi-Polish space $X$.
\end{lem}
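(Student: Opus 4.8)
The plan is to establish well-definedness and continuity in that order, relying throughout on the earlier results for $\PL$, $\PU$, and $\PO$.

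For well-definedness I would rewrite $\kooa(\frakK)$ in terms of the map $\triangleup\colon\PL(X)\to\PO(\PO(X))$, $A\mapsto\triangleup A$. For $A\in\PL(X)$ and $U\in\PO(X)$ we have $A\in\Diamond U$ if and only if $U\in\triangleup A$, so
\[
    \kooa(\frakK)=\bigcap_{U\in\frakK}\Diamond U=\{A\in\PL(X)\mid \frakK\subseteq\triangleup A\}=\triangleup^{-1}(\triangledown\frakK),
\]
where $\triangledown\frakK=\{\calH\in\PO(\PO(X))\mid \frakK\subseteq\calH\}$ is the Scott-open filter in $\PO(\PO(X))$ determined by the compact saturated set $\frakK\in\PU(\PO(X))$. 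Since $X$ is quasi-Polish, hence countably based, the map $\triangleup$ is continuous, so $\triangleup^{-1}(\triangledown\frakK)$ is open in $\PL(X)$. Thus $\kooa(\frakK)\in\PO(\PL(X))$, which is exactly what well-definedness requires.

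For continuity I would first note that $\PL(X)$ is quasi-Polish by Theorem~\ref{thrm:lowerpowerspace}, hence consonant, so the sets $\triangledown\calK=\{\calV\in\PO(\PL(X))\mid\calK\subseteq\calV\}$, for $\calK\in\PU(\PL(X))$, form a (sub)basis for the Scott topology on $\PO(\PL(X))$. It therefore suffices to show $\kooa^{-1}(\triangledown\calK)$ is open for each such $\calK$, and the key computation is
\begin{align*}
\frakK\in\kooa^{-1}(\triangledown\calK)
&\iff \calK\subseteq\textstyle\bigcap_{U\in\frakK}\Diamond U\\
&\iff (\forall U\in\frakK)\;\calK\in\Box\Diamond U\\
&\iff (\forall U\in\frakK)\;U\in\kaoo(\calK)\\
&\iff \frakK\in\Box\kaoo(\calK),
\end{align*}
where the third equivalence is just the definition of $\kaoo$. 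Since $\kaoo(\calK)\in\PO(\PO(X))$ by Lemma~\ref{lem:KAX->OOX}, the set $\Box\kaoo(\calK)$ is a subbasic upper Vietoris open subset of $\PU(\PO(X))$, so $\kooa^{-1}(\triangledown\calK)=\Box\kaoo(\calK)$ is open. This also agrees with the formula recorded in Table~\ref{tab:summary}.

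The step I expect to be the main obstacle is the openness of $\kooa(\frakK)$ itself: a priori $\bigcap_{U\in\frakK}\Diamond U$ is an arbitrary (not finite) intersection of subbasic open sets and need not be open. What saves it is the compactness and saturation of $\frakK$, which is precisely what makes $\triangledown\frakK$ \emph{Scott-open} and lets the continuity of $\triangleup$ transfer openness back to $\PL(X)$; identifying this reformulation is the crucial insight. The continuity half is then essentially bookkeeping, provided one correctly invokes consonance of $\PL(X)$ to reduce to the subbasic sets $\triangledown\calK$ rather than attempting to verify Scott-continuity against the full Scott topology directly.
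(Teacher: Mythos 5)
Your proof is correct. The continuity half is essentially identical to the paper's: both invoke consonance of $\PL(X)$ to reduce to the subbasic opens $\triangledown\calK$ for $\calK\in\PU(\PL(X))$, and then establish $\kooa^{-1}(\triangledown\calK)=\Box\kaoo(\calK)$ by the same chain of equivalences. For well-definedness you take a somewhat different route: you factor $\kooa(\frakK)$ as $\triangleup^{-1}(\triangledown\frakK)$ and appeal to the Scott-openness of $\triangledown\frakK$ (from compactness of $\frakK$) together with the continuity of $\triangleup\colon\PL(X)\to\PO(\PO(X))$, which the paper asserts for countably based $X$ and which follows from the $\boxtimes U$ subbasis of Theorem~\ref{thrm:scott_top_basis_for_OOX}. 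The paper instead uses strong compactness of $\frakK$ (via consonance and co-consonance of $\PO(X)$, Propositions~\ref{prop:cntbsd_OX_cons_cocons} and~\ref{prop:coconsonant_strongcompact}) to extract a finite $\calF\subseteq\PO(X)$ with $\frakK\subseteq\uparw\calF\subseteq\triangleup A$, so that $A\in\bigcap_{U\in\calF}\Diamond U\subseteq\kooa(\frakK)$. The two arguments are close in substance --- both ultimately rest on the subbasis description of the Scott topology on $\PO(\PO(X))$ --- but yours trades the explicit finite subcover for a cleaner functorial factorization, at the cost of leaning on the continuity of $\triangleup$, which the paper states without proof but which is immediate from $\triangleup^{-1}(\boxtimes U)=\Diamond U$; the paper's version has the advantage of exhibiting an explicit basic open neighborhood of each point of $\kooa(\frakK)$.
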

\begin{proof}
We first show that $\kooa(\frakK)$ is an open subset of $\PL(X)$ for each $\frakK\in\PU(\PO(X))$. If $A \in \kooa(\frakK)$, then $A\cap U \not=\emptyset$ for each $U\in\frakK$, hence $\frakK\subseteq \triangleup A$. Propositions~\ref{prop:cntbsd_OX_cons_cocons} and~\ref{prop:coconsonant_strongcompact} together imply $\frakK$ is strongly compact, hence there is finite $\calF\subseteq\PO(X)$ such that $\frakK\subseteq\uparw\calF\subseteq\triangleup A$. Therefore, $A \in \bigcap_{U\in\calF}\Diamond U \subseteq\kooa(\frakK)$, which implies $\kooa(\frakK)$ is open.

Next we show that $\kooa$ is continuous. Since $\PL(X)$ is consonant, it suffices to show that $\kooa^{-1}(\triangledown\calK)$ is open for each $\calK\in\PU(\PL(X))$. For $\frakK\in\PU(\PO(X))$ we have
\begin{eqnarray*}
\kooa(\frakK) \in\triangledown\calK &\iff& \calK \subseteq \bigcap_{U\in\frakK} \Diamond U\\
&\iff& (\forall U\in\frakK)\, \calK\in\Box\Diamond U\\
&\iff& (\forall U\in\frakK)\, U \in \kaoo(\calK)\text{ (by definition of $\kaoo$)}\\
&\iff& \frakK \in \Box\kaoo(\calK),
\end{eqnarray*}
where $\kaoo\colon \PU(\PL(X)) \to \PO(\PO(X))$ is the homeomorphism from Definition~\ref{def:KAOO}.
\end{proof}

\begin{lem}\label{lem:upperset_in_PL}
Let $X$ be a topological space and $\calS \subseteq \PL(X)$ be an upper set. Then for each $A\in\PL(X)$ it holds that $A\in\calS$ if and only if $\calS \not\subseteq \Diamond(X\setminus A)$.
\end{lem}
\begin{proof}
Clearly, if $A\in\calS$ then $\calS \not\subseteq \Diamond(X\setminus A)$. Conversely, if $\calS \not\subseteq \Diamond(X\setminus A)$ then there must be $A'\in\calS$ such that $A' \not\in \Diamond(X\setminus A)$, which implies $A'\subseteq A$. Since $\calS$ is an upper set, we obtain $A\in\calS$.
\end{proof}

\begin{lem}\label{lem:OA_KOcontinuous}
The function $\oako$ is well-defined and continuous for every quasi-Polish space $X$.
\end{lem}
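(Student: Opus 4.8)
The plan is to reduce both claims to a single set-theoretic equivalence, namely that for every $\calU\in\PO(\PL(X))$ and every Scott-open $\calH\subseteq\PO(X)$,
\[
\oako(\calU)\subseteq\calH \iff \ooka(\calH)\subseteq\calU. \qquad (\star)
\]
Here $\ooka(\calH)\in\PU(\PL(X))$ is the compact saturated set supplied by Lemma~\ref{lem:OOX->KAX}. Once $(\star)$ is in hand, well-definedness and continuity both follow almost mechanically, so the real content is concentrated in establishing $(\star)$ and in extracting compactness from it.

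To prove $(\star)$ I would argue both directions separately. For the implication from right to left, I would assume $\ooka(\calH)\subseteq\calU$ and take $U\in\oako(\calU)$, i.e.\ $\calU\subseteq\Diamond U$; then $\ooka(\calH)\subseteq\calU\subseteq\Diamond U$, and Lemma~\ref{lem:OOX->KAX} (which says $U\in\calH$ iff $\ooka(\calH)\subseteq\Diamond U$) yields $U\in\calH$. For the converse I would assume $\oako(\calU)\subseteq\calH$ and take $A\in\ooka(\calH)$; if $A\notin\calU$ then, since $\calU$ is an upper set, Lemma~\ref{lem:upperset_in_PL} gives $\calU\subseteq\Diamond(X\setminus A)$, so $X\setminus A\in\oako(\calU)\subseteq\calH$, and then $A\in\ooka(\calH)=\bigcap_{V\in\calH}\Diamond V\subseteq\Diamond(X\setminus A)$ forces $A\cap(X\setminus A)\neq\emptyset$, a contradiction; hence $A\in\calU$.

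For well-definedness, saturation is routine: if $U\subseteq U'$ and $\calU\subseteq\Diamond U$, then $\calU\subseteq\Diamond U'$, so $\oako(\calU)$ is upward closed under inclusion and thus saturated in the Scott topology. The step I expect to be the main obstacle is compactness of $\oako(\calU)$ in $\PO(X)$, and I would handle it through directed covers. Given Scott-open sets $\calH_i$ with $\oako(\calU)\subseteq\bigcup_i\calH_i$, I would pass to finite unions so that $\{\calH_i\}$ is directed, set $\calH=\bigcup_i\calH_i$, and apply $(\star)$ to obtain $\ooka(\calH)\subseteq\calU$. Since $\ooka(\calH)=\bigcap_{U\in\calH}\Diamond U=\bigcap_i\ooka(\calH_i)$ and $\ooka$ reverses inclusions, $\{\ooka(\calH_i)\}$ is a down-directed family of compact saturated subsets of $\PL(X)$ whose intersection lies in the open set $\calU$. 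Because $\PL(X)$ is sober (Proposition~\ref{prop:lowerpowerspace_sober}) and hence well-filtered, some $\ooka(\calH_{i_0})\subseteq\calU$, and a final application of $(\star)$ gives $\oako(\calU)\subseteq\calH_{i_0}$, the required finite subcover. The only ingredient here beyond the earlier lemmas is that sober spaces are well-filtered (see~\cite{GL13}, or the Hofmann--Mislove theorem in~\cite{g03}).

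Continuity is then immediate: reading $(\star)$ off as a preimage computation gives $\oako^{-1}(\Box\calH)=\{\calU\mid\ooka(\calH)\subseteq\calU\}=\triangledown\ooka(\calH)$ for each Scott-open $\calH\subseteq\PO(X)$, and $\triangledown\ooka(\calH)$ is Scott-open in $\PO(\PL(X))$ precisely because $\ooka(\calH)$ is compact. Since the sets $\Box\calH$ generate the upper Vietoris topology on $\PU(\PO(X))$, this establishes continuity of $\oako$ and completes the argument.
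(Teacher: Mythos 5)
Your proof is correct, but the well-definedness half takes a genuinely different route from the paper's. The heart of your argument is the equivalence $(\star)$: $\oako(\calU)\subseteq\calH \iff \ooka(\calH)\subseteq\calU$, which is in fact exactly the chain of equivalences the paper uses to establish continuity (there it appears as $\oako^{-1}(\Box\calH)=\triangledown\ooka(\calH)$, proved via Lemmas~\ref{lem:OOX->KAX} and~\ref{lem:upperset_in_PL}); so on the continuity side the two proofs coincide. Where you diverge is compactness of $\oako(\calU)$. The paper uses consonance of $X$ to reduce to covers by sets of the form $\triangledown K$, passes to the closure $\calA$ of the covering family in $\PU(X)$, applies the homeomorphism $\akka$ to get a compact $\calK=\akka(\calA)\subseteq\calU$, and then explicitly extracts finitely many $K_0,\dots,K_n$ from the cover with $\oako(\calU)\subseteq\triangledown K_0\cup\cdots\cup\triangledown K_n$. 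You instead recognize $(\star)$ as a Galois-connection-like statement and derive compactness from the well-filteredness of the sober space $\PL(X)$ (Hofmann--Mislove): a directed cover of $\oako(\calU)$ yields a filtered family $\ooka(\calH_i)$ of compact saturated sets whose intersection lies in the open set $\calU$, forcing some single $\ooka(\calH_{i_0})\subseteq\calU$ and hence $\oako(\calU)\subseteq\calH_{i_0}$. Your route is shorter, avoids consonance, co-consonance, and the map $\akka$ entirely, and in fact never uses quasi-Polishness of $X$ at all (Lemmas~\ref{lem:OOX->KAX}, \ref{lem:upperset_in_PL} and Proposition~\ref{prop:lowerpowerspace_sober} hold for arbitrary spaces), so it establishes the lemma in greater generality; the price is importing the standard but external fact that sober spaces are well-filtered. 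The paper's argument is more self-contained relative to its own toolkit and produces the finite subcover concretely.
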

\begin{proof}
In order to prove that $\oako$ is well-defined we must show that $\oako(\calU)\in\PU(\PO(X))$ for each $\calU\in\PO(\PL(X))$. It is easy to see that $\oako(\calU)$ is an upper set, so we only need to show that $\oako(\calU)$ is compact.

Since $X$ is consonant, we only need to consider coverings of $\oako(\calU)$ by basic opens of the form $\triangledown K$ with $K\in\PU(X)$. So assume $\calI\subseteq \PU(X)$ is such that $\oako(\calU) \subseteq \bigcup_{K\in \calI} \triangledown K$. Define $\calA\in\PL(\PU(X))$ to be the closure of $\calI$ in $\PU(X)$. Set $\calK = \akka(\calA)$, where $\akka\colon\PL(\PU(X)) \to \PU(\PL(X))$ is the homeomorphism from Definition~\ref{def:AK_KA}.

We show that $\calK \subseteq \calU$. If $A\in\calK$ then $A\cap K\not=\emptyset$ for each $K\in \calI$, hence $(X\setminus A) \not\in \bigcup_{K\in \calI} \triangledown K$, which implies $(X\setminus A) \not\in \oako(\calU)$. Therefore, $A\in\calU$ by Lemma~\ref{lem:upperset_in_PL} and the definition of $\oako$.

Using the same methods as in the proof of Lemma~\ref{lem:subbasisForAK_KA}, it is easy to see that the basic opens of the form $\bigcap_{U\in\calF}\Diamond U$ for finite $\calF\subseteq \PO(X)$ form a basis for $\PL(X)$ that is closed under finite unions. Therefore, by compactness of $\calK$ there must be $U_0,\ldots,U_n\in\PO(X)$ such that $\calK \subseteq \Diamond U_0\cap \cdots \cap \Diamond U_n \subseteq \calU$. Then for $0\leq i\leq n$, we have $\akka(\calA) = \calK \in \Box\Diamond U_i$, hence $\calA \in \Diamond\Box U_i$ by Lemma~\ref{lem:akka_correct}. From the definition of $\Diamond$ there exist $K_i \in\calA\cap \Box U_i$ for $0\leq i\leq n$, and as $\calA$ is the closure of $\calI$, each $K_i$ can be chosen from $\calI$.

Now we show that $\oako(\calU)\subseteq \triangledown K_0 \cup \cdots \cup \triangledown K_n$. If $U\in\PO(X)$ and $\calU \subseteq \Diamond U$ then $\Diamond U_0 \cap \cdots \cap\Diamond U_n \subseteq \Diamond U$, and hence $\Box\Diamond U_0 \cap \cdots \cap\Box\Diamond U_n \subseteq \Box\Diamond U$ because $\Box$ is monotonic and distributes over finite intersections. Lemma~\ref{lem:akka_correct} yields
\begin{align*}
\Diamond\Box U_0 \cap \cdots \cap\Diamond\Box U_n
    &= \akka^{-1}(\Box\Diamond U_0) \cap \cdots \cap\akka^{-1}(\Box\Diamond U_n) \\
    &= \akka^{-1}(\Box\Diamond U_0 \cap \cdots \cap\Box\Diamond U_n)\subseteq \akka^{-1}(\Box\Diamond U) \\
    &= \Diamond\Box U.
\end{align*}
Since $K_i \in \Box U_i$ we obtain $\dnarw \{K_0,\ldots,K_n\} \in \Diamond \Box U$, hence $K_i \subseteq U$ for some $i\leq n$. Therefore, $U \in \triangledown K_0 \cup \cdots \cup \triangledown K_n$. It follows that $\oako(\calU)$ is compact, hence $\oako$ is well-defined.

Finally, we show that $\oako$ is continuous. Fix $\calH\in\PO(\PO(X))$. Then for $\calU\in\PO(\PL(X))$ we have
\begin{eqnarray*}
\oako(\calU) \in \Box\calH &\iff& (\forall U\in\PO(X))[\,U\in\oako(\calU) \Rightarrow U\in\calH\,]\\
&\iff& (\forall U\in\PO(X))[\,\calU\subseteq \Diamond U \Rightarrow \ooka(\calH)\in\Box\Diamond U\,]\text{ (by Lemma~\ref{lem:OOX->KAX})}\\
&\iff& (\forall U\in\PO(X))[\,\ooka(\calH)\not\subseteq\Diamond U \Rightarrow \calU\not\subseteq \Diamond U \,]\\
&\iff&(\forall A\in\PL(X))[\,\ooka(\calH)\not\subseteq\Diamond (X\setminus A) \Rightarrow \calU\not\subseteq\Diamond (X\setminus A) \,]\\
&\iff&(\forall A\in\PL(X))[\,A\in\ooka(\calH) \Rightarrow A \in\calU\,] \text{ (by Lemma~\ref{lem:upperset_in_PL})}\\
&\iff& \ooka(\calH) \subseteq \calU\\
&\iff& \calU \in \triangledown\ooka(\calH),
\end{eqnarray*}
where $\ooka\colon \PO(\PO(X)) \to \PU(\PL(X))$ is the homeomorphism from Definition~\ref{def:KAOO}.
\end{proof}

\begin{thm}\label{thrm:KO_OA_homeomorphic}
If $X$ is quasi-Polish then $\PU(\PO(X))$ with the upper Vietoris topology is homeomorphic (via $\kooa$ and its inverse $\oako$) with $\PO(\PL(X))$ with the Scott topology.
\end{thm}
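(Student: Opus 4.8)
The plan is to follow the same template used for Theorems~\ref{thrm:AO_OK_homeomorphic} and~\ref{thrm:cons_AK_KA_homeo}: having already established in Lemmas~\ref{lem:KO_OAcontinuous} and~\ref{lem:OA_KOcontinuous} that $\kooa$ and $\oako$ are both well-defined and continuous, it only remains to verify that $\oako\circ\kooa$ and $\kooa\circ\oako$ are the respective identity maps. Once both composites are shown to be the identity, the two continuous maps are mutually inverse homeomorphisms, and the coincidence of the upper Vietoris and Scott topologies on $\PU(\PO(X))$ follows since $\PO(X)$ is sober by Corollary~\ref{cor:OX_sober} (so Proposition~\ref{prop:UpperViet=Scott} applies, using that $\PO(X)$ is countably based when $X$ is locally compact, or more carefully that the upper Vietoris topology is what we carry throughout).

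First I would show $\kooa\circ\oako$ is the identity on $\PO(\PL(X))$. The cleanest route is to test membership against the subbasic opens $\triangledown\calK$ of $\PU(\PO(X))$ and the subbasic opens $\triangledown A$ (equivalently $\Box\Diamond$-type generators) of the relevant spaces, chaining the equivalences already extracted inside the two preceding lemmas. Concretely, for $\calU\in\PO(\PL(X))$ and a generating open, the computation in Lemma~\ref{lem:OA_KOcontinuous} gives $\oako(\calU)\in\Box\calH \iff \ooka(\calH)\subseteq\calU$, and the computation in Lemma~\ref{lem:KO_OAcontinuous} gives $\kooa(\frakK)\in\triangledown\calK \iff \frakK\in\Box\kaoo(\calK)$. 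Composing these and invoking that $\kaoo$ and $\ooka$ are mutually inverse (Theorem~\ref{thrm:KA_OO_iso}) should collapse the chain so that $\kooa(\oako(\calU))$ and $\calU$ lie in exactly the same generating opens, hence are equal.

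Symmetrically, for $\oako\circ\kooa$ on $\PU(\PO(X))$, I would fix $\frakK\in\PU(\PO(X))$ and $\calH\in\PO(\PO(X))$ and run the equivalences in the other order, again reducing to the fact that $\kaoo$ and $\ooka$ are inverse (Theorem~\ref{thrm:KA_OO_iso}) together with the defining biconditional $U\in\calH \iff \ooka(\calH)\in\Box\Diamond U$ from Lemma~\ref{lem:OOX->KAX}. The main obstacle I anticipate is purely bookkeeping rather than conceptual: the two maps are defined through different intermediaries ($\kooa$ routes through $\kaoo$, while $\oako$ routes through $\ooka$), so one must align the direction of each equivalence and keep straight which ambient space each $\Diamond$, $\Box$, $\triangledown$, or $\boxtimes$ lives in. The genuinely nontrivial inputs — that $\oako(\calU)$ is compact (resting on strong compactness of compact subsets of $\PO(X)$, via Propositions~\ref{prop:cntbsd_OX_cons_cocons} and~\ref{prop:coconsonant_strongcompact}) and that $\kooa(\frakK)$ is open — have already been discharged in the lemmas, so here I expect only to transcribe and compose those biconditionals carefully, with no new estimate required.
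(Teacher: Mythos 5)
Your proposal is correct and follows essentially the same route as the paper: the published proof likewise reduces everything to showing both composites are identities by chaining the biconditionals $\kooa(\frakK)\in\triangledown\calK \iff \frakK\in\Box\kaoo(\calK)$ and $\oako(\calU)\in\Box\calH \iff \calU\in\triangledown\ooka(\calH)$ extracted from Lemmas~\ref{lem:KO_OAcontinuous} and~\ref{lem:OA_KOcontinuous}, and then collapsing via Theorem~\ref{thrm:KA_OO_iso}. One minor remark: no appeal to Proposition~\ref{prop:UpperViet=Scott} is needed, since ``the Scott topology'' in the statement refers only to the definition of $\PO(\PL(X))$ and not to any claim that the upper Vietoris and Scott topologies agree on $\PU(\PO(X))$ (where $\PO(X)$ need not be countably based).
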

\begin{proof}
From the previous lemmas, it suffices to show that $\oako\circ \kooa$ and $\kooa \circ \oako$ are the identity functions.

First, for $\calU \in\PO(\PL(X))$ and $\calK\in\PU(\PL(X))$, from the proofs of Lemmas~\ref{lem:KO_OAcontinuous} and~\ref{lem:OA_KOcontinuous} we have
\begin{eqnarray*}
\kooa(\oako(\calU))\in \triangledown \calK &\iff& \oako(\calU) \in \Box\kaoo(\calK)\\
&\iff& \calU \in \triangledown\ooka(\kaoo(\calK))\\
&\iff& \calU \in \triangledown\calK\text{ (by Theorem~\ref{thrm:KA_OO_iso})},
\end{eqnarray*}
hence $\kooa\circ \oako$ is the identity on $\PO(\PL(X))$.

Similarly, for $\frakK \in \PU(\PO(X))$ and $\calH\in\PO(\PO(X))$ we have
\begin{eqnarray*}
\oako(\kooa(\frakK)) \in\Box\calH &\iff& \kooa(\frakK) \in \triangledown\ooka(\calH)\\
&\iff& \frakK \in \Box\kaoo(\ooka(\calH))\\
&\iff& \frakK \in \Box\calH\text{ (by Theorem~\ref{thrm:KA_OO_iso})},
\end{eqnarray*}
hence $\oako\circ\kooa$ is the identity on $\PU(\PO(X))$.
\end{proof}

The homeomorphisms $\kooa$ and $\oako$ determine natural isomorphisms between the contravariant functors $\PU\circ\PO$ and $\PO\circ \PL$ when restricted to quasi-Polish spaces. The proof is essentially the same as the proof for $\aook$ and $\okao$ in Section~\ref{sec:AO_OK}. To prove that $\oako_X \circ \PO(\PL(f)) = \PU(\PO(f)) \circ \oako_Y$ for every continuous $f\colon X\to Y$, we first show that both functions in the equation are frame homomorphisms. The non-trivial part of this claim is showing that $\PU(\PO(f))$ preserves binary joins.\footnote{We will give a direct topological proof here, but alternatively one could use Beck distributivity to show that $\PU(\PO(f))$ is a morphism of $\PL$-algebras, and then use Schalk's results (see Section~6.3 of~\cite{schalk:phd}) to show that $\PU(\PO(f))$ preserves joins.} We get an explicit description of binary joins in $\PU(\PO(X))$ by using the order isomorphism $\kooa$ to pass to $\PO(\PL(X))$, where joins are unions, and then pull the result back to $\PU(\PO(X))$ via the order isomorphism $\oako$:
\begin{eqnarray*}
\frakK_1 \vee \frakK_2 &=& \oako(\kooa(\frakK_1) \cup \kooa(\frakK_2))\\
 &=& \{U \in\PO(X) \mid \kooa(\frakK_1) \cup \kooa(\frakK_2) \subseteq \Diamond U\}\\
 &=& \{U \in\PO(X) \mid \kooa(\frakK_1) \subseteq \Diamond U \text{ \& } \kooa(\frakK_2) \subseteq \Diamond U\}\\
 &=& \{U \in\PO(X) \mid \kooa(\frakK_1) \subseteq \Diamond U\} \cap \{U \in\PO(X) \mid \kooa(\frakK_2) \subseteq \Diamond U\}\\
 &=& \oako(\kooa(\frakK_1)) \cap \oako(\kooa(\frakK_2))\\
 &=& \frakK_1 \cap \frakK_2.
\end{eqnarray*}
Thus binary joins in $\PU(\PO(X))$ correspond to intersections, and similarly for $\PU(\PO(Y))$. The rest of the proof that $\PU(\PO(f))$ preserves binary joins is dual to the proof that $\PL(\PO(f))$ preserves binary meets, and this time uses the fact that saturated compact sets are upper sets and that $\PO(f)$ preserves binary joins:
\begin{eqnarray*}
\PU(\PO(f))(\frakK_1 \cap \frakK_2) &=& \uparw\{ \PO(f)(U) \mid U \in \frakK_1 \cap \frakK_2 \}\\
  &=& \uparw\{ \PO(f)(U_1 \cup U_2) \mid U_1 \in \frakK_1 \text{ \& } U_2 \in \frakK_2 \}\\
  &=& \uparw\{ \PO(f)(U_1) \cup \PO(f)(U_2) \mid U_1 \in \frakK_1 \text{ \& } U_2 \in \frakK_2 \}\\
  &=& \uparw\{ \PO(f)(U_1) \mid U_1 \in \frakK_1 \} \cap \uparw\{ \PO(f)(U_2) \mid U_2 \in \frakK_2 \}\\
  &=& \PU(\PO(f))(\frakK_1) \cap \PU(\PO(f))(\frakK_2).
\end{eqnarray*}
So to prove that the frame homomorphisms $\oako_X \circ \PO(\PL(f))$ and $\PU(\PO(f)) \circ \oako_Y$ are equal, it suffices to show that they both map each subbasic open $\Diamond U \in \PO(\PL(Y))$ to $\uparw\{f^{-1}(U)\}\in \PU(\PO(X))$. Indeed, $\PO(\PL(f))(\Diamond U) = {\PL(f)}^{-1}(\Diamond U) = \Diamond f^{-1}(U)$, which $\oako_X$ then maps to ${\{W\in\PO(X)\mid \Diamond f^{-1}(U) \subseteq \Diamond W\} = \uparw\{f^{-1}(U)\}}$. Going in the other direction, $\oako_Y(\Diamond U) = \{ W \in \PO(Y) \mid \Diamond U \subseteq \Diamond W\} = \uparw\{U\}$, and $\PU(\PO(f))(\uparw\{U\})$ equals the saturation of $\{\PO(f)(W) \mid U \subseteq W\}$, which is just $\uparw\{f^{-1}(U)\}$.

\begin{cor}\label{cor:compact_intersections}
If $X$ is quasi-Polish then the function $\bigcap\colon \PU(\PO(X)) \to \PO(X)$, which maps a compact subset of opens to their set-theoretical intersection, is well-defined and continuous.
\end{cor}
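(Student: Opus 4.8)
The plan is to mimic the proof of the corresponding statement for $\bigcup\colon \PL(\PO(X)) \to \PO(X)$ given earlier in Section~\ref{sec:AO_OK}, but now dualized: I would use the homeomorphism $\kooa$ from Theorem~\ref{thrm:KO_OA_homeomorphic} in place of $\aook$, and the unit of the \emph{lower} powerspace monad in place of the unit of the upper powerspace monad. The idea is to exhibit $\bigcap$ as a composition of two continuous maps, which simultaneously establishes well-definedness (i.e.\ that the intersection is actually open) and continuity.

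First I would recall the unit $\eta^\PL_X\colon X\to \PL(X)$, $x\mapsto \dnarw\{x\}$, and note that its preimage map ${(\eta^\PL_X)}^{-1}\colon \PO(\PL(X)) \to \PO(X)$ is a frame homomorphism and hence Scott-continuous. I would also recall the identity ${(\eta^\PL_X)}^{-1}(\Diamond U) = U$ for every open $U\subseteq X$, noted in Section~\ref{sec:lowerpowerspace}. Then, using Definition~\ref{def:KO_OA} together with the fact that preimages commute with arbitrary intersections, I would compute for each $\frakK\in\PU(\PO(X))$:
\[
{(\eta^\PL_X)}^{-1}(\kooa(\frakK)) = {(\eta^\PL_X)}^{-1}\Bigl(\bigcap_{U\in\frakK}\Diamond U\Bigr) = \bigcap_{U\in\frakK}{(\eta^\PL_X)}^{-1}(\Diamond U) = \bigcap_{U\in\frakK} U.
\]
Since $\kooa(\frakK)$ is an open subset of $\PL(X)$ and $\eta^\PL_X$ is continuous, the left-hand side is open in $X$; this shows that $\bigcap_{U\in\frakK} U$ is open, so $\bigcap$ is well-defined as a map into $\PO(X)$.

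Finally, the displayed equation shows that $\bigcap = {(\eta^\PL_X)}^{-1}\circ \kooa$ as functions $\PU(\PO(X))\to \PO(X)$. As $\kooa$ is continuous by Theorem~\ref{thrm:KO_OA_homeomorphic} and ${(\eta^\PL_X)}^{-1}$ is Scott-continuous, the composition is continuous, completing the proof. I do not anticipate any genuine obstacle here: the substantial work has already been carried out in establishing the homeomorphism $\PU(\PO(X))\cong\PO(\PL(X))$, and the only insight needed is to recognize the correct factorization through the unit $\eta^\PL_X$ and to use $\kooa(\frakK)=\bigcap_{U\in\frakK}\Diamond U$ so that the preimage collapses each $\Diamond U$ back to $U$.
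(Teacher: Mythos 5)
Your proposal is correct and coincides with the paper's own proof: both exhibit $\bigcap$ as the composition ${(\eta^\PL_X)}^{-1}\circ\kooa$, using that ${(\eta^\PL_X)}^{-1}$ is a frame homomorphism (hence Scott-continuous) and that $\kooa(\frakK)=\bigcap_{U\in\frakK}\Diamond U$ with ${(\eta^\PL_X)}^{-1}(\Diamond U)=U$. The only cosmetic difference is that the paper verifies the identity pointwise while you pass preimages through the intersection; these are the same computation.
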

\begin{proof}
Let $\eta^\PL_X\colon X\to\PL(X)$ be the unit for the monad $\PL$. The function ${(\eta^\PL_X)}^{-1} \colon \PO(\PL(X)) \to\PO(X)$, which maps each open $\calU\subseteq \PL(X)$ to its preimage under $\eta^\PL_X$, is a frame homomorphism and therefore Scott-continuous. Given $\frakK\in\PU(\PO(X))$ we have
\begin{eqnarray*}
x \in {(\eta^\PL_X)}^{-1}(\kooa(\frakK)) &\iff& \eta^\PL_X(x) \in \kooa(\frakK)\\
&\iff& \eta^\PL_X(x) \in \bigcap_{U\in\frakK} \Diamond U\\
&\iff& x \in \bigcap_{U\in\frakK} U.
\end{eqnarray*}
Therefore, $\bigcap = {(\eta^\PL_X)}^{-1} \circ \kooa$ is well-defined and continuous.
\end{proof}

It is also interesting to note that the composition $\gamma\circ \eta^\PU_{\PO(X)}$ is the continuous function which maps $U\in\PO(X)$ to $\Diamond U \in\PO(\PL(X))$.

We obtain the following corollary from A.~Schalk's characterization of the algebras of the upper powerspace monad  (see Section~7.3 in~\cite{schalk:phd}).

\begin{cor}
If $X$ is quasi-Polish, then $\PO(X)$ is an algebra of the upper powerspace monad $\PU$, with set-theoretical intersection as the structure map.
\qed%
\end{cor}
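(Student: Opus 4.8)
The plan is to produce the Eilenberg--Moore structure explicitly. Corollary~\ref{cor:compact_intersections} already gives a continuous candidate structure map
\[
\xi = {\bigcap}\colon \PU(\PO(X)) \to \PO(X),\qquad \xi(\frakK) = \bigcap_{U\in\frakK} U ,
\]
so, since an algebra is by definition a space together with a continuous structure map satisfying the unit and associativity laws, it suffices to check $\xi\circ\eta^\PU_{\PO(X)} = \mathrm{id}_{\PO(X)}$ and $\xi\circ\mu^\PU_{\PO(X)} = \xi\circ\PU(\xi)$. This is precisely the data that A.~Schalk's characterization of $\PU$-algebras (Section~7.3 of~\cite{schalk:phd}) asks one to exhibit, and the verification below is what that characterization reduces the claim to; $\PO(X)$ is moreover sober by Corollary~\ref{cor:OX_sober}, so the $T_0$ hypothesis is met.

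First I would verify the unit law. As $\PO(X)$ is ordered by inclusion, $\eta^\PU_{\PO(X)}(U) = \uparw\{U\} = \{W\in\PO(X)\mid U\subseteq W\}$, whose least element is $U$, so $\xi(\eta^\PU_{\PO(X)}(U)) = \bigcap\{W\in\PO(X)\mid U\subseteq W\} = U$. Next I would check associativity by a direct set-theoretic computation. Fix $\frakU\in\PU(\PU(\PO(X)))$. On the one hand $\mu^\PU_{\PO(X)}(\frakU) = \bigcup_{\frakK\in\frakU}\frakK$, whence
\[
\xi\bigl(\mu^\PU_{\PO(X)}(\frakU)\bigr) = \bigcap\{\,U\in\PO(X) \mid (\exists\frakK\in\frakU)\, U\in\frakK\,\} = \bigcap_{\frakK\in\frakU}\ \bigcap_{U\in\frakK} U .
\]
On the other hand $\PU(\xi)(\frakU) = \uparw\{\xi(\frakK)\mid\frakK\in\frakU\}$, and using that $\bigcap(\uparw S) = \bigcap S$ for any $S\subseteq\PO(X)$ (every member of $\uparw S$ contains a member of $S$, and $S\subseteq\uparw S$) we obtain
\[
\xi\bigl(\PU(\xi)(\frakU)\bigr) = \bigcap\{\,\xi(\frakK) \mid \frakK\in\frakU\,\} = \bigcap_{\frakK\in\frakU}\ \bigcap_{U\in\frakK} U ,
\]
which agrees with the previous display. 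Hence $(\PO(X),\xi)$ is a $\PU$-algebra.

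I do not expect a genuine obstacle: both laws collapse to elementary identities about intersections, and all the topological substance --- that a compact saturated family of opens has an \emph{open} intersection, and that $\xi$ is continuous --- was already discharged in Corollary~\ref{cor:compact_intersections}. The only points needing mild care are the identity $\bigcap(\uparw S)=\bigcap S$ in the associativity step and the fact, supplied by the monad structure rather than reproved here, that $\mu^\PU_{\PO(X)}(\frakU)$ is again compact saturated. Matching this verification against the exact hypotheses of Schalk's characterization (that $\PO(X)$ is a complete lattice whose meets of compact saturated families are computed by the continuous map $\xi$) is then routine bookkeeping.
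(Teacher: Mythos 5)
Your verification is correct: once Corollary~\ref{cor:compact_intersections} supplies well-definedness and continuity of $\xi=\bigcap$, the unit law collapses to $\bigcap\uparw\{U\}=U$ and the associativity law to the identity $\bigcap(\uparw S)=\bigcap S$ applied to $S=\{\xi(\frakK)\mid\frakK\in\frakU\}$, both of which you check correctly. But the route differs from the paper's, which performs no computation at all: it deduces the corollary directly from Schalk's \emph{characterization} of the $\PU$-algebras (Section~7.3 of~\cite{schalk:phd}), which identifies them order-theoretically as sober spaces in which every compact saturated subset has an infimum computed by a continuous map. Corollary~\ref{cor:compact_intersections} (together with the sobriety of $\PO(X)$) verifies exactly that hypothesis, so the Eilenberg--Moore laws never need to be checked by hand. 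Your version trades the external citation for a short, self-contained verification of the two algebra axioms, which is arguably friendlier to a reader without Schalk's thesis; what it gives up is the extra information the characterization carries for free (for instance, the identification of the $\PU$-algebra morphisms, which the paper exploits in its naturality discussion). One small caveat: your framing of Schalk's result as ``asking one to exhibit'' the unit and associativity laws slightly misstates it --- the point of the characterization is precisely that one need not check those laws --- but since your direct verification stands on its own, nothing is lost.
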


\section*{Acknowledgments}
This paper benefited greatly from multiple discussions with Klaus Keimel, who unfortunately passed away while this paper was being reviewed. Klaus was a great mathematician, a kind friend, and an important source of encouragement. He will be deeply missed.

We thank Reinhold Heckmann for many helpful comments on an earlier version of this paper, and thank Matthias Schr\"{o}der for useful discussions, in particular for sharing his insights about the Scott-topology on the open set lattice of a sequential space. We also thank the reviewers for carefully reading the paper and providing useful suggestions for improvement. This work was supported by JSPS Core-to-Core Program, A. Advanced Research Networks, and the first author was supported by JSPS KAKENHI Grant Numbers 15K15940 and 18K11166.

\bibliographystyle{alpha}
\bibliography{Powerspaces}

\end{document}